\numberwithin{equation}{section}
\newcommand\F{\mathcal F}
\renewcommand{\P}{\mathbb{P}}
\newcommand{\im}{\textup{i}}
\newcommand\eps{\varepsilon}
\newcommand\wtilde{\widetilde}
\newcommand\p{\mathbb{P}}
\newcommand\E{\mathbb{E}}
\newcommand{\cp}{\overset{\mathbb{P}}{\longrightarrow}}
\newcommand{\st}{\overset{d_{st}}{\longrightarrow}}
\newcommand{\cd}{\overset{d}{\longrightarrow}}    
\newtheorem{theorem}{Theorem}[section]
\newtheorem{corollary}[theorem]{Corollary}
\newtheorem{lemma}[theorem]{Lemma}
\newtheorem{proposition}[theorem]{Proposition}
\theoremstyle{definition}
\newtheorem{example}[theorem]{Example}
\newtheorem{remark}[theorem]{Remark}
\newif\ifcol
\newcommand{\colorr}{\color[rgb]{0.8,0,0}}
\newcommand{\coloroy}{\color[rgb]{1,0.95,0}}
\newcommand{\colorr}{\color{black}}% {{\color[rgb]{0.8,0,0}}
\newcommand{\coloroy}{\color{black}}% {\color[rgb]{1,0.95,0}}
\def\tti{{\tt i}}
\def\tik{{t_{ik_n}}}
\def\tlk{{t_{lk_n}}}
\def\tlkp{{t_{(l+1)k_n}}}
\def\sfh{{\sf h}}
\def\bd{\begin{description}}
\def\ed{\end{description}}
\def\koko{{\coloroy{koko}}}
\def\halflineskip{\vspace*{3mm}}
\def\nn{\nonumber}
\def\be{\begin{equation}}
\def\ee{\end{equation}}
\def\bea{\begin{eqnarray}}
\def\eea{\end{eqnarray}}
\def\beas{\begin{eqnarray*}}
\def\eeas{\end{eqnarray*}}
\def\bi{\begin{itemize}}
\def\ei{\end{itemize}}
\def\bd{\begin{description}}
\def\ed{\end{description}}
\def\ep{\varepsilon}
\def\half{\frac{1}{2}}
\def\calc{{\cal C}}
\def\cale{{\cal E}}
\def\calf{{\cal F}}
\def\call{{\cal L}}
\newcommand{\bbD}{{\mathbb D}}
\newcommand{\bbE}{{\mathbb E}}
\newcommand{\bbF}{{\mathbb F}}
\newcommand{\bbG}{{\mathbb G}}
\newcommand{\bbH}{{\mathbb H}}
\newcommand{\bbN}{{\mathbb N}}
\newcommand{\bbP}{{\mathbb P}}
\newcommand{\bbR}{{\mathbb R}}
\newcommand{\bbZ}{{\mathbb Z}}
\begin{document}
\title{Edgeworth expansion for the pre-averaging estimator\footnote{
Mark Podolskij and Bezirgen Veliyev gratefully acknowledge financial support 
from CREATES funded by the Danish National Research Foundation (DNRF78). 
The research of Nakahiro Yoshida is supported by 
Japan Society for the Promotion of Science Grants-in-Aid for Scientific Research Nos. 24340015 
(Scientific Research), 
Nos. 24650148 and 26540011 (Challenging Exploratory Research); 
CREST Japan Science and Technology Agency; 
NS Solutions Corporation; 
and by a Cooperative Research Program of the Institute of Statistical Mathematics.
}
}
\author{Mark Podolskij\thanks{Department of Mathematics, Aarhus University,  Ny Munkegade 118, 8000 Aarhus C,
Denmark, Email: mpodolskij@math.au.dk} 
\and Bezirgen Veliyev\thanks{CREATES, Department of Economics and Business Economics, Aarhus University,  Fuglesangs Alle 4, 8210 Aarhus V,
Denmark, Email: bveliyev@econ.au.dk} \thanks{corresponding author}
\and Nakahiro Yoshida\thanks{Graduate School of Mathematical Science, 
The University of Tokyo, 
3-8-1 Komaba, Meguro-ku, Tokyo 153, Japan, Email: nakahiro@ms.u-tokyo.ac.jp} \thanks{CREST Japan Science and Technology Agency}
}

\maketitle

\begin{abstract}
In this paper, we study the Edgeworth expansion for a pre-averaging estimator of quadratic variation
in the framework of continuous diffusion models observed with noise. More specifically, we obtain 
a second order expansion for the joint density of the estimators of quadratic variation and its asymptotic
variance. Our approach is based on martingale embedding, Malliavin calculus and stable central limit theorems
for continuous diffusions. Moreover, we derive the density expansion for the studentized statistic,
which might be applied to construct asymptotic confidence regions.

\ \

{\it Keywords}: \
diffusion processes, Edgeworth expansion, high frequency observations, quadratic variation, pre-averaging.\bigskip

{\it AMS 2000 subject classifications.} Primary ~62M09, ~60F05, ~62H12;
secondary ~62G20, ~60G44.

\end{abstract}

\section{Introduction}
In the last decade the estimation of quadratic variation of It\^o semimartingales have been investigated 
by many researchers. Typically, this estimation problem is considered in the infill asymptotics setting, i.e.
the underlying observations are recorded from high frequency data of continuous/discontinuous 
It\^o semimartingales, diffusion processes corrupted by noise or related models. A recent comprehensive monograph 
\cite{JP12}
presents a detailed asymptotic analysis for estimators of quadratic variation and related objects in various frameworks.

In financial mathematics, it is nowadays widely accepted that financial data is contaminated by 
{\it microstructure noise} such as rounding errors, bid-ask bounds and misprints, when observed at ultra high frequency. 
This fact prevents us from using classical realised volatility estimator at such frequencies. In this work
we consider a continuous SDE model corrupted by additive i.i.d. noise, i.e. the observations are
\[
Y_{t_i} = X_{t_i} + \eps_{t_i},
\] 
where $X$ is a continuous diffusion process, $\eps$ is an i.i.d process independent of $X$ and $t_i=i\Delta_n$
with $\Delta_n\rightarrow 0$. It is well-known that realised volatility has an explosive behaviour and more 
delicate methods are required to estimate the quadratic variation of the latent diffusion process $X$. The most
famous estimation approaches in this framework are the {\it multiscale approach} of \cite{Z06},
the {\it realised kernel method} proposed in \cite{BHLS08} and the {\it pre-averaging concept} originally introduced in \cite{POVE09}
among others. All these estimators are consistent, asymptotically mixed normal and have the convergence rate $\Delta_n^{-1/4}$,
which is known to be optimal. 

Due to this relatively slow rate of convergence the quality of the mixed normal approximation is rather questionable 
even at high frequencies. The aim of this paper is to derive the second order Edgeworth expansion for the pre-averaging 
estimator to improve the mixed normal approximation of the unknown density.
We remark that our work is related to a recent paper \cite{POYO13}, which investigates the Edgeworth expansion
for power variations of continuous diffusion processes in the noise-free setting. 
However, in the framework of continuous diffusions corrupted by additive i.i.d. noise the stochastic 
second order expansion of the pre-averaging statistics is more involved. 
Our methodology relies on martingale 
methods, stochastic expansion of the pre-averaging statistics and general theory of Edgeworth expansion associated
with mixed normal limits studied in \cite{YOSH13}. The latter approach is heavily using different aspects of Maliavin calculus,
such as integration by parts formula and conditions for smoothness of probability laws. 
In a second step, we will present the Edgeworth expansion for the density of the studentized statistic, which can be
potentially  used
to construct more precise confidence regions for the quadratic variation of the diffusion process $X$.

The paper is organised as follows. We describe the main setting and recall the pre-averaging approach in Section
\ref{sec:setting}. Section \ref{sec3} presents a second order stochastic decomposition for the pre-averaging estimator
of the quadratic variation. We demonstrate the general theory of Edgeworth expansion with respect to mixed normal 
limits in Section \ref{sec4}. In Section \ref{sec5} we apply the asymptotic theory to the pre-averaging estimator
and present Edgeworth expansion for the studentized version of the statistic. 
In Section 6, we deal separately  with the case of constant volatility, which does not satisfy our non-degeneracy condition. 
Section \ref{sec6} demonstrates an example 
and Section \ref{sec7} collects some steps of the proof.

\section{Setting} \label{sec:setting}
In this paper, we deal with infill asymptotics, i.e. the data is observed 
at equidistant grid  $i \Delta_n,$ $i \in \mathbb{N},$ over a finite horizon $[0,T]$ and 
$\Delta_n \to 0$. We also impose that $1/\Delta_n$ is a positive integer. The terminal time $T$ is assumed to be fixed and we assume $T=1$ without loss of generality.
For simplicity, we use the notation $t_{i}:=i \Delta_n.$
On a filtered probability space $(\Omega, \mathcal{F}, (\mathcal{F}_t)_{t \in [0,1]}, \mathbb{P})$ (to be specified in Section \ref{SLT}),  we consider a diffusion model that satisfies the stochastic differential equation 
\begin{align} \label{SDE}
dX_t=b^{[1]}(X_t) dW_t+b^{[2]}(X_t) dt
\end{align}
with a bounded random variable $X_0$ as starting value,  a standard Brownian motion $W$ and 
continuous functions $b^{[1]}, b^{[2]}:\mathbb{R} \to \mathbb{R}.$ 
We intentionally choose the  notations $b^{[1]}, b^{[2]}$ to emphasize that the diffusion term 
$b^{[1]}$ dominates the drift term $b^{[2]}$ in asymptotic expansions throughout the paper.
We are interested in estimating the integrated volatility which we denote by
\begin{equation}
V=\int_0^1 (b^{[1]}(X_t))^2 dt.
\end{equation}
However, due to the microstructure noise effects, we are not able to observe the process $X$ directly, but only with distortions. 
More specifically, we assume that the underlying observations $(Y_{t_i})_{i\geq 0}$ are given by 
\begin{equation}
Y_{t_i}=X_{t_i}+ \eps_{t_i},
\end{equation}
where $(\eps_{t_i})_{i\geq  0}$ is a sequence of  i.i.d. random variables with
\begin{equation}
\E[\eps_{t_i}]=0 \qquad \text{and} \qquad \E[\eps_{t_i}^2]=\omega^2>0, 
\end{equation}
and $\eps_{t_i}$ is $\calf_{t_i}$-measurable.
In addition, we assume that the processes $\eps$ and $X$ are independent. Such additive noise models are widely
used in financial mathematics, see e.g. \cite{BHLS08,JLMP09,POVE09,ZHMA05} among many others.

We require some notation to describe the pre-averaging estimator which is originally due to \cite{JLMP09,POVE09}.
We pick a sequence of positive integers $(k_n)_{n=1}^{\infty}$ and a positive real number $\theta$  such that 
\begin{equation}\label{kn}
k_n\Delta_n^{1/2}=\theta +o(\Delta_n^{1/2})   \mbox{ and } 
d_n:=\bigg\lfloor\frac{1}{k_n \Delta_n}\bigg\rfloor \in \mathbb{N}.
\end{equation}
Moreover, we consider a continuous, non-negative function $g$ on $[0,1]$ which is piecewise continuously differentiable with a piecewise Lipschitz derivative $g'.$ This function should also satisfy 
\begin{equation}
g(0)=g(1)=0 \quad
\mbox{ and } \int_0^1 g^2(s)ds>0.
\end{equation}
Furthermore, we introduce the following notations associated with $g$:
\begin{align*}
&h(j/k_n)=g((j+1)/k_n)-g(j/k_n), ~ \psi_1^n=k_n \sum_{j=0}^{k_n-1} (h(j/k_n))^2, \\
 &\psi_2^n=\frac{1}{k_n} \sum_{j=1}^{k_n-1} (g(j/k_n))^2, ~ \psi_3^n=\frac{1}{k_n} \sum_{j=1}^{k_n-1} g(j/k_n), \\
 &\psi_4^n=\frac{1}{(k_n)^2} \sum_{j=1}^{k_n-1} (g(j/k_n))^2 (j-1/2).
\end{align*}
Moreover, we need the notations written below. The first four are limits of the terms $\psi_i^n, 1 \leq i \leq 4.$
\begin{align*}
\psi_1&=\int_0^1  [g'(s)]^2 ds, ~ \psi_2=\int_0^1 g^2(s) ds, ~ \psi_3=\int_0^1 g(s) ds, ~ \psi_4=\int_0^1  s g^2(s) ds, \\
\psi_5&=\int_{0}^{1} \int_{0}^{u} g(s) ds g^2(u)du, ~ \psi_6=\int_{0}^{1} \left( \int_{0}^{u} g(s) ds g(u)-\int_0^u (g(s)-\psi_2) ds
 \right)^2 du \\
 \psi_7&=\int_{0}^{1} \int_{0}^{u} \int_0^s [2 g(s)+g(r)]^2 dr ds g^2(u) du.
\end{align*}
For any process $U$ we define the pre-averaged increment at stage $i \Delta_n$ via 
\begin{align*}
\widebar {U}_{t_{i}}=\sum_{j=1}^{k_n-1} g(j/k_n)\Delta_{i+j}^n U=\sum_{j=0}^{k_n-1} -h(j/k_n) U_{{t_{i+j}}},
\end{align*}
where $\Delta_i^n U =U_{t_{i}}-U_{t_{i-1}}$. 
Finally, we are ready to introduce the pre-averaging  estimator for the quadratic variation $V:$
\begin{equation} \label{PAEstimator}
V_n=\frac{1}{\psi_2^n} \sum_{i=0}^{d_n-1} (\widebar Y_{t_{i k_n}})^2- \frac{\psi_1^n d_n \Delta_n  }{2 \psi_2^n k_n} \sum_{i=1}^{ 1/\Delta_n } (\Delta_i^n Y)^2.
\end{equation}
We remark that $V_n$ is essentially the estimator proposed in \cite{JLMP09} with the difference that we only use
non-overlapping windows in this paper. This makes it easier to determine the dominating martingale 
$M_n$ of the estimator, which is required in Sections \ref{sec3} and \ref{sec4}, while computation of the martingale part of the original estimator investigated 
in \cite{JLMP09} is far from being obvious.
As we will see below, we need a consistent estimator of the asymptotic conditional 
variance associated with $V_n$, which is defined as
\begin{equation}\label{fn}
F_n=\frac{2 \Delta_n^{-1/2}}{3 (\psi_2^n)^2} \sum_{i=0}^{d_n-1} (\widebar Y_{t_{i k_n}})^4.
\end{equation}

We recall that a sequence of random variables $(Y_n)_{n \geq 1},$ which are defined on $(\Omega, \mathcal{F}, \mathbb{P})$
and take values in a metric space $E,$ is said to converge stably with the limit $Y,$ which is defined on an extension $(\overline{\Omega}, \overline{\mathcal{F}}, \overline{\p})$ of $(\Omega, \mathcal{F}, \p),$ if for any bounded, continuous function $f$ and any bounded
$\mathcal{F}$-measurable random variable $Z$ it holds that
\begin{equation}\label{eq:defstable}
\E[f(Y_n) Z] \to \overline \E[f(Y) Z], ~~~ n \to \infty.
\end{equation}
In short, we use the notation $Y_n \st Y.$ We say that $Y$ is mixed normal with random variance $Z^2$, and write $Y \sim MN(0,Z^2),$ 
if $Y \sim Z U, $ where $U \sim N(0,1),$ $Z>0$ and $U,$ $Z$ are independent. 

Denoting $Z_n=\Delta_n^{-1/4}(V_n-V),$ we proceed to the first asymptotic result whose proof essentially follows from the work 
of \cite{JLMP09,POVE09}. We provide a sketch of the proof in Section \ref{sec7}.
%\begin{theorem} \label{LLN}
%Suppose that $\E[(\eps_{t_1})^4]<\infty.$ Then, we get
%$$V_n \cp V.$$
%\end{theorem}

\begin{theorem} \label{CLT}
Under the condition $\E[(\eps_{t_1})^8]<\infty,$ we deduce that
$$Z_n \st M \sim MN(0,C) \mbox{ with } C=2 \theta \int_0^1  \left((b^{[1]}(X_t))^2+ \frac{\omega^2 \psi_1}{\theta^2 \psi_2} \right)^2 dt.
$$
Moreover, we obtain $$F_n \cp C.$$
\end{theorem}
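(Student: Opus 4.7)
The natural starting point is to decompose the observations via $Y = X + \eps$, so that $\ovl{Y}_{t_{ik_n}} = \ovl{X}_{t_{ik_n}} + \ovl{\eps}_{t_{ik_n}}$. Expanding the square in the definition of $V_n$ yields
\begin{equation*}
\psi_2^n V_n = \sum_{i=0}^{d_n-1}\ovl{X}_{t_{ik_n}}^2 + 2\sum_{i=0}^{d_n-1}\ovl{X}_{t_{ik_n}}\ovl{\eps}_{t_{ik_n}} + \sum_{i=0}^{d_n-1}\ovl{\eps}_{t_{ik_n}}^2 - \frac{\psi_1^n d_n \Delta_n}{2 k_n}\sum_{j=1}^{1/\Delta_n}(\Delta_j^n Y)^2.
\end{equation*}
A short computation using independence of $\eps$ and $X$ yields $\E[\ovl{\eps}_{t_{ik_n}}^2] = \omega^2\psi_1^n/k_n$ and $\E[(\Delta_j^n Y)^2\,|\,\calf_{t_{j-1}}] = 2\omega^2 + (b^{[1]}(X_{t_{j-1}}))^2\Delta_n + o_p(\Delta_n)$; the bias correction is calibrated so that its mean exactly cancels the mean of the noise-only block $\sum_i\ovl{\eps}_{t_{ik_n}}^2$. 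This leaves
\begin{equation*}
V_n - V = \frac{1}{\psi_2^n}\left(\sum_{i=0}^{d_n-1}\ovl{X}_{t_{ik_n}}^2 - \psi_2^n V\right) + R_n,
\end{equation*}
where $R_n$ collects zero-mean noise-noise, noise-diffusion cross, and bias-centering remainders.

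To establish the stable CLT, I would use the local approximation $\ovl{X}_{t_{ik_n}} \approx b^{[1]}(X_{t_{ik_n}})\ovl{W}_{t_{ik_n}}$ together with $\E[\ovl{W}_{t_{ik_n}}^2\,|\,\calf_{t_{ik_n}}] = k_n\Delta_n\psi_2^n \to \theta\psi_2$, and after centering by the conditional mean embed $\Delta_n^{-1/4}(V_n - V)$ into a discrete-time martingale array $M_n$ adapted to $(\calf_{t_{ik_n}})_i$. The stable martingale CLT (Theorem IX.7.28 of Jacod--Shiryaev, used exactly as in \cite{JLMP09,POVE09}) then reduces the convergence to: (i) $\sum_i\E[(\Delta_i M_n)^2\,|\,\calf_{t_{(i-1)k_n}}] \cp C$; (ii) a Lyapunov condition $\sum_i\E[(\Delta_i M_n)^4\,|\,\cdot] \cp 0$; and (iii) the orthogonality relations $\sum_i\E[\Delta_i M_n\,\Delta_i N\,|\,\cdot] \cp 0$ for $N = W$ and every bounded $\calf$-martingale $N$ orthogonal to $W$, which together yield stability with a mixed normal limit. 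The content of (i) is the combinatorics of three contributions: diffusion-diffusion produces $2\theta\int_0^1(b^{[1]}(X_t))^4\,dt$, noise-noise produces $2\theta\int_0^1(\omega^2\psi_1/(\theta^2\psi_2))^2\,dt$, and the cross terms produce $4\theta\int_0^1(b^{[1]}(X_t))^2\omega^2\psi_1/(\theta^2\psi_2)\,dt$, which add up to the square inside the integrand of $C$.

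For the second assertion, $F_n$ is a law-of-large-numbers statistic. Expanding $\ovl{Y}_{t_{ik_n}}^4 = (\ovl{X}_{t_{ik_n}} + \ovl{\eps}_{t_{ik_n}})^4$ by the binomial theorem, using asymptotic conditional Gaussianity of $\ovl{X}_{t_{ik_n}}$, independence of $\ovl{\eps}_{t_{ik_n}}$, and the Gaussian fourth-moment identity $\E[G^4] = 3(\E[G^2])^2$ applied to the mean-zero $G = \ovl{X}_{t_{ik_n}} + \ovl{\eps}_{t_{ik_n}}$, one obtains
\begin{equation*}
\E[\ovl{Y}_{t_{ik_n}}^4\,|\,\calf_{t_{ik_n}}] \approx 3\Delta_n\bigl(\theta\psi_2(b^{[1]}(X_{t_{ik_n}}))^2 + \omega^2\psi_1/\theta\bigr)^2.
\end{equation*}
Multiplying by $2\Delta_n^{-1/2}/(3(\psi_2^n)^2)$, summing the $d_n \sim (\theta\Delta_n^{1/2})^{-1}$ blocks, and invoking a Riemann sum argument yields $F_n \cp C$, while deviations of $\ovl{Y}^4$ from its conditional mean are controlled in $L^2$ using $\E[\eps^8] < \infty$ to bound fourth powers of the noise. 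The main obstacle, common to all pre-averaging asymptotics, is the careful bookkeeping of cross-block and within-block remainder terms, handled by repeated Burkholder--Davis--Gundy estimates after a localisation making $b^{[1]}$ and $b^{[2]}$ bounded; this closely parallels the arguments of \cite{JLMP09,POVE09}, to which I would defer for the fine estimates.
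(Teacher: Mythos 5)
Your proposal is correct and follows essentially the same route as the paper's own sketch: localisation, reduction to the first-order approximation $\alpha_{t_{ik_n}}=b^{[1]}_{t_{ik_n}}\ovl{W}_{t_{ik_n}}+\ovl{\eps}_{t_{ik_n}}$, conditional centering into a martingale array, and the stable martingale CLT of Jacod--Shiryaev (Theorem IX.7.28), with the same conditional second- and fourth-moment computations yielding $C$ for both $Z_n$ and $F_n$. No substantive differences to report.
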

We note that due to $$Z_n \st M \mbox{ and } F_n \cp C,$$ 
and the properties of stable convergence, the studentized statistic satisfies $$\frac{Z_n}{\sqrt{F_n}} \cd N(0,1).$$
In this paper, we will first derive an asymptotic expansion for the pair $(Z_n, F_n)$ and then proceed to calculate the related Edgeworth expansion for the studentized statistic $$\frac{Z_n}{\sqrt{F_n}}.$$
\begin{example}
Our prime example for $g$ is the function $$g(x)=x\wedge (1-x).$$
In this case, we obtain
\begin{align*}
\psi_1^n=1,~
\psi_2^n=\frac{k_n^2+2}{12 k_n^2},~ \psi_3^n=\frac{1}{4}, ~
\psi_4^n=\frac{k_n^2+2}{24 k_n^2} \mbox{  when } k_n \mbox{ is even}
\end{align*}
and
\begin{align*}
\psi_1^n=\frac{k_n-1}{k_n},~
\psi_2^n=\frac{k_n^2-1}{12 k_n^2},~ \psi_3^n=\frac{k_n^2-1}{4 k_n^2}, ~
 \psi_4^n=\frac{k_n^2-2}{24 k_n^2} \mbox{  when } k_n \mbox{ is odd}.
 \end{align*}
In addition, we get
\begin{align*}
\psi_1=1, ~ \psi_2=\frac{1}{12}, ~ \psi_3=\frac{1}{4}, ~ \psi_4=\frac{1}{24}, ~ \psi_5=\frac{1}{96}, ~ \psi_6=\frac{143}{24192}, ~ \psi_7=\frac{1}{105}.
\end{align*}
\end{example}

\section{Stochastic decomposition of $Z_n$} \label{sec3}
In this section, we provide a stochastic decomposition for the bias corrected version of the random variable $Z_n$ defined in the previous section.
This second order stochastic expansion is essential for obtaining the Edgeworth expansion discussed in Sections \ref{sec4} and
\ref{sec5}. Since the first term of the estimator $V_n$ defined in \eqref{PAEstimator} uses the observations $Y_{t_i}$ with $0\leq i \leq d_n k_n$, we effectively estimate the quadratic variation of 
$X$ over the interval $[0, d_n k_n \Delta_n]$. For this reason, we consider the bias corrected statistic
\begin{align} \label{znbar}
\overline{Z}_n= Z_n + \Delta_n^{-1/4}\int_{d_n k_n \Delta_n}^1 (b^{[1]} (X_t))^2 dt.
\end{align}  
Obviously, the statistic $\overline{Z}_n$ also satisfies Theorem \ref{CLT}, since the correction converges to $0$ in probability. However, the bias may affect the higher order asymptotics. 
In the next step, we proceed with the estimation of the bias to construct a feasible statistic. We basically follow the procedure proposed in \cite[Section 4]{HP13}.  Let $p_n$ be a sequence of integers satisfying $p_n \to \infty$, $p_n \Delta_n \to 0$ and  $p_n \sqrt{\Delta_n} 
\to \infty$, and set $J_n:=\{1/ \Delta_n-p_n+1, \ldots,   1/ \Delta_n\}$. For each $
t \in [d_n k_n \Delta_n, 1]$ we define
\begin{align*}
(\widehat{b}^{[1]} (X_t))^2 := \frac{1}{\psi_2^n k_n \Delta_n p_n} \sum_{i+k_n \in J_n} 
(\widebar {Y}_{t_{i}})^2  - \frac{\psi_1^n}{2 \theta^2 \psi_2^n p_n}\sum_{ i \in J_n} (\Delta_i^n Y)^2,
\end{align*}
which is constant in $t$. It has been proved in \cite{HP13} that this local estimator is consistent for 
$(b^{[1]} (X_t))^2$. Thus, a feasible version of the statistic $\overline{Z}_n$ is obtained via
\begin{align} \label{znstar}
Z^{\star}_n= Z_n + \Delta_n^{-1/4}\int_{d_n k_n \Delta_n}^1 (\widehat{b}^{[1]} (X_t))^2 dt.
\end{align}
We remark that $1- d_n k_n \Delta_n = O(\Delta_n^{1/2})$, which implies that 
\begin{align} \label{znnegl}
Z_n^{\star} - \overline{Z}_n = o_{\mathbb{P}} (\Delta_n^{1/4}). 
\end{align}
In the next step,  we will  show that
$$Z_n^{\star}=M_n+\Delta_n^{1/4} N_n
$$
where $M_n$ and $N_n$ are some tight sequences of random variables. Before we go into details, we need more notations.
We again consider the SDE defined in \eqref{SDE}. However, in this section we assume that $b^{[1]},  b^{[2]} \in C^4(\mathbb{R}).$
Under this smoothness assumptions, we apply Ito's lemma and  write $b^{[k]}(X_t),k=1,2$ in the stochastic differential form as
$$d b^{[k]}(X_t)=b^{[k.1]}(X_t) dW_t+b^{[k.2]}(X_t) dt.$$
Similarly, we define the processes $b^{[k_1.k_2.k_3]}(X_t),$ $k_1,k_2,k_3=1,2.$
Throughout the paper, we will also use the shorthand notations 
\begin{equation}
b_t^{[k_1, \ldots, k_d]}=b^{[k_1, \ldots, k_d]}(X_t),~ d=1,2,3,~ k_1, \ldots, k_d=1,2.
\end{equation}
The following process, which is the first order approximation 
of $\widebar Y_{t_{i k_n}},$ will play an important role throughout the proofs:
\begin{equation}
\alpha_{t_{i k_n}}=b^{[1]}_{t _{i k_n}} \widebar{W} _{t _{i k_n}}+\widebar \eps _{t _{i k_n}}.
\end{equation}
Note that the quantity $\alpha_{t_{i k_n}}$ is obtained from $\widebar Y_{t_{i k_n}}$ via freezing the volatility
process at time $t_{i k_n}$ and ignoring the drift process $b^{[2]}$. 
We also need to define a function $g_n$ and a process  $W(i, t)$:
\begin{align} \label{processW(i,t)}
g_n(s)&=\sum_{j=1}^{k_n-1} g(j/k_n) 1_{((j-1)\Delta_n, j\Delta_n]}(s), 
\\
W(i, t)&=\int_{t_{i k_n}\wedge t}^{t_{(i+1)k_n}\wedge t} g_n(u-t_{i k_n}) dW_u.
\end{align}
We note that $g_n(s)$ vanishes for $s \leq 0$ and $s>(k_n-1)\Delta_n.$
Moreover, we obtain the identity $$\widebar W_{t_{i k_n}}=W(i, t_{(i+1) k_n}).$$ 
The next proposition, which gives the expansion of $Z_n^{\star}$, 
is a central result of this section. 

\begin{proposition}\label{StochExpan}
We obtain
$$
Z_n^{\star}=M_n+\Delta_n^{1/4} N_n,
$$
where 
\begin{align*}
M_n=\frac{\Delta_n^{-1/4}}{\psi_2^n}\sum_{i=0}^{d_n-1} \left( \alpha_{t_{i k_n}}^2- \E[\alpha_{t_{i k_n}}^2| \F_{t_{i k_n}}]\right)
\mbox{ and } N_n=\sum_{k=1}^6 N_{n,k}+R_n
\end{align*}
 with 
\begin{align*}
N_{n,1}=&\frac{2 \Delta_n^{-1/2}}{\psi_2^n} \sum_{i=0}^{d_n-1} b^{[1]}_{t _{i k_n}} 
\int_{t _{i k_n}}^{t _{(i+1) k_n}} \nu_i^n(u) dW_u \\
N_{n,2}=& \frac{2 \Delta_n^{-1/2}}{\psi_2^n} \sum_{i=0}^{d_n-1} b_{t_{i k_n}}^{[1]} b_{t_{i k_n}}^{[2.1]} \int_{t_{i k_n}}^{t_{(i+1)k_n}}
\int_{t_{i k_n}}^{u} dW_s W(i,u)g_n(u-t_{i k_n})du, \\
N_{n,3}=& \frac{\Delta_n^{3/2} k_n^2 (\psi_3^n)^2}{\psi_2^n} \sum_{i=0}^{d_n-1} (b_{t_{i k_n}}^{[2]})^2, \\
N_{n,4}=&\frac{\Delta_n^{3/2} k_n^2 (2 \psi_4^n-\psi_2^n)}{2 \psi_2^n}\sum_{i=0}^{d_n-1} 2 b_{t_{i k_n} }^{[1]} b_{t_{i k_n}}^{[1.2]}+(b_{t_{i k_n}}^{[1.1]})^2\\
N_{n,5}=&\frac{2 \Delta_n^{-1/2}}{\psi_2^n} \sum_{i=0}^{d_n-1} \widebar \eps_{t_{i k_n}}  
\left( \psi_3^n k_n \Delta_n b_{t_{i k_n}}^{[2]}   +b_{t_{i k_n}}^{[1.1]} \int_{t_{i k_n}}^{t_{(i+1)k_n}}  \int_{t_{i k_n}}^{u}  dW_s dW(i,u)  \right), \\          
N_{n,6}=& \frac{\Delta_n^{-1/2} \psi_1^n d_n }{\psi_2^n k_n }\left[\omega^2 -\frac{\Delta_n }{2} \sum_{i=1}^{ 1/\Delta_n } (\Delta \eps_i^n)^2 \right],
\\ 
R_n =& o_{\p}(1)
\end{align*}
where
\begin{align*}
\nu_{i}^n(u)&=b_{t_{i k_n}}^{[1.1]} \left(\int_{t_{i k_n}}^{u} \left[2(W_s-W_{t_{i k_n}}) g_n(s-t_{i k_n})+W(i,s) \right] dW_s\right)  g_n(u-t_{i k_n}) \\ +& b_{t_{i k_n}}^{[1.1]} \left(\int_{t_{i k_n}}^{u} g_n(s-t_{i k_n}) ds g_n(u-t_{i k_n})
+\int_u^{t_{(i+1) k_n}} (g_n^2(s-t_{i k_n})-\psi_2^n)ds \right) \\
+& \psi_3^n k_n \Delta_n g_n(u-t_{i k_n}) b_{t_{i k_n}}^{[2]}  .
\end{align*}
\end{proposition}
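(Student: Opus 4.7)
The strategy is to expand $(\widebar Y_{t_{ik_n}})^2$ to second order around the first-order approximation $\alpha_{t_{ik_n}}$ and to track every term contributing up to order $\Delta_n^{1/4}$. Writing $Y = X + \eps$ gives $\widebar Y_{t_{ik_n}} = \widebar X_{t_{ik_n}} + \widebar \eps_{t_{ik_n}}$, and since $\widebar X_{t_{ik_n}} = \int_{t_{ik_n}}^{t_{(i+1)k_n}} g_n(u - t_{ik_n})\, dX_u$ I would repeatedly apply It\^o's lemma to the coefficients $b^{[1]}(X_u), b^{[2]}(X_u)$ (valid thanks to the $C^4$ regularity), freezing them at $t_{ik_n}$ and collecting the resulting stochastic integrals. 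This yields a decomposition $\widebar X_{t_{ik_n}} = b^{[1]}_{t_{ik_n}} \widebar W_{t_{ik_n}} + A_i^n + B_i^n$ where $A_i^n$ is a sum of iterated stochastic integrals of order $\Delta_n^{3/4}$ and $B_i^n$ collects drift/time contributions of order $\Delta_n$.

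Next I would square this expansion and organize the result as
\[
(\widebar Y_{t_{ik_n}})^2 - \mathbb{E}[\alpha_{t_{ik_n}}^2\mid \F_{t_{ik_n}}] = \bigl(\alpha_{t_{ik_n}}^2 - \mathbb{E}[\alpha_{t_{ik_n}}^2\mid \F_{t_{ik_n}}]\bigr) + 2 b^{[1]}_{t_{ik_n}} \widebar W_{t_{ik_n}} (A_i^n + B_i^n) + 2 \widebar \eps_{t_{ik_n}} (A_i^n + B_i^n) + \text{deterministic}_i^n + r_i^n,
\]
with $r_i^n$ of higher order. Multiplying by $\Delta_n^{-1/4}/\psi_2^n$ and summing, the leading block gives $M_n$; the term $2 b^{[1]}_{t_{ik_n}} \widebar W_{t_{ik_n}} A_i^n$ produces, after identifying the integrand with the function $\nu_i^n$ defined in the statement, the contribution $\Delta_n^{1/4} N_{n,1}$, together with the iterated-integral remainder $\Delta_n^{1/4} N_{n,2}$ that carries the $b^{[1]}b^{[2.1]}$ drift/diffusion interaction. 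The drift block $B_i^n$ produces $N_{n,3}$ (via $(\psi_3^n k_n\Delta_n b^{[2]}_{t_{ik_n}})^2$), and the deterministic compensator terms contribute $N_{n,4}$ through the sums $\psi_2^n, \psi_4^n$. The cross terms with $\widebar \eps_{t_{ik_n}}$ give $N_{n,5}$.

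The noise correction $-\tfrac{\psi_1^n d_n \Delta_n}{2\psi_2^n k_n} \sum_i (\Delta_i^n Y)^2$ is handled by writing $(\Delta_i^n Y)^2 = (\Delta_i^n X)^2 + 2\Delta_i^n X \Delta_i^n \eps + (\Delta_i^n \eps)^2$ and using $\mathbb{E}[(\Delta_i^n \eps)^2] = 2\omega^2$. The $(\Delta_i^n X)^2$ and cross terms are of smaller order and absorbed in $R_n$, while the $\omega^2$-centering of $(\Delta_i^n\eps)^2$ yields $\Delta_n^{1/4} N_{n,6}$ after accounting for the factor $\Delta_n^{-1/4}\psi_1^n d_n/(\psi_2^n k_n)$. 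Finally, the bias correction integral in \eqref{znstar} has length $1 - d_n k_n \Delta_n = O(\Delta_n^{1/2})$, so by \eqref{znnegl} it is $o_{\p}(\Delta_n^{1/4})$ and is also absorbed into $R_n$.

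The main obstacle will be the remainder bookkeeping. Each replacement of $b^{[k_1\cdots]}(X_u)$ by $b^{[k_1\cdots]}_{t_{ik_n}}$ generates a residual whose $L^2$ norm is of order $|u - t_{ik_n}|^{1/2} \lesssim \Delta_n^{1/4}$; after multiplication by $\Delta_n^{-1/4}$ and summation over $d_n \asymp \Delta_n^{-1/2}$ blocks one must verify orthogonality and apply Burkholder--Davis--Gundy with care, iterating It\^o's formula one step further on the remaining coefficients (this is what forces the $C^4$ assumption). A secondary delicate point is tracking the exact constants in $N_{n,3}$ and $N_{n,4}$: these arise from carefully computing $\mathbb{E}[\widebar X_{t_{ik_n}}^2 \mid \F_{t_{ik_n}}] - b^{[1]}_{t_{ik_n}} \,^2 \mathbb{E}[\widebar W_{t_{ik_n}}^2 \mid \F_{t_{ik_n}}]$, using $\int_0^{t_{k_n}} g_n(u)\,du$-type sums that converge to $\psi_2, \psi_3, \psi_4$, and one must check that the piecewise smoothness of $g$ does not spoil the approximation of $\psi_i^n$ by $\psi_i$ at the required precision.
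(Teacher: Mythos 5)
Your plan follows essentially the same route as the paper's proof: split $V_n$ into the $(\widebar X)^2$, cross, $\widebar\eps^2$ and noise-correction blocks, apply It\^o's formula with coefficients frozen at $t_{ik_n}$, control each freezing error by Burkholder-type bounds of order $|u-t_{ik_n}|^{1/2}$, and identify $M_n$ and the six $N_{n,k}$ exactly as you describe. The only bookkeeping point to watch when executing it is that part of $N_{n,1}$ (the $\int_u^{t_{(i+1)k_n}}(g_n^2-\psi_2^n)\,ds$ piece of $\nu_i^n$) comes not from the iterated-integral block but from expanding $(b_u^{[1]})^2$ inside the quadratic-variation term $\int g_n^2(b_u^{[1]})^2\,du$ against $V$, which is a martingale contribution rather than a deterministic compensator; otherwise the argument goes through as planned.
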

\begin{proof} See Section \ref{sec7}.\end{proof} 
The meaning and the asymptotic behaviour of the quantity $(M_n,N_n)$ will be explained in Section \ref{sec5}.

\section{Asymptotic expansion theory with respect to mixed normal limit} \label{sec4}
In this section, we briefly summarize the main elements of the martingale expansion  
for a mixed normal limit, which was developed in \cite{YOSH13}.
Suppose that, on a filtered probability space $(\Omega, \mathcal{F}, (\mathcal{F}_t)_{t \in [0, 1]}, \p),$ we have a auxiliary random variable $Z_n$ satisfying $$Z_n=M_n+ r_n N_n,$$
where $N_n$ is a tight sequence of random variables and $(r_n)$ is 
a sequence of positive numbers satisfying $r_n \to 0$ (in our framework
$r_n=\Delta_n^{1/4}$). Note that we had this type of decomposition in the previous section. In addition, we assume that $M_n$ is a terminal value of some continuous $(\mathcal{F}_t)$-martingale $(M_t^n)_{t \in [0, 1]}$ with $M_0^n=0.$ 
We assume that $M_n$ (and hence $Z_n$) converges stably in law to a mixed normal limit $M:$
$$M_n \st M \sim MN(0, C).$$
Here, $M$ is defined on an extension $(\overline{\Omega}, \overline{\mathcal{F}}, \overline{\p})$ of 
$(\Omega, \mathcal{F}, \p).$
Let $F_n$ be a reference random variable, which is general here 
but will be $F_n$ of (\ref{fn}) in Section \ref{sec5}. 
We are interested in the asymptotic expansion of $(Z_n, F_n).$ 

Let $(C_t^n)_{t \in[0,1]}$ denote the quadratic variation process of $M^n$ 
and $(M_t)_{t \in[0,1]},$ defined on an extenstion $(\overline{\Omega}, \overline{\mathcal{F}}, \overline{\p})$ of $(\Omega, \mathcal{F}, \p),$ be a process satisfying $M=M_1.$ 
For $C_n:=C_1^n$ and $F_n$, we denote 
$$\widehat C_n=r_n^{-1}(C_n-C) \mbox{ and } \widehat F_n=r_n^{-1}(F_n-F),$$ 
respectively, where $C$ and $F$ are some random variables.
We impose the following crucial assumption where [B1](i) involves a functional stable convergence.
\bd
\item[[B1\!\!]] \textbf{(i)}
$(M^n_{\cdot}, N_n, \widehat C_n, \widehat F_n) \st (M_{\cdot}, N, \widehat C, \widehat F),$
\begin{itemize} 
 \item[\textbf{(ii)}]  $M_t \sim MN(0, C_t) \mbox{ for each } t \in [0,1].$
\end{itemize}
\ed

All information concerning the Edgeworth expansion for $(Z_n, F_n)$ is contained  in two random symbols $\underline{\sigma}$ and $\overline{\sigma}$, which are introduced in the next subsection.

\subsection{The random symbols $\underline{\sigma}$ and $\overline{\sigma}$}\label{subsec:RandomSymbols}
We start with the random symbol $\underline{\sigma}.$ Let $\widetilde{\mathcal{F}}=\mathcal{F} \vee \sigma(M).$ 
We assume that there exists random variables $\wtilde{C}(z),~  \wtilde{N}(z), ~\wtilde{F}(z)$ such that
$$\widetilde{C}(M)=\mathbb{E}[\widehat C|\widetilde{\mathcal{F}}],~~~
\widetilde{N}(M)=\mathbb{E}[N|\widetilde{\mathcal{F}}], ~~~
\widetilde{F}(M)=\mathbb{E}[\widehat F|\widetilde{\mathcal{F}}].$$
Then, we define the adaptive (classical) random symbol $\underline{\sigma}$ by 
\begin{equation} \label{sigmalower}
\underline{\sigma}(z, \im u, \im v)=\frac{(\im u)^2}{2} \wtilde{C}(z)+\im u \wtilde{N}(z)+\im v \wtilde{F}(z).
\end{equation}
The anticipative random symbol $\overline{\sigma}$ is more involved and 
only given implicitly. 
For this purpose, we define 
\begin{equation}\label{EqPhin}
\Phi_n(u,v)=\E[\exp(-u^2C/2 + \im vF)(e_1^n(u)-1) \psi_n]
\end{equation}
where $e_t^n(u)=\exp(\im u M_t^n+u^2 C_t^n/2)$ and $\psi_n$ is a truncation functional 
%that will be defined later. 
that takes values in $[0,1]$ satisfying at least 
(i) $\bbP[\psi_n=1]=1-o(r_n^{1+\kappa})$ as $n\to\infty$ for some positive constant $\kappa$, and 
(ii) $C_n-C$ is bounded whenever $\psi_n=1$. 
We observe that $(e_t^n(u))_{t\in [0,1]}$ is an exponential martingale, 
that is integrable under the truncation by $\psi_n$. 
Computations of $\Phi_n(u,v)$ can be done as if $\psi_n=1$ in practice 
since the effect of the truncation is asymptotically negligible. 

For $\alpha=(\alpha_1, \alpha_2) \in \mathbb{N}_0^2$, 
\footnote{$\mathbb{N}_0=\bbZ_+=\{0,1,2,...\}$.}
let $|\alpha|=\alpha_1+\alpha_2.$ 
For a function of two variables, we use the following differential operator notations:
$$d^\alpha=d^{\alpha_1}_{x_1} d^{\alpha_2}_{x_2} \mbox{ and } \partial^\alpha=\im^{-|\alpha|} d^\alpha.$$
Set $\Phi_n^\alpha(u,v)=\partial^{\alpha} \Phi_n(u,v)$.
We suppose that the limit 
$$\Phi^{\alpha}(u,v)=\lim_{n \to \infty} r_n^{-1} \Phi_n^\alpha(u,v)$$ 
exists and has the form
\bea\label{characterize.upperbar.general}
\Phi^{\alpha}(u,v)=\partial^{\alpha}\E[\exp(-u^2C/2 + \im vF) 
\overline{\sigma}(\im u, \im v)]
\eea
for every $\alpha\in\bbZ_+^2$,
where $\overline{\sigma}$ is given by
\begin{equation} \label{sigmaupper}
\overline{\sigma}=\sum_{j} \overline{c}_j (\im u)^{m_j} (\im v)^{n_j} \mbox{  (finite sum)}
\end{equation}
where $n_j, m_j \in \mathbb{N}$ and $\overline{c}_j$ are random variables. 
%Details are given in the assumption \textbf{(A4)} in Appendix.
See \cite{YOSH13} for details of the random symbols. 

\begin{remark} \label{SigmaUpperRem}
We note that the random symbol $\underline{\sigma}$ dates back to \cite{YOSH97} which deals with a martingale expansion associated with a normal limit. On the other hand, the random symbol $\overline{\sigma}$ first appeared in \cite{YOSH13} and is due to the mixed normality of the limit. Indeed, if $C$ is deterministic, we may pretend $\psi_n=1$ by a suitable stopping argument  
and obtain $\Psi_n(u,v)=0$ due to the martingale property of $e_t^n(u).$ That means $\overline{\sigma}=0.$
\end{remark}

\subsection{The asymptotic expansion of $(Z_n,F_n)$} \label{subsec:AsymptoticExpansion}
We define the full random symbol $\sigma$ by
$$\sigma=\underline{\sigma}+\overline{\sigma}.$$
We recall from \eqref{sigmalower} and \eqref{sigmaupper} that $\underline{\sigma}$ and $\overline{\sigma}$ are finite polynomials in 
$(\im u, \im v)$ with random coefficients. 
Hence, $\sigma$ admits the decomposition
$$\sigma(z, \im u, \im v)=\sum_{j} c_j(z) (\im u)^{m_j} (\im v)^{n_j} \mbox{  (finite sum)}$$
for some $n_j, m_j \in \mathbb{N}.$ 
We set the approximated density of $(Z_n, F_n)$ as
\begin{align}\label{density.general}
p_n(z,x)=&
\bbE[\phi(z;0,C)|F=x] p^{F}(x)\\
&+r_n \sum_{j}  (-d_z)^{m_j} (-d_x)^{n_j} 
\left( \E[c_j(z)\phi(z;0,C)|F=x]p^{F}(x) \right) \nonumber,
\end{align}
where $\phi(\cdot;0,b^2)$ and $p^{F}$ denote the densities of $N(0, b^2)$ and $F$, respectively. 
We note that  $p^{F}$ exists due to the condition 
that will be imposed later.  
For $K, \gamma>0,$ let 
$$\mathcal{E}(K, \gamma)= \left\{ h:\mathbb{R}^2 \to \mathbb{R}| h \mbox{ is measurable and }|h(z,x)| \leq K(1+|z|+|x|)^{\gamma} \right\}.$$
For $h \in \mathcal{E}(K, \gamma),$ we denote
$$\Delta_n(h)=\left|\E[h(Z_n, F_n)]-\int h(z,x) p_n(z,x) dz dx \right|.$$
We are now at the stage to recall a basic result. We need elements of
Malliavin calculus to state it; see e.g. the book \cite{Nual06} for the main concepts. 
In what follows, 
we will only treat one-dimensional functionals $M^n$ and $F_n$, and 
a two-dimensional Gaussian process as the input process, for simplicity of notation. 
However, it is sufficient for the purpose of this paper.
 
%On probability space $(\Omega,\calf,P)$, we suppose that 
For $\bbH=L^2([-1,1]\times\{1,2\},dt\times\nu)$, $\nu$ being the counting measure, 
let $w=(w(\sfh))_{\sfh\in\bbH}$ be a Gaussian process associated with the Hilbert space $\bbH$. 
That is, $w$ is a family of centered Gaussian random variables such that 
$\E[w({\sf h})w({\sf g})]=\int_{[-1,1]\times\{1,2\}}{\sf h}{\sf g}\>dxd\nu$ for ${\sf h},{\sf g}\in\bbH$. 
The Malliavin derivative is denoted by $D$, while its dual, also called divergence operator, is
denoted by $\delta=D^*$. For a separable Hilbert space ${\sf E}$, 
the Sobolev spaces $\bbD_{s,p}({\sf E})$ of ${\sf E}$-valued random variables are 
well defined, where $s$ is the index of differentiability and $p$ is the index of integrability. 
We simply write $\bbD_{s,p}$ for $\bbD_{s,p}(\bbR)$. 
Let $\bbD_{s,\infty}(E)=\bigcap_{p>1}\bbD_{s,p}(E)$. 

For a multivariate functional $U=(U_1,\ldots, U_d)$  
the Malliavin covariance matrix of  $U$ is given by 
$\sigma_U:=(\langle DU_i, DU_j \rangle_{\bbH})_{1\leq i,j\leq d}$. 
We also set  $\Delta_U:=\det\sigma_U$. 

Besides [B1], we will consider the following conditions. 
We note that the $\bbR$-valued functional $\xi_n$ appearing below is used to construct a truncation functional $\psi_n.$ 
%We will specify $\xi_n$ in Section \ref{SecKsi}.
%
\bd
\item[[B2\!\!]]$_\ell$ {\bf (i)} $F\in\bbD_{\ell,\infty}$ and $C\in\bbD_{\ell,\infty}$. 
\bd
\item[(ii)] $M_n\in\bbD_{\ell+1,\infty}$, $F_n\in\bbD_{\ell+1,\infty}$, 
$C_n\in\bbD_{\ell,\infty}$, $N_n\in\bbD_{\ell+1,\infty}$ and
$\xi_n\in\bbD_{\ell,\infty}.$
Moreover, for every $p>1,$
\begin{align*}
\sup_{n\in\bbN} &\bigg\{\|M_n\|_{\ell+1,p}+\|\hat{C}_n\|_{\ell+1,p}+\|\hat{F}_n\|_{\ell+1,p}
\\&+\|N_n\|_{\ell+1,p}+\|\xi_n\|_{\ell,p}\bigg\}<\infty
\end{align*}

\ed
\ed

\bd
\item[[B3\!\!]] {\bf (i)} $\lim_{n\to\infty}\p[|\xi_n|\leq1/2]=1$. 
\bd
\item[(ii)] $|C_n-C|>r_n^{1-a}$ implies $|\xi_n|\geq1$, where $a\in(0,1/3)$ is a constant. 
\item[(iii)] For any $p>1$, 
$\displaystyle \limsup_{n\to\infty}\bbE\big[1_{\{|\xi_n|\leq1\}}\Delta_{(M_n,F)}^{-p}\big]<\infty$
and $C^{-1}\in\bigcap_{p>1}L^p$. 
\ed
\ed

%Let $\ell_*=4$. 
\bd
\item[[B4\!\!]]$_{\ell,{\mathfrak m},{\mathfrak n}}$ 
{\bf (i)} $\underline{\sigma}$ is a random symbol of the form 
\beas 
\underline{\sigma}(z,{\tt i}u,{\tt i}v)
&=& 
\sum_j b_jz^{k_j}({\tt i}u)^{m_j}({\tt i}v)^{n_j} \qquad 
(b_j\in\bbD_{4,\infty},\ m_j\leq2,\ n_j\leq1)
\eeas
\bd
\item[(ii)] There exists a random symbol $\overline{\sigma}$ having a representation 
\beas 
\overline{\sigma}({\tt i}u, {\tt i}v) 
&=& \sum_j c_j({\tt i}u)^{m_j}({\tt i}v)^{n_j} \qquad
%\text{(finite sum)}
(c_j\in\bbD_{\ell,\infty},\ m_j\leq{\mathfrak m},\ n_j\leq{\mathfrak n})
\eeas
and (\ref{characterize.upperbar.general}) holds for every $\alpha\in\bbZ_+^2$. 
\ed
\ed
In assumption [B5], the term $\Phi_n$ (see \eqref{EqPhin}) involves the truncation functional $\psi_n$ which is described below. Suppose that $\psi:\bbR\to[0,1]$ in $C^\infty(\bbR)$ satisfies 
$\psi(x)=1$ if $|x|\leq1/2$ and $\psi(x)=0$ if $|x|\geq1$.
Let $Q_n=(M_n,F)$ and $R_n=(N_n,\hat{F}_n)$ and define 
a random matrix $R_n'$ by 
\beas 
R_n' 
&=& 
\sigma_{Q_n}^{-1}\big(r_n\langle DQ_n,DR_n\rangle_\bbH
+r_n\langle DR_n,QR_n\rangle_\bbH
+r_n^2\langle DR_n,DR_n\rangle_\bbH\big). 
\eeas
Then $\sigma_{(Z_n,C_n)}=\sigma_{Q_n}(I_2+R_n')$. Let 
\beas 
\xi_n' 
&=& 
r_n^{-1}|R_n'|^2. 
\eeas
Then, the truncation functional $\psi_n$ is composed by 
\bea\label{270819-2} 
\psi_n &=& \psi(\xi_n)\psi(\xi_n'). 
\eea
%where $\xi_n$ will be specified in Section \ref{SecKsi}. 
The functional $\xi_n$ will be set more concretely in Section \ref{SecKsi} for our application.

\bd
\item[[B5\!\!]] 
For every $\alpha\in\bbZ_+^2$ and some $\ep=\ep(\alpha)\in(0,1)$, 
\beas 
\limsup_{n\to\infty} \sup_{(u,v)\in\Lambda^0_n(2,q)}
r_n^{-1}|(u,v)|^{3-\ep}|\Phi^\alpha_n(u,v)| &<& \infty,
\eeas
where $\Lambda^0_n(2,q)=\{(u,v);\>|(u,v)|\leq r_n^{-q}\}$ and $q=(1-a)/2$. 
\ed
The following customizes Theorem 1 of \cite{YOSH13}. 

\begin{theorem} \label{YOSH13MT}
Let $\mathfrak{n}=\max_j n_j$ and $\ell=\max(5, 2 [(\mathfrak{n}+3)/2]).$ 
Let $K$, $\gamma\in(0,\infty)$ and $\kappa\in(0,1)$ be arbitrary numbers.
Suppose that Conditions $[B1]$, $[B2]_\ell$, $[B3]$, 
$[B4]_{\ell,{\mathfrak m},{\mathfrak n}}$ and $[B5]$ are satisfied.
Then for some constant $K_1=K_1(K,\gamma,\kappa)$,
\bea\label{270819-1} 
\sup_{h \in \mathcal{E}(K, \gamma)} \Delta_n(h)
&\leq&
K_1\bbP\big[|\xi_n|>1/2\big]^\kappa+o(r_n).
\eea
\end{theorem}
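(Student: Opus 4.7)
The plan is to follow the scheme of Theorem 1 in \cite{YOSH13}, adapted to the customized conditions listed here. Since $h$ is allowed to have polynomial growth, I would first reduce to a suitable subclass via a density argument: smooth cutoffs plus standard Sobolev embeddings, exploiting the uniform moment bounds in $[B2]_\ell$ to absorb the polynomial factor $(1+|z|+|x|)^\gamma$. Writing $\Delta_n(h)$ through the Fourier transform $\hat h$, the problem reduces to comparing the characteristic function
\[
\varphi_n(u,v)=\bbE\bigl[\exp(\im u Z_n+\im v F_n)\bigr]
\]
with the Fourier transform of $p_n(z,x)$ as defined in (\ref{density.general}), and controlling the discrepancy uniformly over suitable regions of $(u,v)$.

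I would split the Fourier domain into the ``low-frequency'' region $\Lambda^0_n(2,q)=\{|(u,v)|\leq r_n^{-q}\}$ with $q=(1-a)/2$ and its complement. On the complement, non-degeneracy is used: by $[B3](\text{iii})$ and the truncation $\psi_n$ of (\ref{270819-2}), the Malliavin covariance of $(M_n,F)$ is invertible on $\{|\xi_n|\leq1\}$, so integration by parts in the Malliavin sense—iterated as many times as allowed by $[B2]_\ell$—produces negative powers of $|(u,v)|$ that make the tail contribution $o(r_n)$. The residual event $\{|\xi_n|>1/2\}$ is controlled by the first term $K_1\bbP[|\xi_n|>1/2]^\kappa$ on the right-hand side of (\ref{270819-1}).

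On the low-frequency region, I would insert the decomposition $Z_n=M_n+r_n N_n$ and use the exponential martingale $e^n_t(u)=\exp(\im uM^n_t+u^2C^n_t/2)$ to rewrite
\[
\varphi_n(u,v)=\bbE\bigl[e^n_1(u)\exp(-u^2C_n/2+\im vF_n+\im u r_n N_n)\bigr].
\]
Taylor expansion in $r_n$ of the last factor yields, at order $r_n^0$, a term whose limit by $[B1]$ and the stable convergence of $(\widehat C_n,\widehat F_n,N_n)$ equals the Fourier transform of the leading density $\bbE[\phi(z;0,C)|F=x]p^F(x)$. The order $r_n$ correction splits into an \emph{adaptive} piece—arising from $\widehat C_n$, $N_n$ and $\widehat F_n$ and matched exactly to $\underline\sigma$ by the definition (\ref{sigmalower})—and a purely \emph{anticipative} piece, which is captured precisely by $\Phi_n(u,v)$. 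The hypothesis $[B4]_{\ell,{\mathfrak m},{\mathfrak n}}$ together with (\ref{characterize.upperbar.general}) identifies the $r_n$-limit of $\Phi_n^\alpha$ with $\overline\sigma$, and $[B5]$ provides the uniform polynomial bound on $\Phi^\alpha_n$ needed to integrate against $\hat h$ inside $\Lambda^0_n(2,q)$. Combining these two contributions reproduces the sum over $j$ in (\ref{density.general}).

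The main obstacle is the treatment of $\overline\sigma$: unlike the adaptive part, it does not emerge from differentiating the mixed-normal limit, but from the non-vanishing cross term $\bbE[(e^n_1(u)-1)\exp(-u^2C/2+\im vF)\psi_n]$ that persists because $C$ is random (see Remark \ref{SigmaUpperRem}). Controlling this term uniformly over $\Lambda^0_n(2,q)$ requires both the truncation $\psi_n$ (to keep $C_n-C$ bounded and to localize the Malliavin covariance) and the smoothness input from $[B2]_\ell$ with $\ell=\max(5,2\lceil(\mathfrak n+3)/2\rceil)$, whose exact form is dictated by how many integration-by-parts steps are needed to absorb the factors $(\im v)^{n_j}$ appearing in $\overline\sigma$. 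Once the matching of symbols and the non-degeneracy bounds are in place, assembling the estimate (\ref{270819-1}) is a routine combination, with the $\bbP[|\xi_n|>1/2]^\kappa$ term accounting for the event where the truncation is active and the $o(r_n)$ term accounting for the higher-order Taylor remainders.
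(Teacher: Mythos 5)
The paper gives no proof of this theorem: it is stated as a customization of Theorem 1 of \cite{YOSH13}, and the reader is referred to that work for the argument. Your outline faithfully reproduces the strategy of that reference --- Fourier reduction, the split at $\Lambda^0_n(2,q)$, Malliavin integration by parts under the truncation $\psi_n$ for the high-frequency tail, the exponential-martingale decomposition matching $\underline{\sigma}$ and $\overline{\sigma}$, and the provenance of the $\bbP[|\xi_n|>1/2]^\kappa$ and $o(r_n)$ terms --- so it is consistent with the route the paper relies on, though as a roadmap it necessarily leaves the quantitative estimates to the cited source.
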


In other words, $p_n$ is the second order Edgeworth expansion of the 
distribution of the pair $(Z_n,F_n)$, 
if the event truncated by $\xi_n$ is sufficiently small.

See \cite{YOSH13} for details of the above theorem 
and other information, and also arXiv 1210.3680v3 for updates. 
The Malliavin calculus is used to derive the asymptotic expansion formula $p_n$. 
Further, one needs the non-degeneracy of the Malliavin covariance since 
the problem of validity of the asymptotic expansion is deeply related with 
the regularity of the distribution of the underlying functional. 
There is a counterexample, even in the classical expansion, if [B3] (iii) is not satisfied. 
Condition [B5] is also a requirement for the non-degeneracy of the same kind but regarding 
the correction term corresponding to the anticipative random symbol. 
\begin{remark}\rm 
Under a stronger assumption that $\bbP[|\xi_n|\leq1/2]=1-O(r_n^\kappa)$ for any $\kappa>0$, 
in place of [B3] (i), 
we can simply use $\psi_n=\psi(\xi_n)$ without $\xi_n'$ apparently, 
and remove the first term on the right-hand side of (\ref{270819-1}). 
This makes presentation of the result slightly simpler though 
the deeper truncation (\ref{270819-2}) is re-constructed in the proof. 
In this case, Condition [B5] may also become stronger since the truncation reduces. 
\end{remark}

\section{Main results: Asymptotic expansion for the pre-averaging estimator} \label{sec5}
In this section, we utilise the results from the general theory for a mixed normal limit and obtain the Edgeworth expansion for the pre-averaging estimator.

\subsection{Assumptions}
We will consider $F_n$ defined in (\ref{fn}) as a consistent estimator of $C$. 
We denote by $C_{b}^\infty$ the set of smooth functions on $\bbR$ whose all derivatives of positive orders 
are bounded. 
Let 
\beas 
a(x) &=&  2\theta \left((b^{[1]}(x))^2+ \frac{\omega^2 \psi_1}{\theta^2 \psi_2} \right)^2.
\eeas
We are assuming that $\text{supp}\call\{X_0\}$ is compact and that $\omega$ is positive. 
We impose the following condition on the processes $b^{[1]}$ and $b^{[2]}$.
\bd
\item[[V\!\!]] 
{\bf (i)} $b^{[1]}$, $b^{[2]}\in C^\infty_{b}$ and $b^{[1]}(x)\not=0$ 
for $x\in\text{supp }\call\{X_0\}$. 
\bd
\item[(ii)] $\sum_{k=1}^\infty |d_x^ka(X_0)|>0$. 
\ed\ed

\begin{remark}\rm 
If $b^{[1]}$ is nonnegative, Condition [V] (ii) can be replaced by 
\bd
\item[(ii$'$)]  $\sum_{k=1}^\infty |d_x^kb^{[1]}(X_0)|>0$. 
\ed
\end{remark}

\begin{remark}\rm 
By assumption, $\text{supp }\call\{X_0\}$ is compact.
We do not assume uniform ellipticity of the diffusion on the whole domain of $b^{[1]}$.
The microstructure noise serves as a smoother of the distribution of $M_n$. 
On the other hand, we need regularity of the distribution of $C$ defined in Theorem \ref{CLT}. 
Practically this would be satisfied once $C$ is random. 
If $C$ is deterministic, the problem of asymptotic expansion becomes a classical one 
that is tractable by \cite{YOSH97}.  We refer to Section 6 for the exposition of this setting. 
\end{remark}

\subsection{Stable limit theorem}\label{SLT}
We have seen in the previous section that the random coefficients of $\underline \sigma$ solely depend on the limit of the stable convergence found in the condition \textbf{[B1]}. Hence, we need to compute $M, N, \widehat{C}, \widehat{F}.$ 
In this section, we assume that 
\bea\label{20150816-1} 
\eps_{t_i}=\omega \Delta_n^{-1/2}  (B_{t_i}-B_{t_{i-1}})\qquad (i\geq0)
\eea
where 
for the Gaussian process $w=(w(\sfh))_{\sfh\in\bbH}$, 
$B_t=w(\mathbbm{1}_{[-1,t]\times\{2\}})$ for $t\in[-1,1]$, 
as well as $W_t=w(\mathbbm{1}_{[0,t]\times\{1\}})$ for $t\in[0,1]$. 
We assume that $w$ is independent of $X_0$. The Malliavin derivative in the directions of $W$ and $B$ are denoted by $D^{(1)}$  and $D^{(2)}$, respectively.

The particular Gaussian framework 
of the model \eqref{20150816-1} is imposed to be able to use Malliavin calculus.   
Our results can be directly extended to a more general setting
\[
\eps_{t_i}= \Delta_n^{-1/2} \int_{t_{i-1}}^{t_i} \omega_s dB_s,
\]
where $\omega$ is adapted to the filtration $\mathcal G_t=\sigma(X_0, (W_s)_{s\leq t})$, under mild assumptions on the stochastic process $(\omega_t)_{t\geq 0}$ (cf. \cite{JLMP09}). However, we dispense with the detailed exposition of this case.   

Similarly to $g_n(s)$ and $W(i, t),$ we define 
\begin{align*}
h_n(s)&=\sum_{j=0}^{k_n-1} h(j/k_n) 1_{(t_{j-1}, t_j]}(s), 
\\
\eps(i, t)&=-\omega \Delta_n^{-1/2} \int_{t_{i k_n-1}\wedge t}
^{t_{(i+1) k_n-1}\wedge t} h_n(u-t_{i k_n}) dB_u.
\end{align*}
We remark that 
$$\widebar \eps_{t_{i k_n}}=\eps(i, t_{(i+1) k_n})
=\ep(i,t_{(i+1)k_n-1})
.$$
Let
\beas 
\alpha(i, u)&=& b_{t_{i k_n}}^{[1]} W(i,u)+ \eps(i, u)
\eeas
and remark that $\alpha(i, u)$, conditionally on $\mathcal{F}_{t_{i k_n}},$ is distributed as 
$$N \left(0, \int_{t_{i k_n-1}}^{u} [g_n(u-t_{i k_n})^2 (b_{t_{i k_n}}^{[1]})^2 + \omega^2 \Delta_n^{-1} h_n(u-t_{i k_n})^2] du
\right).$$ 
We will consider the filtration ${\bf F}=(\calf_t)_{t\in[0,1]}$, each $\calf_t$ 
being generated by $X_0$, 
$\{W_s\}_{s\in[0,t]}$ and $\{B_s\}_{s\in[-1,t]}$.
It\^o's lemma implies that $M_n$ is the terminal value of the ${\bf F}$-continuous martingale 

\bea\label{270814-3}
M_t^n=\frac{2 \Delta_n^{-1/4}}{\psi_2^n} \sum_{i=0}^{d_n-1} \int_{t_{i k_n} \wedge t}^{t_{(i+1) k_n} \wedge t} 
\alpha(i, u)  d \alpha(i, u).  
\eea
According to the expression (\ref{270814-3}),
we observe that the quadratic variation of $M^n$ satisfies

\beas
C_n=\left \langle M^n \right \rangle_1
&=&
\frac{4 \Delta_n^{-1/2}}{(\psi_2^n)^2} \sum_{i=0}^{d_n-1} 
\int_{t_{i k_n}}^{t_{(i+1) k_n}}
\alpha^2 \left(i, u \right) [d \alpha(i, u)]^2, 
\eeas
where 
\begin{align} \label{dalpha}
[d \alpha(i, u)]^2=[(b_{t_{i k_n}}^{[1]})^2 g_n(u-t_{i k_n})^2+ \omega^2 \Delta_n^{-1} h_n(u-t_{i k_n})^2] du.
\end{align}
Before going to the stable limit theorem, we observe that some of terms included in $N_n$ converge in probability. We want to separate them from others.
\begin{lemma} We obtain the convergence in probability
$$N_{n,k} \cp N_k, ~ k=2,3,4$$
where the quantities $N_{n,k}$ are defined in Proposition \ref{StochExpan} and
\begin{align*}
N_2=& \frac{ \theta (\psi_3)^2}{ \psi_2} \int_0^1 b_u^{[1]} b_u^{[2.1]} du,\\
N_3=& \frac{ \theta (\psi_3)^2}{ \psi_2} \int_0^1 (b_u^{[2]})^2 du,\\
N_4=& \frac{ \theta (2 \psi_4-\psi_2)}{ 2 \psi_2} \int_0^1 2 b_u^{[1]} b_u^{[1.2]}+(b_u^{[1.1]})^2 du, \\
\end{align*}
\end{lemma}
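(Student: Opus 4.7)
The plan is to handle $N_{n,3}$ and $N_{n,4}$ by a direct Riemann sum argument, while $N_{n,2}$ requires an additional conditional-centering step to isolate a residual martingale-difference sum that vanishes.

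For $N_{n,3}$ and $N_{n,4}$, I would rewrite the prefactor as $\Delta_n^{3/2}k_n^2=(k_n\Delta_n^{1/2})(k_n\Delta_n)$ and use that $k_n\Delta_n^{1/2}\to\theta$, $\psi_j^n\to\psi_j$, together with the elementary Riemann-sum convergence
\[
(k_n\Delta_n)\sum_{i=0}^{d_n-1} f(X_{t_{ik_n}})\;\longrightarrow\;\int_0^1 f(X_u)\,du
\]
valid almost surely for any continuous $f$, since $X$ has continuous paths on $[0,1]$, the mesh $k_n\Delta_n\to 0$, and $d_n k_n \Delta_n \to 1$. Applying this with $f=(b^{[2]})^2$ resp.\ $f=2b^{[1]}b^{[1.2]}+(b^{[1.1]})^2$ (both continuous under assumption [V]) yields $N_{n,3}\cp N_3$ and $N_{n,4}\cp N_4$.

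For $N_{n,2}$, write the $i$-th summand as $b^{[1]}_{t_{ik_n}} b^{[2.1]}_{t_{ik_n}} I_i^n$ with
\[
I_i^n=\int_{t_{ik_n}}^{t_{(i+1)k_n}}(W_u-W_{t_{ik_n}})\,W(i,u)\,g_n(u-t_{ik_n})\,du,
\]
and split $I_i^n=E_i^n+R_i^n$ with $E_i^n=\E[I_i^n\mid\calf_{t_{ik_n}}]$. By Itô isometry the conditional expectation equals
\[
E_i^n=\int_{t_{ik_n}}^{t_{(i+1)k_n}}\!\Big(\int_{t_{ik_n}}^u g_n(s-t_{ik_n})\,ds\Big)\,g_n(u-t_{ik_n})\,du,
\]
which by the identity $\sum_{k}g(k/k_n)\sum_{j<k}g(j/k_n)=\tfrac12[(\sum g)^2-\sum g^2]$ evaluates explicitly to $\tfrac12(k_n\Delta_n)^2(\psi_3^n)^2$. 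Substituting into $N_{n,2}$ and factoring as in the previous paragraph gives
\[
\frac{\Delta_n^{-1/2}(k_n\Delta_n)^2(\psi_3^n)^2}{\psi_2^n}\sum_{i=0}^{d_n-1}b^{[1]}_{t_{ik_n}}b^{[2.1]}_{t_{ik_n}}=\frac{(k_n\Delta_n^{1/2})(k_n\Delta_n)(\psi_3^n)^2}{\psi_2^n}\sum_{i=0}^{d_n-1}b^{[1]}_{t_{ik_n}}b^{[2.1]}_{t_{ik_n}}\;\longrightarrow\;N_2
\]
by Riemann-sum convergence applied to the continuous integrand $b^{[1]}b^{[2.1]}$.

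It remains to show that the residual $\frac{2\Delta_n^{-1/2}}{\psi_2^n}\sum_i b^{[1]}_{t_{ik_n}}b^{[2.1]}_{t_{ik_n}}R_i^n$ vanishes in probability. Since the weights are $\calf_{t_{ik_n}}$-measurable and $\E[R_i^n\mid\calf_{t_{ik_n}}]=0$, this is a sum of martingale differences and it suffices to bound its $L^2$ norm. Applying Itô's formula to $(W_u-W_{t_{ik_n}})W(i,u)$ and subtracting the compensator gives
\[
R_i^n=\int_{t_{ik_n}}^{t_{(i+1)k_n}}\Big[W(i,v)+(W_v-W_{t_{ik_n}})g_n(v-t_{ik_n})\Big]\Big(\int_v^{t_{(i+1)k_n}}g_n(u-t_{ik_n})\,du\Big)\,dW_v
\]
by Fubini. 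Using $\|g_n\|_\infty$ bounded, Itô isometry shows $\E[(R_i^n)^2\mid\calf_{t_{ik_n}}]=O((k_n\Delta_n)^4)$, hence the martingale-difference sum has variance $O(d_n(k_n\Delta_n)^4)=O((k_n\Delta_n)^3)=O(\Delta_n^{3/2})$ (using boundedness of $b^{[1]}$ and $b^{[2.1]}$ from [V]). Multiplying by $\Delta_n^{-1/2}$ yields an $L^2$ bound $O(\Delta_n^{1/2})\to 0$, so the residual is $o_\p(1)$ and $N_{n,2}\cp N_2$. The main obstacle is the $N_{n,2}$ term; everything else reduces to bookkeeping of the $\psi$-constants and Riemann-sum convergence for continuous functionals of $X$.
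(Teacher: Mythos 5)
Your proof is correct. Note that the paper states this lemma without any proof at all (there is no proof environment following it, and the argument does not reappear in Section \ref{sec7}), so there is no ``paper's route'' to compare against; your write-up supplies the missing details in the standard way. The treatment of $N_{n,3}$ and $N_{n,4}$ as weighted Riemann sums with $k_n\Delta_n^{1/2}\to\theta$, $\psi_j^n\to\psi_j$ is exactly what is needed, and for $N_{n,2}$ your conditional centering is the right idea: the compensator $E_i^n=\tfrac12(k_n\Delta_n\psi_3^n)^2$ is deterministic and reproduces the same Riemann-sum structure, while the centered part is a martingale-difference array with $\calf_{t_{ik_n}}$-measurable weights, so the $L^2$ computation via It\^o isometry closes the argument. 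One cosmetic slip: with $\E[(R_i^n)^2]=O((k_n\Delta_n)^4)$ the variance of $\Delta_n^{-1/2}\sum_i a_iR_i^n$ is $O(\Delta_n^{-1}\cdot d_n(k_n\Delta_n)^4)=O(\Delta_n^{1/2})$, so the $L^2$ norm of the residual is $O(\Delta_n^{1/4})$ rather than $O(\Delta_n^{1/2})$; this does not affect the conclusion, and in fact the $o_\p(\Delta_n^{1/4})$-type bookkeeping elsewhere in the paper only needs $o_\p(1)$ here.
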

After recalling the following notation used in the previous section
$$\widehat C_n=\Delta_n^{-1/4}(C_n-C), ~ \widehat F_n=\Delta_n^{-1/4}(F_n-C),$$
we are ready to state our stable convergence result.
\begin{theorem}\label{CLT2} We obtain the stable convergence
$$\left(M_n, N_n, \widehat C_n , \widehat F_n   \right) \st (M, N, \widehat C, \widehat F) \sim MN \left(\mu,\int_0^1 \Sigma_s ds \right),$$
where $\mu_1=\mu_3=\mu_4=\Sigma_s^{12}=\Sigma_s^{23}=\Sigma_s^{24}=0,$
$$~\mu_2= \int_0^1 \mu_2(b_s^{[1]}, b_s^{[2]}, b_s^{[1.1]}) dW_s+\sum_{k=2}^4 N_k$$ 
and
\begin{align*}
\Sigma_s^{11}&= 2 \theta \left((b_s^{[1]})^2+ \frac{\omega^2 \psi_1}{\theta^2 \psi_2} \right)^2, ~~
\Sigma_s^{22}=\sigma_2(b_s^{[1]}, b_s^{[2]}, b_s^{[1.1]})-\left[\mu_2(b_s^{[1]}, b_s^{[2]}, b_s^{[1.1]})\right]^2  
\\
\Sigma_s^{33}&= \frac{16 \theta^3}{3} \left((b_s^{[1]})^2+ \frac{\omega^2 \psi_1}{\theta^2 \psi_2} \right)^4, ~~
\Sigma_s^{44}= \frac{128}{3} \theta^3 \left((b_s^{[1]})^2+ \frac{\omega^2 \psi_1}{\theta^2 \psi_2} \right)^4 \\
\Sigma_s^{13}&= \frac{8  \theta^2}{3} \left((b_s^{[1]})^2+ \frac{\omega^2 \psi_1}{\theta^2 \psi_2} \right)^3,   ~~~
\Sigma_s^{14}= 8  \theta^2\left((b_s^{[1]})^2+ \frac{\omega^2 \psi_1}{\theta^2 \psi_2} \right)^3  ~~~ \\
\Sigma_s^{34}&= \frac{44  \theta^2}{3} \left((b_s^{[1]})^2+ \frac{\omega^2 \psi_1}{\theta^2 \psi_2} \right)^3,
\end{align*}
with
\begin{align*}
\mu_2(x,y,z)&= \frac{\theta x}{\psi_2} \left([(\psi_3)^2+2 \psi_4-\psi_2] z+2 (\psi_3)^2 y \right) \\
\sigma_{2}(x,y,z)&=\frac{4 \theta^2 x^2}{\psi_2^2}\left[  
(\psi_7+ \psi_6) z^2+(4 \psi_5-\psi_2) \psi_3 y z+ \psi_3^2 \psi_2 y^2 \right] 
\\ & +\frac{4 \omega^2 \psi_1}{\psi_2^2}\left[\psi_3^2 y^2+ \psi_4 z^2 \right]+\frac{3 \omega^4 \psi_1^2 }{\theta^4 \psi_2^2 }.
\end{align*}
\end{theorem}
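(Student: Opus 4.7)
The plan is to prove the joint stable convergence by applying a multidimensional discrete-time martingale stable central limit theorem to block-wise sums indexed by $i=0,\ldots,d_n-1$. First, using Proposition~\ref{StochExpan}, I would write
\[
N_n = N_{n,1}+N_{n,5}+N_{n,6} \;+\; \sum_{k=2}^{4} N_{n,k} \;+\; R_n,
\]
where $R_n=o_\p(1)$. By the lemma immediately preceding the theorem, $N_{n,k}\cp N_k$ for $k=2,3,4$, so those terms contribute only to the drift $\mu_2$ and can be set aside. It thus remains to establish the joint stable convergence of the four-dimensional sequence $(M_n,\; N_{n,1}+N_{n,5}+N_{n,6},\;\widehat C_n,\;\widehat F_n)$ to a mixed normal limit whose mean and covariance are as claimed (with $\sum_{k=2}^4 N_k$ added to the second coordinate of the mean).

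Each of the four sequences can be written as $\sum_{i=0}^{d_n-1}\zeta_i^{(n,\cdot)}$ with $\zeta_i^{(n,\cdot)}$ being $\F_{t_{(i+1)k_n}}$-measurable. I would decompose $\zeta_i^{(n,\cdot)}=\Delta_i^{(n,\cdot)}+P_i^{(n,\cdot)}$ into the martingale-difference part $\Delta_i^{(n,\cdot)}=\zeta_i^{(n,\cdot)}-\E[\zeta_i^{(n,\cdot)}\mid\F_{t_{ik_n}}]$ and the predictable part $P_i^{(n,\cdot)}$. For $M_n$ the predictable part vanishes by construction; for $\widehat C_n$ and $\widehat F_n$ the Gaussian moment identities applied to $\alpha(i,u)$ conditional on $\F_{t_{ik_n}}$ reduce $\sum_i P_i^{(n,\cdot)}$ to a Riemann sum converging to $C$, which is exactly what is subtracted to define $\widehat C_n,\widehat F_n$, so the residual drift is $o_\p(1)$. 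For $N_{n,1}$, however, conditional Itô-isometry applied to the double $W$-integral inside $\nu_i^n$ produces a genuine surviving drift of the form $\int_0^1\mu_2(b_s^{[1]},b_s^{[2]},b_s^{[1.1]})\,dW_s$ after passing to the limit (this uses $k_n\Delta_n^{1/2}\to\theta$ and $\psi_j^n\to\psi_j$). The predictable parts of $N_{n,5}$ and $N_{n,6}$ vanish because $\widebar\eps_{t_{ik_n}}$ and $\sum_{i}[(\Delta_i^n\eps)^2-\omega^2]$ are centered independently of $\F_{t_{ik_n}}$.

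Next, I would apply a multidimensional martingale stable CLT (Jacod's theorem) to the four-dimensional array $(\Delta_i^{(n,1)},\ldots,\Delta_i^{(n,4)})$. Three conditions must be verified: \textbf{(a)} convergence of the predictable covariation $\sum_i\E[\Delta_i^{(n,k)}\Delta_i^{(n,l)}\mid\F_{t_{ik_n}}]\cp\int_0^1\Sigma_s^{kl}\,ds$ for every pair $(k,l)$; \textbf{(b)} a Lyapunov condition $\sum_i\E[|\Delta_i^{(n,k)}|^{2+\delta}\mid\F_{t_{ik_n}}]\cp 0$ for some $\delta>0$; \textbf{(c)} the nesting/orthogonality condition asserting that the predictable quadratic covariation of $\sum_i\Delta_i^{(n,k)}$ with any bounded martingale on $(\Omega,\F)$, in particular with $W$ and $B$, vanishes in the limit, which is what delivers the mixed normal (as opposed to merely Gaussian) character of the limit and its independence of the original filtration. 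Condition \textbf{(b)} follows from the Gaussian structure of $\alpha(i,u)$ together with the uniform bounds on $b^{[k_1.\,\ldots\,.k_d]}$ under assumption~[V], which give $\E[|\Delta_i^{(n,k)}|^4\mid\F_{t_{ik_n}}]=O(\Delta_n)$ blockwise, hence $O(d_n^{-1})$ after summation. Condition \textbf{(c)} is checked by noting that each inner stochastic integral against $W$ or $B$ on a block, paired with another block-measurable factor, contributes terms of order strictly smaller than the dominant block size.

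The main obstacle is the explicit calculation of the limiting covariance matrix $\Sigma_s$. Since $\alpha(i,u)$ is, conditionally on $\F_{t_{ik_n}}$, a centered Gaussian martingale with variance $\int_{t_{ik_n}}^{u}[(b_{t_{ik_n}}^{[1]})^2 g_n^2(s-t_{ik_n})+\omega^2\Delta_n^{-1}h_n^2(s-t_{ik_n})]\,ds$, all the relevant conditional moments reduce to Gaussian moment computations up to order $8$ (required because $F_n$ involves $\alpha^4$ and its fluctuation therefore needs eighth moments). The key algebraic identity underlying the entire computation is that the limiting variance of $\alpha(i,t_{(i+1)k_n})$ is asymptotically $\theta[(b_{t_{ik_n}}^{[1]})^2\psi_2+\omega^2\psi_1/\theta^2]$, which is why the factor $(b_s^{[1]})^2+\omega^2\psi_1/(\theta^2\psi_2)$ recurs in every diagonal entry and in the three nonvanishing off-diagonal terms $\Sigma^{13},\Sigma^{14},\Sigma^{34}$. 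The vanishing of $\Sigma^{12},\Sigma^{23},\Sigma^{24}$ reflects that the block integrand of the second coordinate involves odd-order Wiener-chaos contributions (through $\nu_i^n$ or $\widebar\eps$), while $M_n$, $\widehat C_n$, $\widehat F_n$ are built from even-order chaos in $\alpha$; the cross moments therefore vanish by Wick/Isserlis identities, leaving only a contribution to $\Sigma^{22}$ itself (which evaluates to the stated $\sigma_2-\mu_2^{\,2}$).
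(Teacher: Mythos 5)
Your overall strategy coincides with the paper's: write each coordinate as a triangular array of block increments over $i=0,\dots,d_n-1$, peel off the parts converging in probability ($N_{n,k}\cp N_k$ for $k=2,3,4$, and the centering residuals $K_n,L_n\cp0$ for $\widehat C_n,\widehat F_n$), check the conditional covariations, a Lyapunov/Lindeberg condition via fourth moments of the conditionally Gaussian $\alpha(i,u)$, and orthogonality to martingales orthogonal to $(W,B)$, then invoke Theorem IX.7.28 of Jacod--Shiryaev. The identification of the vanishing entries $\Sigma^{12},\Sigma^{23},\Sigma^{24}$ via odd/even chaos parity and of $\Sigma^{22}=\sigma_2-\mu_2^2$ as the variance orthogonal to the $W$-component is also in line with the paper.

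However, there is a genuine error in how you produce the term $\int_0^1\mu_2(b_s^{[1]},b_s^{[2]},b_s^{[1.1]})\,dW_s$ in the limit mean. You attribute it to a ``surviving predictable drift'' of $N_{n,1}$ extracted by conditional It\^o isometry; but the block contribution of $N_{n,1}$ is an It\^o integral started at $t_{ik_n}$, so $\E\bigl[\int_{t_{ik_n}}^{t_{(i+1)k_n}}\nu_i^n(u)\,dW_u\,\big|\,\F_{t_{ik_n}}\bigr]=0$ and the predictable part you describe is identically zero --- no drift survives there. Simultaneously, your condition \textbf{(c)} asserts that the predictable covariation of every coordinate with $W$ and $B$ vanishes in the limit; for the second coordinate this is false and contradicts the statement you are proving. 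The correct mechanism, and the one the paper uses, is precisely the non-vanishing $W$-covariation:
\begin{equation*}
\sum_{i=0}^{d_n-1}\E\bigl[\chi_{i,2}^n\,(W_{t_{(i+1)k_n}}-W_{t_{ik_n}})\,\big|\,\F_{t_{ik_n}}\bigr]\;\cp\;\int_0^1\mu_2(b_s^{[1]},b_s^{[2]},b_s^{[1.1]})\,ds,
\end{equation*}
driven by the first-chaos (deterministic-integrand) part of $\int\nu_i^n(u)\,dW_u$; Jacod's theorem then delivers a limit of the form $\int_0^1\mu_2(\dots)\,dW_s$ plus a component orthogonal to $(W,B)$ with conditional variance $\int_0^1(\sigma_2-\mu_2^2)\,ds$. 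As written, your argument would yield a limit for the second coordinate missing the $\int_0^1\mu_2\,dW_s$ part (or with it derived from a quantity that is zero), so condition \textbf{(c)} must be restated to require convergence of the $W$-covariations to $\int_0^1\mu_s^k\,ds$ (zero only for $k=1,3,4$) and vanishing covariation with $B$ and with all martingales orthogonal to $(W,B)$.
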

\begin{proof} See Section \ref{sec7}. \end{proof}
\subsection{Computation of $\underline \sigma$ and $\overline \sigma$ }
It is now straightword to compute $\underline \sigma.$ Indeed, the mixed normality of the limit in Theorem \ref{CLT2} and \eqref{sigmalower} imply
\begin{equation} \label{sigmalower2}
\underline \sigma(z, \im u, \im v)= (\im u)^2 \mathcal{H}_1(z)+\im u \mathcal{H}_2+\im v \mathcal{H}_3(z)
\end{equation}
where
$$\mathcal{H}_1(z)=z \frac{\int_{0}^{1} \Sigma_{s}^{13}ds}{2 \int_{0}^{1} \Sigma_{s}^{11}ds}, ~~~ \mathcal{H}_2=\mu_2,~~~ \mathcal{H}_3(z)=z \frac{\int_{0}^{1} \Sigma_{s}^{14}ds}{ \int_{0}^{1} \Sigma_{s}^{11}ds}.$$
Now, we pass to the calculation of $\overline \sigma.$ 
The anticipative random symbol $\overline{\sigma}$ in (\ref{characterize.upperbar.general}) 
is characterized by  
\bea\label{characterize.upperbar}
\Phi^{\alpha}(u,v)=\partial^{\alpha}\E[\exp((-u^2/2 + \im v) C) \overline{\sigma}(\im u, \im v)]
\eea
in the present situation.
Using techniques from  Malliavin calculus, we obtain the following result.

\begin{proposition} \label{UpperSigmaLemma}
We obtain the identity
\begin{equation}\label{sigmaupper2}
\overline \sigma (\im u, \im v)= \im u \left(-\frac{u^2}{2}+ \im v \right)^2 \mathcal{H}_4+\im u \left(-\frac{u^2}{2}+ \im v\right) \mathcal{H}_5 ,
\end{equation}
where $c(x)=\left[(b^{[1]})^2(x)+ \frac{\omega^2 \psi_1}{\theta^2 \psi_2} \right]^2$ and
\begin{align*}
\mathcal{H}_4=& \frac{4 \theta^3 \psi_3^2 }{\psi_2} \int_0^1 (b^{[1]})^2(X_t) 
\left( \int_t^1 c'(X_r) D_t ^{(1)}X_r dr  \right)^2 dt, \\
\mathcal{H}_5=& \frac{2 \theta^2 \psi_3^2 }{\psi_2} \int_0^1 (b^{[1]})^2(X_t) \left( \int_t^1 
\left[c''(X_r) (D_t^{(1)} X_r)^2+c'(X_r) D_t^{(1)} D_t^{(1)} X_r \right] dr  \right) dt.
\end{align*}
\end{proposition}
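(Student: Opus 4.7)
The strategy is to characterize $\overline\sigma$ by computing the leading asymptotic $\lim_n r_n^{-1}\Phi_n(u,v)$ in (\ref{characterize.upperbar}) via two successive applications of Malliavin integration by parts (IBP). Since $(e_t^n(u))_{t\in[0,1]}$ is an exponential martingale (valid under the truncation by $\psi_n$), Itô's formula gives $e_1^n(u)-1 = \im u\int_0^1 e_s^n(u)\,dM_s^n$. Using the explicit form (\ref{270814-3}), on each bin $[t_{ik_n},t_{(i+1)k_n})$ we have
$$
dM_s^n = \frac{2\Delta_n^{-1/4}}{\psi_2^n}\,\alpha(i,s)\,\bigl[b^{[1]}_{t_{ik_n}} g_n(s-t_{ik_n})\,dW_s - \omega\Delta_n^{-1/2} h_n(s-t_{ik_n})\,dB_s\bigr].
$$
Setting $\Xi:=\exp(-u^2 C/2+\im v F)\psi_n$, the Malliavin duality transfers these $dW$- and $dB$-stochastic integrals onto $D_s^{(1)}$- and $D_s^{(2)}$-derivatives of $\Xi$. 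Because $C$ is a functional only of $X$ (hence only of $W$) and $F\to C$ in probability, $D_s^{(2)}\Xi$ is of lower order after truncation, so only the $dW$-part contributes at order $r_n$. Moreover, at leading order, $D_s^{(1)}\Xi \approx \Xi(-u^2/2+\im v)\,D_s^{(1)}C$ with $D_s^{(1)}C = 2\theta\int_s^1 c'(X_r)D_s^{(1)} X_r\,dr$.

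After this first IBP one is left with a sum of bin-local contributions proportional to
$$
\frac{2\Delta_n^{-1/4}}{\psi_2^n}\,b^{[1]}_{t_{ik_n}}\int_{t_{ik_n}}^{t_{(i+1)k_n}} e_s^n(u)\,\alpha(i,s)\,g_n(s-t_{ik_n})\,D_s^{(1)}C\,ds.
$$
Freezing the smooth factors on the short bin, $e_s^n(u)\approx e^n_{t_{ik_n}}(u)$ and $D_s^{(1)}C\approx D_{t_{ik_n}}^{(1)}C$, reduces the residual to $\tilde A_i:=\int\alpha(i,s)g_n(s-t_{ik_n})\,ds$, which, by Fubini, is itself a stochastic integral against $dW$ (from the $W(i,\cdot)$ piece of $\alpha$) plus an analogous $dB$-term. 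A \emph{second} application of Malliavin IBP transfers the inner $dW$-integral onto the anticipative factor $\Xi\cdot e^n_{t_{ik_n}}(u)\,b^{[1]}_{t_{ik_n}}\,D_{t_{ik_n}}^{(1)}C$. Since $e^n_{t_{ik_n}}(u)$ and $b^{[1]}_{t_{ik_n}}$ are $\mathcal{F}_{t_{ik_n}}$-measurable, $D_u^{(1)}$ for $u$ in the bin acts nontrivially only on $\Xi$ (producing an extra factor $(-u^2/2+\im v)D_u^{(1)}C$, which yields the $(-u^2/2+\im v)^2$-term responsible for $\mathcal H_4$) and on $D_{t_{ik_n}}^{(1)}C$ (producing the second Malliavin derivative $(D_{t_{ik_n}}^{(1)})^2 C$, responsible for $\mathcal H_5$).

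Summing over $i$ and passing to the Riemann limit, the bin-integrals of $g_n$-products supply the combinatorial constants: $\int g_n(s)G_n(s)\,ds = \tfrac{1}{2}(\int g_n)^2 \sim \tfrac{1}{2}\Delta_n\theta^2\psi_3^2$ (where $G_n(v)=\int_v^{k_n\Delta_n}g_n(w)\,dw$), while the number of bins $d_n\sim (\theta\sqrt{\Delta_n})^{-1}$ converts $\sum_i f(X_{t_{ik_n}})$ into $(\theta\sqrt{\Delta_n})^{-1}\int_0^1 f(X_t)\,dt$. Combined with the prefactor $2\Delta_n^{-1/4}/\psi_2^n$ and the normalisation $r_n = \Delta_n^{1/4}$, one checks that the powers of $\Delta_n$ cancel, and the resulting coefficients are exactly $4\theta^3\psi_3^2/\psi_2$ in front of $(\int_t^1 c'(X_r)D_t^{(1)} X_r\,dr)^2$ and $2\theta^2\psi_3^2/\psi_2$ in front of the second-derivative term, after applying the identities $D_t^{(1)}C = 2\theta\int_t^1 c'(X_r)D_t^{(1)} X_r\,dr$ and $(D_t^{(1)})^2 C = 2\theta\int_t^1 [c''(X_r)(D_t^{(1)} X_r)^2+c'(X_r)(D_t^{(1)})^2 X_r]\,dr$. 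This yields (\ref{sigmaupper2}); the same reasoning applied to $\partial^\alpha\Phi_n$ gives (\ref{characterize.upperbar}) for every $\alpha\in\mathbb{Z}_+^2$.

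The most delicate step is the second IBP: one must rigorously justify freezing $e_s^n(u)$ and $D_s^{(1)}C$ on each bin, replacing $F$ by $C$, and neglecting both the truncation derivatives $D^{(j)}\psi_n$ and the inner $dB$-pieces (which are of lower order because $D_s^{(2)}C=0$). Equally delicate is the book-keeping of the bin-integrals of $g_n$ and $h_n$ and the Riemann-limit passage so that precisely the constants $\theta^3\psi_3^2/\psi_2$ and $\theta^2\psi_3^2/\psi_2$ emerge in $\mathcal H_4$ and $\mathcal H_5$.
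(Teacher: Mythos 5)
Your proposal is correct and follows essentially the same route as the paper: both characterize $\overline{\sigma}$ through $\lim_n r_n^{-1}\Phi_n$, apply the Malliavin duality twice to the iterated stochastic integral $\int_0^1 e^n_t(u)\,d(\im u M^n_t)$ on each pre-averaging bin, discard the $D^{(2)}$- and truncation-derivative contributions (since $D^{(2)}C=0$), and recover $\mathcal{H}_4$ from the term where both derivatives hit $\Psi(u,v)$ and $\mathcal{H}_5$ from the second Malliavin derivative of $C$, with the same simplex factor $\tfrac12(\int g_n)^2\sim\tfrac12\theta^2\psi_3^2\Delta_n$ and Riemann-sum bookkeeping. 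The only cosmetic difference is the order of operations (the paper freezes $e^n_t$ at $e^n_{t_{ik_n-1}}$ via the $\check U_n/\hat U_n$ split before performing both integrations by parts, whereas you interleave the freezing between the two IBP steps), which does not affect the argument.
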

\begin{proof} See Section \ref{sec7}. \end{proof}

\subsection{The asymptotic expansion of the pre-averaging estimator}
In view of \eqref{sigmalower2} and \eqref{sigmaupper2}, we observe that the full random symbol $\sigma=\underline \sigma+\overline \sigma$ is given by
\begin{equation}\label{sigma2}
\sigma(z, \im u, \im v)=\sum_{j=1}^8 c_j(z) (\im u)^{m_j} (\im v)^{n_j}
\end{equation}
where
\begin{align*}
m_1&=1,~~  n_1=0,~~  c_1(z)=\mathcal{H}_2,~~~~~~ m_2=0,~~ n_2=1,~~ c_2(z)=\mathcal{H}_3(z), \\
m_3&=2,~~  n_3=0,~~  c_3(z)=\mathcal{H}_1(z),~~ m_4=1,~~ n_4=1,~~ c_4(z)=\mathcal{H}_5,  \\
m_5&=3,~~  n_5=0, ~~ c_5(z)=\frac{1}{2} \mathcal{H}_5, ~~~~m_6=1,~~ n_6=2,~~ c_6(z)=\mathcal{H}_4, \\
m_7&=3, ~~ n_7=1, ~~ c_7(z)=\mathcal{H}_4,            ~~~~~~m_8=5,~~ n_8=0, ~~c_8(z)=\frac{1}{4} \mathcal{H}_4.
\end{align*}
We continue as in Section \ref{subsec:AsymptoticExpansion} and define the density $p_n(z,x)$ by 
\begin{align}
p_n(z,x)=&\phi(z;0,x) p^C(x)\\
&+\Delta_n^{1/4} \sum_{j=1}^8  (-d_z)^{m_j} (-d_x)^{n_j} 
\left(\phi(z;0,x)p^C(x) \E[c_j(z)|C=x] \right) \nonumber,
\end{align}
according to (\ref{density.general}).
For $h \in \mathcal{E}(K, \gamma),$ we also recall the notation
$$\Delta_n(h)=\left|\E[h(Z_n^{\star}, F_n)]-\int h(z,x) p_n(z,x) dz dx \right|.$$
The following theorem is the main result of this article. 

\begin{theorem}\label{MainTh} 
Suppose that Condition $[V]$ is satisfied.
Let $K>0, \gamma>0$. 
Then
$$\sup_{h \in \mathcal{E}(K, \gamma)} \Delta_n(h)=o(\Delta_n^{1/4})$$
\end{theorem}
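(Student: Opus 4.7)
The plan is to invoke Theorem \ref{YOSH13MT} with $r_n=\Delta_n^{1/4}$ applied to the pair $(M_n+\Delta_n^{1/4}N_n,F_n)$; by (\ref{znnegl}) and Proposition \ref{StochExpan}, this is equivalent to validating the expansion for $(Z_n^\star,F_n)$ modulo an $o(\Delta_n^{1/4})$ error uniformly over $h\in\mathcal{E}(K,\gamma)$. Condition $[B1]$ is precisely the joint stable convergence established in Theorem \ref{CLT2}, with $M^n$ realized as the continuous $\mathbf F$-martingale in (\ref{270814-3}). Condition $[B4]_{\ell,\mathfrak m,\mathfrak n}$ is furnished by (\ref{sigmalower2}), Proposition \ref{UpperSigmaLemma} and (\ref{sigma2}), which together identify $\underline\sigma$ and $\overline\sigma$ and show that the indices satisfy $\mathfrak n=2$, $\mathfrak m=5$, whence $\ell=\max(5,2\lfloor(\mathfrak n+3)/2\rfloor)=5$; the Malliavin smoothness of the coefficients $\mathcal H_1,\ldots,\mathcal H_5$ follows from $[V](\mathrm{i})$ together with the compactness of $\supp\mathcal{L}\{X_0\}$.

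I would next verify the Malliavin smoothness $[B2]_\ell$. Under $[V](\mathrm{i})$ the coefficients of (\ref{SDE}) lie in $C_b^\infty$, so standard SDE theory gives uniform-in-$t\in[0,1]$ Sobolev estimates for $X_t$ with respect to the Gaussian input $w=(W,B)$. Since $M_n$, $\widehat C_n$, $\widehat F_n$, $N_n$ are finite polynomial functionals of the increments $\widebar W_{t_{ik_n}}$, $\widebar\eps_{t_{ik_n}}$ and of the values of $X$ at the sampling points $t_{ik_n}$, hypercontractivity together with the explicit normalizations $\Delta_n^{-1/4}$ and $\Delta_n^{-1/2}$ yields the required $n$-uniform bounds on $\|M_n\|_{\ell+1,p}+\|\widehat C_n\|_{\ell+1,p}+\|\widehat F_n\|_{\ell+1,p}+\|N_n\|_{\ell+1,p}$.

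The main obstacle is the non-degeneracy required by $[B3]$ and $[B5]$. The truncation functional $\xi_n$ will be built from two ingredients: a control of $r_n^{a-1}|C_n-C|$ that forces $|\xi_n|\geq 1$ on the deviation event of $[B3](\mathrm{ii})$, and a lower-bound indicator for the determinant $\Delta_{(M_n,F)}$ of the Malliavin covariance matrix. The crucial observation is that the microstructure noise serves as a smoother in the first coordinate: the Malliavin derivative $D^{(2)}M_n$ in the noise direction contributes to $\langle DM_n,DM_n\rangle_{\bbH}$ a quantity that, after the normalization $\Delta_n^{-1/2}$, admits a uniform-in-$n$ strictly positive lower bound proportional to $\omega^2\psi_1/(\theta^2\psi_2)\int_0^1[(b^{[1]}(X_s))^2+\omega^2\psi_1/(\theta^2\psi_2)]ds$. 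Combined with the non-degeneracy of the law of $C=\int_0^1 a(X_t)dt$ guaranteed by $[V](\mathrm{ii})$ through a Kusuoka--Stroock-type argument (yielding $C^{-1}\in\bigcap_{p>1}L^p$), this delivers $[B3](\mathrm{iii})$. The probability bound $\bbP[|\xi_n|>1/2]=o(r_n^\kappa)$ for every $\kappa>0$ then follows from the $L^p$-concentration of $\widehat C_n$ and of the companion functionals entering $\xi_n'$, ensured by $[B2]_\ell$.

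Finally, condition $[B5]$---the quantitative decay of $r_n^{-1}|(u,v)|^{3-\ep}|\Phi_n^\alpha(u,v)|$ on $\{|(u,v)|\leq r_n^{-q}\}$---is the most delicate step and will be treated by iterated Malliavin integration by parts against the non-degenerate noise direction, exploiting the exponential-martingale structure of $e_t^n(u)$ in (\ref{EqPhin}) up to the truncation $\psi_n$. The noise-driven non-degeneracy of $\langle DM_n,DM_n\rangle_{\bbH}$ is the key input that keeps each integration by parts at bounded cost, so that $3-\ep$ derivatives suffice to offset the polynomial factor $|(u,v)|^{3-\ep}$. Assembling the above verifications and inserting them into Theorem \ref{YOSH13MT} then yields $\sup_{h\in\mathcal E(K,\gamma)}\Delta_n(h)=o(\Delta_n^{1/4})$.
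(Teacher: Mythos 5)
Your proposal is correct and follows essentially the same route as the paper: it verifies Conditions [B1]--[B5] of Theorem \ref{YOSH13MT} using Theorem \ref{CLT2} for [B1], the computed symbols \eqref{sigmalower2}--\eqref{sigma2} for [B4] (yielding $\ell=5$), and the key observation that the noise direction $D^{(2)}M_n$ --- which is orthogonal to $DC$ since $D^{(2)}C=0$ --- supplies the uniform lower bound on $\det\sigma_{(M_n,C)}$ needed for [B3] and the integrations by parts in [B5], exactly as in the paper's construction of $m_n(p)$, $s_n$ and Lemmas \ref{270815-3} and \ref{270816-6}.
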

\begin{proof} See Section \ref{sec7}. \end{proof}
This theorem is not the end of the story. From the point of view of statistical applications, the Edgeworth expansion associated to the studentized statistic 
$Z_n^{\star}/\sqrt{F_n}$ is more interesting.
Since the representation of $\sigma$ in \eqref{sigma2} is the same as in \cite[Section 6]{POYO13}, we easily obtain the second order Edgeworth expansion of  $Z_n^{\star}/\sqrt{F_n}.$

\begin{corollary}\label{MainCor}
Suppose that Condition $[V]$ is fulfilled. We define the random variables $\widetilde{\mathcal{H}}_1$ and
$\widetilde{\mathcal{H}}_3$ via the identity $\mathcal{H}_k(z)=z\widetilde{\mathcal{H}}_k$, $k=1,3$.
Then the second order Edgeworth expansion of  $Z_n^{\star}/\sqrt{F_n}$ is given by
\begin{align*}
 &p^{Z_n^{\star}/\sqrt{F_n}}(y)=\phi(y;0,1)+\Delta_n^{1/4} \phi(y;0,1) \Big[    y\Big( \E[\mathcal{H}_2 C^{-1/2}]-\frac{1}{2} 
\E[ \mathcal{H}_5 C^{-3/2}] \\
&+ \frac{3}{4} \E[ \mathcal{H}_4 C^{-5/2}] +\E[\widetilde{\mathcal{H}}_3 C^{-1/2}]-3 
\E[ \widetilde {\mathcal{H}}_1 C^{-1/2}]   \Big)\\
&+y^3\Big( \E[\widetilde{ \mathcal{H}}_1 C^{-1/2}]-\frac{1}{2}\E[\widetilde{ \mathcal{H}}_3 C^{-1/2}] \Big)   \Big] 
\end{align*}
\end{corollary}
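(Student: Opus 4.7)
The plan is to transform the joint second-order expansion $p_n(z,x)$ for $(Z_n^\star, F_n)$ provided by Theorem~\ref{MainTh} into a marginal expansion for the ratio $Z_n^\star/\sqrt{F_n}$. On the event $\{F_n>0\}$, which has probability tending to one under Condition~$[V]$, the change-of-variables formula
\[
p^{Z_n^\star/\sqrt{F_n}}(y)=\int_0^\infty p_n\bigl(y\sqrt{x},x\bigr)\sqrt{x}\,dx
\]
reduces the problem to inserting $z=y\sqrt{x}$ in the expression of $p_n$ supplied by \eqref{density.general} and \eqref{sigma2} and integrating out $x$. The leading term is immediate: using $\phi(y\sqrt{x};0,x)=\phi(y;0,1)/\sqrt{x}$ together with $\int p^C(x)\,dx=1$ gives $\phi(y;0,1)$, consistent with the CLT for the studentized statistic.

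For the correction of order $\Delta_n^{1/4}$, I would process each of the eight summands in \eqref{sigma2} separately. After the substitution $\mathcal{H}_k(z)=z\widetilde{\mathcal{H}}_k$ for $k=1,3$, every coefficient $c_j(z)$ is either constant or linear in $z$, so the $z$-derivatives $(-d_z)^{m_j}$ act on $\phi(z;0,x)$ via the Hermite polynomials associated with the Gaussian weight of variance $x$. The $x$-derivatives $(-d_x)^{n_j}$ are then handled by integration by parts against $\sqrt{x}\,dx$; after cancelling $\sqrt{x}$ against $\phi(y\sqrt{x};0,x)$, the remaining $x$-integrals become conditional moments of the form $\E[c_j\,C^{-r/2}]$ for half-integer $r$ determined by $(m_j,n_j)$. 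Collecting these moments by powers of $y$ reproduces the linear-and-cubic expression in the corollary.

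The critical bookkeeping is that the $z$-factor coming from $\mathcal{H}_k(z)=z\widetilde{\mathcal{H}}_k$ pairs with the substitution $z=y\sqrt{x}$ to produce a factor $y\sqrt{x}$, which combines with the density weights so that exactly the five moments $\E[\mathcal{H}_2 C^{-1/2}]$, $\E[\mathcal{H}_5 C^{-3/2}]$, $\E[\mathcal{H}_4 C^{-5/2}]$, $\E[\widetilde{\mathcal{H}}_1 C^{-1/2}]$ and $\E[\widetilde{\mathcal{H}}_3 C^{-1/2}]$ survive. Since the structural form of $\sigma(z,\im u,\im v)$ in \eqref{sigma2}---both the exponent pairs $(m_j,n_j)$ and the linear-in-$z$ dependence of the $c_j$---is formally identical to the one treated in \cite[Section~6]{POYO13}, the algebraic reduction carried out there transfers verbatim and delivers the prefactors $1,-\tfrac12,\tfrac34,-3,1$ on the linear-in-$y$ terms and $1,-\tfrac12$ on the cubic terms.

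The only genuine obstacle is combinatorial rather than conceptual: matching, for each summand $j$, the exponent profile $(m_j,n_j)$ and conditional expectation of $c_j$ with the exact prefactor in the final formula. This is a verification already carried out in \cite{POYO13} in the identical polynomial setting, so importing it closes the argument. A minor additional point is to justify Fubini in the change of variables, which follows from the polynomial growth of the test class $\mathcal{E}(K,\gamma)$ together with the rapid decay of $\phi(z;0,x)$ and the integrability of $p^C$ under Condition~$[V]$.
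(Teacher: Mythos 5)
Your proposal is correct and follows essentially the same route as the paper, which simply observes that the random symbol \eqref{sigma2} has the identical polynomial structure as in \cite[Section 6]{POYO13} and imports the studentization computation from there. The change-of-variables formula $p^{Z_n^{\star}/\sqrt{F_n}}(y)=\int p_n(y\sqrt{x},x)\sqrt{x}\,dx$ together with the identity $\phi(y\sqrt{x};0,x)=\phi(y;0,1)/\sqrt{x}$ and integration by parts in $x$ is exactly the mechanism underlying that cited computation (the boundary terms vanish since $C$ is bounded and bounded away from zero under Condition $[V]$), and the coefficients $1,-\tfrac12,\tfrac34,-3,1$ and $1,-\tfrac12$ do come out as you describe.
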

Note that the polynomial involved in the second order term is odd of order $3$. However, in general
it is not connected to the third order Hermite polynomial, which appears in the classical Edgeworth expansion 
in the framework of i.i.d. observations, see e.g. Theorem 2.5 in \cite{Hall92}.

\section{The case of constant volatility}
The main focus of this paper was to investigate asymptotic expansions when the estimated 
object $C$ is random, as seen in the previous sections. We remark however that Condition $[V]$(ii)
is not satisfied when $b^{[1]}(x)=b$ for all $x$. In particular, the asymptotic expansion of Corollary
\ref{MainCor} can not be directly applied to the case of constant volatility.

For the sake of completeness, we thus present the second order Edgeworth expansion in the setting
$b^{[1]}(x)=b$ identically, which relies on an earlier work \cite{YOSH97}. This article studies asymptotic expansions associated with a classical central limit theorem and it does not require 
Condition $[V]$(ii).  We note that the expression for the asymptotic density   
$p^{Z_n^{\star}/\sqrt{F_n}}$ simplifies quite a bit in the case of constant volatility. In particular, 
in view of  Remark \ref{SigmaUpperRem}, we obtain that $\overline{\sigma}=0$. Hence, 
$\mathcal{H}_4=\mathcal{H}_5=0$. The following version of Corollary \ref{MainCor} 
is a consequence of \cite[Theorem 1]{YOSH97}.

\begin{theorem}\label{ThmDeterministic}
Suppose that $b^{[1]}(x)=b$ identically and Condition $[V]$(i) holds. 
Then the second order Edgeworth expansion of  $Z_n^{\star}/\sqrt{F_n}$ is given by
\begin{align*}
 p^{Z_n^{\star}/\sqrt{F_n}}(y)&=\phi(y;0,1)\\
&+\Delta_n^{1/4} \phi(y;0,1) C^{-1/2} 
\Big[    y\Big( \E[\mathcal{H}_2]+\widetilde{\mathcal{H}}_3-3 \widetilde {\mathcal{H}}_1   \Big)
+y^3\Big( \widetilde{ \mathcal{H}}_1-\frac{1}{2}\widetilde{ \mathcal{H}}_3  \Big)   \Big].
\end{align*}
\end{theorem}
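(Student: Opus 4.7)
The plan is to reduce the problem to the classical (non-mixed) Edgeworth expansion framework of \cite{YOSH97}, exploiting the simplifications caused by constant volatility.

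First, I would check that the two explicit quantities $\mathcal{H}_4$ and $\mathcal{H}_5$ vanish. When $b^{[1]}(x)=b$, the function $c(x)=\big(b^2+\omega^2\psi_1/(\theta^2\psi_2)\big)^2$ appearing in Proposition \ref{UpperSigmaLemma} is constant, so $c'\equiv 0$ and $c''\equiv 0$. Inspecting the formulas for $\mathcal{H}_4$ and $\mathcal{H}_5$, every integrand contains $c'(X_r)$ or $c''(X_r)$, and hence both quantities are identically zero. In view of \eqref{sigmaupper2} this forces $\overline{\sigma}\equiv 0$, consistent with Remark \ref{SigmaUpperRem}, and the full random symbol collapses to $\sigma=\underline{\sigma}$, involving only $\mathcal{H}_1,\mathcal{H}_2,\mathcal{H}_3$ from \eqref{sigmalower2}.

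Next, I would observe that the asymptotic variance $C=2\theta\int_0^1 c(X_t)\,dt = 2\theta c$ is a deterministic constant under constant volatility, so $M\sim MN(0,C)$ is in fact $N(0,C)$. This is precisely the classical CLT setting treated in \cite{YOSH97}, and in this regime the non-degeneracy assumption $[V](ii)$, which was required in Theorem \ref{MainTh} to guarantee regularity of $\call\{C\}$, is no longer needed. What we still need is Malliavin non-degeneracy of $M_n$ (and smoothness/integrability of $M_n$, $F_n$, $N_n$ in the Malliavin sense); these hold exactly as in the proof of Theorem \ref{MainTh}, with the extra simplification that non-degeneracy of the Malliavin covariance of $M_n$ is inherited from the noise term $\widebar{\eps}$, whose Gaussian structure via $B$ provides the required lower bound without any appeal to spatial variation of $b^{[1]}$.

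With these ingredients, \cite[Theorem 1]{YOSH97} yields the second order joint Edgeworth expansion for $(Z_n^\star, F_n)$ with the approximating density obtained from \eqref{density.general} by discarding the $\mathcal{H}_4,\mathcal{H}_5$ contributions and replacing $p^C$ by the point mass at $C$. From this joint expansion, the Edgeworth expansion of the studentized statistic $Z_n^\star/\sqrt{F_n}$ follows by the same formal delta-method-type calculation used to derive Corollary \ref{MainCor} from Theorem \ref{MainTh}; since the symbol has the structure of \eqref{sigma2} with $c_4=\cdots=c_8=0$ and since $C$ is deterministic, every expectation of the form $\E[c_j(z)\,|\,C=x]$ becomes trivial and the factors $C^{-k/2}$ can be pulled outside, producing exactly the formula in the statement.

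The main obstacle I anticipate is the verification of the Malliavin non-degeneracy of $M_n$ uniformly in $n$ in this degenerate-volatility regime: the leading quadratic-in-Gaussian structure of $M_n$ must be analysed carefully so that the noise component $\widebar{\eps}$ alone delivers the required bound on $\Delta_{M_n}^{-1}$. Once this is in hand, the rest of the argument is a direct specialization of the machinery already developed in Sections \ref{sec4} and \ref{sec5} combined with the studentization computation inherited from \cite[Section 6]{POYO13}.
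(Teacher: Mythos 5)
Your opening observations are correct and match the paper's setup: with $b^{[1]}\equiv b$ the function $c$ is constant, so $\mathcal{H}_4=\mathcal{H}_5=0$, $\overline{\sigma}=0$, and $C$ is deterministic, which places the problem in the classical framework of \cite{YOSH97}. The genuine gap is in your middle step, where you propose to first obtain a \emph{joint} second order expansion for the pair $(Z_n^\star,F_n)$ from \cite[Theorem 1]{YOSH97} ``with $p^C$ replaced by the point mass at $C$'' and then studentize at the level of densities as in Corollary \ref{MainCor}. This route fails for two reasons. First, \cite[Theorem 1]{YOSH97} is a one-dimensional martingale expansion with auxiliary variables $(\eta,\xi)$; it does not produce a joint density expansion for a pair, which is precisely why the paper states that ``the asymptotic expansion of \cite[Theorem 1]{YOSH97} has not been obtained for the pair $(Z_n^\star,F_n)$.'' Second, no such joint density expansion can exist here: since $F_n\cp C$ with $C$ a constant, the limit law of $(Z_n^\star,F_n)$ is supported on a line in $\bbR^2$ and has no Lebesgue density, the leading term of \eqref{density.general} is meaningless when $p^C$ degenerates to a Dirac mass, and the non-degeneracy condition [B3](iii) on $\Delta_{(M_n,C)}$ is violated because $DC=0$. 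So the machinery of Theorem \ref{YOSH13MT} and the studentization computation of Corollary \ref{MainCor} (which conditions on $C=x$) cannot be specialized in the way you describe.

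The paper's proof avoids this by studentizing \emph{before} invoking the expansion theorem: it Taylor-expands $F_n^{-1/2}$ around $C^{-1/2}$ to write $Z_n^\star/\sqrt{F_n}=\mathcal{M}_n+r_n\mathcal{N}_n$ with $\mathcal{M}_n=M_n/C^{1/2}$ and $\mathcal{N}_n=N_n/C^{1/2}-M_n\widehat{F}_n/(2C^{3/2})+o_\p(1)$, uses Theorem \ref{CLT2} to identify the joint stable limit of $(\mathcal{M}_n,\mathcal{N}_n,r_n^{-1}(\mathcal{C}_n-1))$, computes $\E[\eta\,|\,\mathcal{M}=z]$ and $\E[\xi\,|\,\mathcal{M}=z]$, and plugs these into the one-dimensional formula $p(y)=\phi(y;0,1)+\tfrac12 r_n\partial_y^2(\E[\xi|\mathcal{M}=y]\phi(y;0,1))-r_n\partial_y(\E[\eta|\mathcal{M}=y]\phi(y;0,1))$ of \cite{YOSH97}. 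To repair your argument you would need to replace the joint-expansion step with this direct stochastic expansion of the studentized statistic. Your remarks on Malliavin non-degeneracy of $M_n$ coming from the noise component are consistent with the paper, but they do not address the degeneracy of the pair, which is the real obstruction.
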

\begin{proof} 
First, we notice that Condition $[V]$(i) implies the assumptions of  \cite[Theorem 1]{YOSH97}.
The asymptotic expansion of \cite[Theorem 1]{YOSH97} has not been obtained for the pair
$(Z_n^{\star},F_n)$, hence we require a stochastic expansion for the studentized statistic  
$Z_n^{\star}/\sqrt{F_n}$ directly. Denoting $r_n=\Delta_n^{1/4},$ the Taylor expansion yields
\begin{align*}
\frac{Z_n^{\star}}{\sqrt{F_n}}&=(M_n+r_n N_n)\left(\frac{1}{C^{1/2}}-\frac{(F_n-C)}{2 C^{3/2}} +o_{\p}(r_n)\right) \\
     &=:\mathcal{M}_n+r_n \mathcal{N}_n,
\end{align*}
where, recalling the notation $\widehat{F}_n=r_n^{-1}(F_n-C),$ we have
\begin{align*}
\mathcal{M}_n=\frac{M_n}{C^{1/2}} \quad \mbox{ and } \quad
\mathcal{N}_n= \frac{N_n}{C^{1/2}}-\frac{M_n \widehat{F}_n}{2 C^{3/2}}+o_{\p}(1).
\end{align*}     
Let us denote  $\mathcal{C}_n:= C_n/C$. 
Applying Theorem \ref{CLT2}, we deduce the joint stable convergence
\begin{equation*}
(\mathcal{M}_n, \mathcal{N}_n, r_n^{-1}(\mathcal{C}_n-1)) \st 
\left(\frac{M}{C^{1/2}}, \frac{N}{C^{1/2}}-\frac{M \widehat{F} }{2 C^{3/2}}, \frac{\widehat C}{C} \right)=:(\mathcal{M}, \eta, \xi),
\end{equation*}
where $\xi$ and $\eta$ follow the notation from Theorem 1 of \cite{YOSH97}. Recalling 
equations \eqref{sigmalower} and \eqref{sigmalower2}, we observe the identities 
\begin{align*}
\mathbb{E}[\eta | \mathcal{M}=z]= \frac{\E[\mathcal{H}_2]}{C^{1/2}}-\frac{z^2 \widetilde{\mathcal{H}}_3}{2 C^{1/2}} \quad \text{and} \quad
\mathbb{E}[\xi | \mathcal{M}=z]= \frac{2 z \widetilde{\mathcal{H}}_1}{C^{1/2}}.
\end{align*}
The second order Edgeworth expansion of \cite[Theorem 1]{YOSH97}  implies the formula 
$$p^{Z_n^{\star}/\sqrt{F_n}}(y)=\phi(y;0,1)+\frac{1}{2} r_n \partial_y^2 (\mathbb{E}[\xi | \mathcal{M}=y] \phi(y;0,1))-r_n \partial_y (\mathbb{E}[\eta | \mathcal{M}=y].$$
A straightforward calculation implies the desired asymptotic expansion. 
\end{proof}

\section{Example} \label{sec6}
\begin{example} Let $a>0$ and $\sigma>0.$ We consider the Black-Scholes model 
to illustrate the computations of the previous sections: 
$$dX_t= a X_t dt+\sigma X_t dW_t.$$
In this framework we have that 
\begin{align*}
b_t^{[1]}=&\sigma X_t, ~~ b_t^{[1.1]}=\sigma^2 X_t,~~ b_t^{[1.2]}=a \sigma  X_t, \\
b_t^{[2]}=& a X_t, ~~ b_t^{[2.1]}=a \sigma X_t, ~~b_t^{[2.2]}=a^2 X_t.
\end{align*}
Then, we immediately obtain that
\begin{align*}
C=&2 \theta \int_0^1 \left(\sigma^2 X_t^2+\frac{\omega^2 \psi_1}{\theta^2 \psi_2} \right)^2 dt, ~
\widetilde{\mathcal{H}}_1=\frac{ 2 \theta}{3} \frac{\int_0^1 \left(\sigma^2 X_t^2+\frac{\omega^2 \psi_1}{\theta^2 \psi_2} \right)^3 dt}{
\int_0^1 \left(\sigma^2 X_t^2+\frac{\omega^2 \psi_1}{\theta^2 \psi_2} \right)^2 dt}, ~
\widetilde{\mathcal{H}}_3=6 \widetilde{\mathcal{H}}_1,  \\
\mathcal{H}_2=& \frac{\theta \sigma[(\psi_3^2+2 \psi_4-\psi_2)\sigma^2+ 2\psi_3^2 a]}{\psi_2} \int_0^1 X_t^2 dW_t \\
&+\frac{\theta[2 \psi_3^2(\sigma^2 a+a^2)+(2 \psi_4-\psi_2)(2 \sigma^2 a+\sigma^4)] }{2 \psi_2} \int_0^1  X_t^2 dt.
\end{align*}
We observe that $D_s^{(1)} X_t$ satisfies, for $t \geq s,$
$$D_s^{(1)} X_t= a D_s^{(1)} X_t dt+\sigma D_s ^{(1)} X_t d W_t, \quad D_s^{(1)} X_s=\sigma X_s.$$
This easily implies
$$D_s^{(1)} X_t=\sigma X_s \exp \left[ (a-\sigma^2/2)(t-s)+\sigma (W_t-W_s)\right]=\sigma X_t.$$
Hence, we deduce that $c(x)=\left[\sigma^2x^2+ \frac{\omega^2 \psi_1}{\theta^2 \psi_2} \right]^2$ and
\begin{align*}
\mathcal{H}_4=& \frac{4 \theta^3 \psi_3^2 \sigma^4}{\psi_2} \int_0^1 X_t^2 
\left( \int_t^1 c'(X_r) X_r dr  \right)^2 dt, \\
\mathcal{H}_5=& \frac{2 \theta^2 \psi_3^2 \sigma^4}{\psi_2} \int_0^1 X_t^2 \left( \int_t^1 
\left[c''(X_r) X_r^2+c'(X_r) X_r \right] dr  \right) dt.
\end{align*}
Using the above quantities 
we may obtain the second order Edgeworth expansion of $Z_n^{\star}/\sqrt{F_n}$ using Corollary \ref{MainCor}.
\end{example}

%\begin{example} Let $\kappa>0$ and $\sigma>0.$ The Ornstein-Uhlenbeck process satisfies 
%$$dX_t= \kappa(\mu-X_t)dt+\sigma dW_t.$$
%This means 
%\begin{align*}
%b^{[1]}=&\sigma, ~~ b^{[1.1]}=0,~~ b^{[1.2]}=0, \\
%b^{[2]}=&\kappa(\mu-X_t), ~~ b^{[1.1]}=-\kappa \sigma, ~~b^{[1.2]}=-\kappa^2(\mu-X_t).
%\end{align*}
%Plugging the quantities, we obtain $\mathcal{H}_4=\mathcal{H}_5=0$ and
%\begin{align*}
%C=&2 \theta \left(\sigma^2+\frac{\omega^2 \psi_1}{\theta^2 \psi_2} \right)^2, ~~
%\widetilde{\mathcal{H}}_1=\frac{ 2 \theta \left(\sigma^2+\frac{\omega^2 \psi_1}{\theta^2 \psi_2} \right)}{3}, ~~
%\widetilde{\mathcal{H}}_3=4 \theta \left(\sigma^2+\frac{\omega^2 \psi_1}{\theta^2 \psi_2} \right),  \\
%\mathcal{H}_2=& \frac{2 \theta (\psi_3)^2}{\psi_2} \int_0^1 \left(  b_u^{[1]}b_u^{[2]}\right) dW_u+
%\frac{\theta (\psi_3)^2}{\psi_2} \int_0^1  \left(  b_u^{[1]}b_u^{[2.1]}+(b_u^{[2]})^2\right) du.
%\end{align*}
%Hence, the second order Edgeworth expansion becomes
%\begin{align*}
%p_n(y)&=\phi(y;0,1) +\Delta_n^{1/4} \phi(y;0,1) \Big[    y\Big( \E[\mathcal{H}_2 C^{-1/2}] +
%\E[\widetilde{\mathcal{H}}_3 C^{-1/2}]-3 
%\E[ \widetilde {\mathcal{H}}_1 C^{-1/2}]   \Big)\\
%&+y^3\Big( \E[\widetilde{ \mathcal{H}}_1 C^{-1/2}]-\frac{1}{2}\E[\widetilde{ \mathcal{H}}_3 C^{-1/2}] \Big)   \Big] \\
%&=\phi(y;0,1) +\Delta_n^{1/4} \phi(y;0,1) \Big[    y\Big( \frac{\theta (\psi_3)^2}{\psi_2} \frac{\sigma^2(1-4 \kappa+e^{-2\kappa})+\kappa \mu^2(1-e^{-4 \kappa})}{4 C^{1/2}}+ \sqrt{2 \theta} \Big) \\
%& - y^3 \frac{2 \sqrt{2 \theta}}{3}   \Big].\\
%\end{align*}
%\end{example}

\section{Proofs} \label{sec7}
%\begin{proof}[Sketch of the proof of Theorem \ref{LLN}]
%The first order approximation of $\widebar{Y} _{t _{i k_n}}$ is given by the process
%$\alpha_{t_{i k_n}}=b^{[1]}_{t _{i k_n}} \widebar{W} _{t _{i k_n}}+\widebar \eps _{t _{i k_n}}.$
%A calculation shows that 
%\begin{align*}
%\frac{1}{\psi_2^n} \sum_{i=0}^{d_n-1} \E[(\alpha_{t_{i k_n}})^2|\mathcal{F}_{t _{i k_n}}]
%=&\frac{1}{\psi_2^n} \sum_{i=0}^{d_n-1} \left[(b^{[1]}_{t _{i k_n}})^2 \psi_2^n k_n \Delta_n +\frac{\psi_1^n \omega^2}{k_n} \right] \\
%=&\sum_{i=0}^{d_n-1} \left[(b^{[1]}_{t _{i k_n}})^2 k_n \Delta_n \left]+ \frac{\psi_1^n \omega^2}{k_n^2 \Delta_n \psi_2}.
%\end{align*}
%Hence, we obtain a Riemann approximation of $V$ and a bias term involving $\omega^2.$  To remove this bias term, we use the statistic
%$$\frac{\Delta_n}{2} \sum_{i=1}^{ 1/\Delta_n } (\Delta Y_i^n)^2$$ which is a consistent estimator of $\omega^2$ (see \cite{ZHMA05}).
%\end{proof}

\subsection{Sketch of the proof of Theorem \ref{CLT}}\label{sketch.clt}
%\begin{proof}[Sketch of the proof of Theorem \ref{CLT}]
Without loss of generality, we suppose that the processes $b^{[1]}$ and $b^{[2]}$ are bounded.
This is done following a standard localization procedure, see \cite{BGJPS06} for details.
The first order approximation of $\widebar{Y} _{t _{i k_n}}$ is given by the process
$\alpha_{t_{i k_n}}=b^{[1]}_{t _{i k_n}} \widebar{W} _{t _{i k_n}}+\widebar \eps _{t _{i k_n}}$ (see also the statement of Proposition \ref{StochExpan}).
Hence, we obtain that the dominating term in $Z_n$ is
\beas 
M_n 
&=& 
\frac{\Delta_n^{-1/4}}{\psi_2^n}\sum_{i=0}^{d_n-1} 
\bigg\{
\alpha_{t_{i k_n}}^2- \E[\alpha_{t_{i k_n}}^2| \F_{t_{i k_n}}]
\bigg\}.
\eeas
%%
%We note that the squre-integrability of $\alpha_{t_{ik_n}}$ is implicitly assumed but 
%it can be validated by a suitable localization procedure. 
%%
Using the notation $$\beta_{t_{i k_n}}=\frac{\Delta_n^{-1/4}}{\psi_2^n} \left(\alpha_{t_{i k_n}}^2- \E[\alpha_{t_{i k_n}}^2| \F_{t_{i k_n}}]\right),$$
we observe that $\beta_{t_{i k_n}}$ is $\mathcal{F}_{t_{(i+1) k_n}}$-measurable and 
$\E[\beta_{t_{i k_n}}|\mathcal{F}_{t_{i k_n}}]=0$ holds. Moreover, we get
$$\sum_{i=0}^{d_n-1} \E[\left(\beta_{t_{i k_n}} \right)^2|\mathcal{F}_{t_{i k_n}}]=
\frac{2 \Delta_n^{-1/2}}{(\psi_2^n)^2} \sum_{i=0}^{d_n-1}  \left((b^{[1]}_{t _{i k_n}})^2 \psi_2^n k_n \Delta_n +\frac{\psi_1^n \omega^2}{k_n} \right)^2 \cp C.$$
Therefore, the first claim follows from Theorem IX.7.28 of \cite{JASH03}. As for the second claim, we again observe that the first order approximation of $\widebar{Y} _{t _{i k_n}}$ is given by the process
$\alpha_{t_{i k_n}}.$ Moreover, we see that the main term in $F_n$ satisfies
$$\frac{2 \Delta_n^{-1/2}}{3 (\psi_2^n)^2} \sum_{i=0}^{d_n-1} \E[\left(\alpha_{t_{i k_n}} \right)^4|\mathcal{F}_{t_{i k_n}}]
=2\Delta_n^{-1/2} \sum_{i=0}^{d_n-1}  
\left((b^{[1]}_{t _{i k_n}})^2 k_n \Delta_n +\frac{\psi_1^n \omega^2}{\psi_2^nk_n} \right)^2
\cp C.$$
\qed

\subsection{Proofs of Proposition \ref{StochExpan} and Theorem \ref{CLT2}}
\begin{proof}[Proof of Proposition \ref{StochExpan}]
Proceeding as the in previous section, we apply a localization procedure and suppose that all processes of the form $b^{[k_1 \ldots k_m]},$ $k_i=1,2,$ are bounded.
We will apply the following version of Burkholder's inequality several times. For any process $U$ as in \eqref{SDE} with bounded drift and  diffusion terms and 
any $p \geq 0,$ we have
\begin{align}\label{Burkholder}
\E[|U_t-U_s|^p] \leq C_p|t-s|^{p/2}.
\end{align}
Hence, we may apply this result for the following terms: $b^{[1]},$ $b^{[2]},$ $b^{[1.1]},$ $b^{[1.2]},$ $b^{[2.1]}$ and $b^{[2.2]}.$
We expand and denote 
\begin{align}
 \Delta_n^{-1/4} V_n&= \frac{\Delta_n^{-1/4}}{\psi_2^n} \sum_{i=0}^{d_n-1}  \left( \widebar X_{t_{i k_n}} \right)^2+ 
\frac{2 \Delta_n^{-1/4}}{\psi_2^n} \sum_{i=0}^{d_n-1} \widebar X_{t_{i k_n}} \widebar \eps_{t_{i k_n}} \nonumber \\
 &+\frac{\Delta_n^{-1/4}}{\psi_2^n} \sum_{i=0}^{d_n-1}  \left[\widebar \eps_{t_{i k_n}}^2 -\frac{\psi_1^n \omega^2}{k_n} \right]
  +\frac{\Delta_n^{-1/4} \psi_1^n d_n}{\psi_2^n k_n }\left[\omega^2 -\frac{\Delta_n }{2} \sum_{i=1}^{ 1/\Delta_n } (\Delta Y_i^n)^2 \right] \nonumber \\
 &=: R_n^{(1)}+ R_n^{(2)}+R_n^{(3)}+ \Delta_n^{1/4} N_{n,6}+o_{\p}(\Delta_n^{1/4}). \label{Rn3}
 \end{align}
Let's look at the term $R_n^{[2]}$ first. Due to  
\begin{align*}
\widebar X_{t_{i k_n}}=&b_{t_{i k_n}}^{[1]} \widebar W_{t_{i k_n}}+\int_{t_{i k_n}}^{t_{(i+1)k_n}}  
\left[ (b_u^{[1]}-b_{t_{i k_n}}^{[1]}) dW(i,u) + g_n(u-t_{i k_n}) b_u^{[2]} du \right],
\end{align*}
we obtain 
\begin{align}
R_n^{(2)}=&\frac{2 \Delta_n^{-1/4}}{\psi_2^n} \sum_{i=0}^{d_n-1} \left \{  
 b_{t_{i k_n}}^{[1]} \widebar W_{t_{i k_n}}  \widebar \eps_{t_{i k_n}}
+ \int_{t_{i k_n}}^{t_{(i+1)k_n}}  
(b_u^{[1]}-b_{t_{i k_n}}^{[1]}) dW(i,u) \times \widebar \eps_{t_{i k_n}} \right \} \nonumber \\
&+\frac{2 \Delta_n^{-1/4}}{\psi_2^n} \sum_{i=0}^{d_n-1} \int_{t_{i k_n}}^{t_{(i+1)k_n}}  
 g_n(u-t_{i k_n}) b_u^{[2]} du  \times \widebar \eps_{t_{i k_n}} \nonumber \\
=& \frac{2 \Delta_n^{-1/4}}{\psi_2^n} \sum_{i=0}^{d_n-1}  b_{t_{i k_n}}^{[1]} \widebar W_{t_{i k_n}}  \widebar \eps_{t_{i k_n}} \nonumber \\
&+ \frac{2 \Delta_n^{-1/4}}{\psi_2^n}\sum_{i=0}^{d_n-1} \left(b_{t_{i k_n}}^{[1.1]} \int_{t_{i k_n}}^{t_{(i+1)k_n}}  
\int_{t_{i k_n}}^{u}  dW_s dW(i,u)+b_{t_{i k_n}}^{[2]} \psi_3^n k_n \Delta_n \right) \widebar\eps_{t_{ik_n}} \nonumber \\
&+o_{\p}(\Delta_n^{1/4}) \nonumber \\
=&: R_n^{(2.1)}+\Delta_n^{1/4}N_{n,5}+o_{\p}(\Delta_n^{1/4}). \label{Rn2}
\end{align}
In above computation, $o_{\p}(\Delta_n^{1/4})$-error terms were obtained by applying \eqref{Burkholder} to the processes $b^{[1.1]}$ and $b^{[2]}.$
Let's provide some details. \eqref{Burkholder} applied to the process $b^{[2]}$ implies
$$\E[|b_u^{[2]}-b_{t_{i k_n}}^{[2]}|^p]\leq C_p \Delta_n^{p/4}.$$
for each $0 \leq i \leq d_n-1$ and $t_{i k_n} \leq u \leq t_{(i+1) k_n}.$
Then, we obtain
\begin{align*}
&\int_{t_{i k_n}}^{t_{(i+1)k_n}}  
 g_n(u-t_{i k_n}) (b_u^{[2]}-b_{t_{i k_n}}^{[2]}) du  \times \widebar \eps_{t_{i k_n}}=O_{\p}(\Delta_n) \mbox{ and } \\
& \frac{2 \Delta_n^{-1/4}}{\psi_2^n} \sum_{i=0}^{d_n-1} \int_{t_{i k_n}}^{t_{(i+1)k_n}}  
 g_n(u-t_{i k_n}) (b_u^{[2]}-b_{t_{i k_n}}^{[2]}) du  \times \widebar \eps_{t_{i k_n}}=O_{\p}(\Delta_n^{1/2}),
\end{align*}
where we used the independece of $X$ and $\eps,$ and the i.i.d assumption on $\eps$ for the second result above.

Now, we pass to the expansion of $R_n^{(1)}.$ Analogous to $W(i,t)$ defined in \eqref{processW(i,t)}, we define a new process
\begin{align*}
X(i, t)&=\int_{t_{i k_n}}^{t} g_n(u-t_{i k_n}) b_u^{[1]} dW_u+\int_{t_{i k_n}}^{t} g_n(u-t_{i k_n}) b_u^{[2]} du.
\end{align*}
We remark that $$\widebar X_{t_{i k_n}}=X(i, t_{(i+1) k_n}).$$
Now, Ito's lemma yields 
\begin{align*}
(\widebar X_{t_{i k_n}})^2=& 2 \int_{t_{i k_n}}^{t_{(i+1)k_n}}  X(i, u) (b_u^{[1]} dW(i,u)+g_n(u-t_{i k_n}) b_u^{[2]} du)
 \\ &+\int_{t_{i k_n}}^{t_{(i+1)k_n}} (g_n(u-t_{i k_n}))^2 (b_u^{[1]})^2 du . 
\end{align*}
Therefore, we get
\begin{align*}
R_n^{(1)}= &\frac{2 \Delta_n^{-1/4}}{\psi_2^n} \sum_{i=0}^{d_n-1} \int_{t_{ik_n}}^{t_{(i+1)k_n}} X(i, u) b_u^{[1]} dW(i,u) \\
&+ \frac{2 \Delta_n^{-1/4}}{\psi_2^n} \sum_{i=0}^{d_n-1} \int_{t_{ik_n}}^{t_{(i+1)k_n}} X(i, u) g_n(u-t_{ik_n}) b_u^{[2]} du \\
&+\frac{\Delta_n^{-1/4}}{\psi_2^n} \sum_{i=0}^{d_n-1} \int_{t_{i k_n}}^{t_{(i+1) k_n}} (g_n(u-i k_n \Delta_n))^2 (b_u^{[1]})^2 du \\
=&: R_n^{(1.1)}+R_n^{(1.2)}+R_n^{(1.3)}.
\end{align*} 
Here, $R_n^{(1.1)}$ is decomposed as 
\begin{align}
R_n^{(1.1)}&= \frac{2 \Delta_n^{-1/4}}{\psi_2^n} \sum_{i=0}^{d_n-1} \int_{t_{i k_n}}^{t_{(i+1)k_n}} \int_{t_{ik_n}}^{u}  b_s^{[1]} dW(i,s)b_u^{[1]} dW(i,u) \nonumber\\
&+ \frac{2 \Delta_n^{-1/4}}{\psi_2^n} \sum_{i=0}^{d_n-1} \int_{t_{i k_n}}^{t_{(i+1)k_n}} \int_{t_{ik_n}}^{u} g_n(s-t_{ik_n}) b_s^{[2]} ds ~b_u^{[1]} dW(i,u) \nonumber \\
&= \frac{2 \Delta_n^{-1/4}}{\psi_2^n} \sum_{i=0}^{d_n-1} (b_{t_{i k_n}}^{[1]})^2 \int_{t_{i k_n}}^{t_{(i+1) k_n}} W(i,u)  dW(i,u) \nonumber \\
& +\frac{2 \Delta_n^{-1/4}}{\psi_2^n} \sum_{i=0}^{d_n-1} b_{t_{i k_n} }^{[1]} b_{t_{i k_n}}^{[1.1]} \int_{t_{i k_n}}^{t_{(i+1)k_n}} \int_{t_{i k_n}}^{u} (W_s-W_{t_{i k_n}}) dW(i,s) dW(i,u) \nonumber \\
&+ \frac{2 \Delta_n^{-1/4}}{\psi_2^n} \sum_{i=0}^{d_n-1} b_{t_{i k_n}}^{[1]} b_{t_{i k_n}}^{[2]} \int_{t_{i k_n}}^{t_{(i+1) k_n}} \int_{t_{i k_n}}^{u} g_n(s-t_{i k_n}) ds    dW(i,u) \nonumber \\
& +\frac{2 \Delta_n^{-1/4}}{\psi_2^n} \sum_{i=0}^{d_n-1} b_{t_{i k_n}}^{[1]} b_{t_{i k_n}}^{[1.1]} \int_{t_{i k_n}}^{t_{(i+1) k_n}} \int_{t_{i k_n}}^{u}  dW_s W(i,u) dW(i,u)
+ o_{\p}(\Delta_n^{1/4})  \nonumber \\
&=: R_n^{(1.1.1)}+R_n^{(1.1.2)}+R_n^{(1.1.3)}+R_n^{(1.1.4)}+ o_{\p}(\Delta_n^{1/4}). \label{Rn11}
\end{align}
Using \eqref{Rn3}, \eqref{Rn2} and \eqref{Rn11}, we immediately observe that 
\begin{align*}
R_n^{(1.1.1)}=&\frac{\Delta_n^{-1/4}}{\psi_2^n} \sum_{i=0}^{d_n-1} (b_{t_{i k_n}}^{[1]} \widebar W_{t_{i k_n}})^2-  \E[ (b_{t_{i k_n}}^{[1]}\widebar W_{t_{i k_n}})^2 | \F_{t_{i k_n}}] \mbox{ and } \\
M_n=&R_n^{(1.1.1)}+R_n^{(2.1)}+R_n^{(3)}.
\end{align*} 
Regarding the term $R_n^{(1.2)},$ we proceed similarly and obtain 
\begin{align}
&R_n^{(1.2)}= \frac{2 \Delta_n^{-1/4}}{\psi_2^n} \sum_{i=0}^{d_n-1} b_{t_{i k_n}}^{[1]} \int_{t_{i k_n}}^{t_{(i+1)k_n}} W(i,u)g_n(u-t_{i k_n}) b_u^{[2]} du \nonumber \\
&+ \frac{2 \Delta_n^{-1/4}}{\psi_2^n} \sum_{i=0}^{d_n-1} (b_{t_{i k_n}}^{[2]})^2 \int_{t_{i k_n}}^{t_{(i+1)k_n}} \int_{t_{i k_n}}^{u} g_n(s-t_{i k_n})ds~ g_n(u-t_{i k_n})  du \nonumber 
%\\&
+o_{\p}(\Delta_n^{1/4}) \nonumber\\
&=\frac{2 \Delta_n^{-1/4}}{\psi_2^n} \sum_{i=0}^{d_n-1} b_{t_{i k_n}}^{[1]} b_{t_{i k_n}}^{[2]} \int_{t_{i k_n}}^{t_{(i+1)k_n}}W(i,u)g_n(u-t_{i k_n})du \nonumber \\ 
&+\frac{2 \Delta_n^{-1/4}}{\psi_2^n} \sum_{i=0}^{d_n-1} b_{t_{i k_n}}^{[1]} b_{t_{i k_n}}^{[2.1]} \int_{t_{i k_n}}^{t_{(i+1)k_n}}
\int_{t_{i k_n}}^{u} dW_s W(i,u)g_n(u-t_{i k_n})du \nonumber \\ 
&+ \frac{\Delta_n^{-1/4}}{\psi_2^n} \sum_{i=0}^{d_n-1} (b_{t_{i k_n}}^{[2]})^2 \left( \int_{t_{i k_n}}^{t_{(i+1)k_n}} g_n(u-t_{i k_n})du \right)^2+o_{\p}(\Delta_n^{1/4}) \nonumber \\
&=: R_n^{(1.2.1)}+\Delta_n^{1/4} N_{n,2}+\Delta_n^{1/4} N_{n,3}+o_{\p}(\Delta_n^{1/4}). \label{Rn12}
\end{align}
In view of \eqref{Rn11} and \eqref{Rn12}, we note that Ito's product rule yields 
\begin{align}
 R_n^{(1.1.3)}+R_n^{(1.2.1)}=&\frac{2 \Delta_n^{-1/4}}{\psi_2^n} \sum_{i=0}^{d_n-1} b^{[1]}_{t _{i k_n}}  
 b_{t_{i k_n}}^{[2]} \widebar{W} _{t _{i k_n}}
 \int_{t_{i k_n}}^{t_{(i+1)k_n}}g_n(u-t_{i k_n})du \nonumber \\
=& \frac{2 \psi_3^n k_n \Delta_n^{3/4}}{\psi_2^n} \sum_{i=0}^{d_n-1} b^{[1]}_{t _{i k_n}}  
 b_{t_{i k_n}}^{[2]} \widebar{W} _{t _{i k_n}}
  =: \Delta_n^{1/4} N_{n,1}^{(1)}. \label{Nn11}
\end{align}
Finally, we look at $R_n^{(1.3)}$ and $V.$ For both terms we use
 $$(b_u^{[1]})^2=(b_{t_{i k_n}}^{[1]})^2+ 2 \int_{t_{i k_n}}^u  b_s^{[1]} \left( b_s^{[1.1]}dW_s+b_s^{[1.2]} ds \right)+ \int_{t_{i k_n}}^{u} (b_s^{[1.1]})^2 ds.$$
Then, recalling the definition of $Z_n^{\star}$ in \eqref{znstar} and the estimate \eqref{znnegl}, 
we get
\begin{align}
& R_n^{(1.3)}-\Delta_n^{-1/4} \int_{0}^{d_n k_n \Delta_n} (b_u^{[1.1]})^2 du \nonumber \\
&= \frac{2 \Delta_n^{-1/4}}{\psi_2^n} \sum_{i=0}^{d_n-1} b_{t_{i k_n} }^{[1]} b_{t_{i k_n}}^{[1.1]}
\int_{t_{i k_n}}^{t_{(i+1) k_n}} (g_n^2(u-t_{i k_n})-\psi_2^n) (W_u -W_{t_{i k_n}})  du \nonumber \\
&+ \frac{\Delta_n^{-1/4}}{\psi_2^n}\sum_{i=0}^{d_n-1}  \left[2 b_{t_{i k_n} }^{[1]} b_{t_{i k_n}}^{[1.2]}+(b_{t_{i k_n}}^{[1.1]})^2 \right] 
\int_{t_{i k_n}}^{t_{(i+1) k_n}} ((g_n(u-t_{i k_n}))^2-\psi_2^n)
u du \nonumber \\
&=: \Delta_n^{1/4}(N_{n,1}^{(2)}+N_{n,4}). \label{Nn12}
\end{align}
Using \eqref{Rn11}, \eqref{Nn11}, \eqref{Nn12} and Ito's formula, we obtain
$$R_n^{(1.1.2)}+R_n^{(1.1.4)}+\Delta_n^{1/4}\left(N_{n,1}^{(1)}+N_{n,1}^{(2)} \right)=\Delta_n^{1/4} N_{n,1}.$$
This finishes the proof of Proposition \ref{StochExpan}.
\end{proof}

\begin{proof}[Proof of Theorem \ref{CLT2}]
We write 
\begin{align*}
M_n&=\sum_{i=0}^{d_n-1} \chi_{i,1}^n, ~~~
N_n=\sum_{k=2}^4 N_{n,k}+\sum_{i=0}^{d_n-1} \chi_{i,2}^n, \\
\widehat C_n&=K_n  + \sum_{i=0}^{d_n-1} \chi_{i,3}^n, ~~~
\widehat F_n=L_n  +\sum_{i=0}^{d_n-1} \chi_{i,4}^n, 
\end{align*}
where
\begin{align*}
\chi_{i,1}^n&=\frac{\Delta_n^{-1/4}}{\psi_2^n}\left(\alpha_{t_{ik_n}}^2-\E[\alpha_{t_{ik_n}}^2| \mathcal{F}_{t_{ik_n}}] \right), \\
\chi_{i,2}^n&=\frac{2 \Delta_n^{-1/2}}{\psi_2^n}  b_{t_{i k_n}}^{[1]} \int_{t_{i k_n}}^{t_{(i+1) k_n}} \nu_i^n(u) dW_u+  \\
& \frac{2 \Delta_n^{-1/2}}{\psi_2^n} \widebar \eps_{t_{i k_n}}  
\left( \psi_3^n k_n \Delta_n b_{t_{i k_n}}^{[2]}   +b_{t_{i k_n}}^{[1.1]} \int_{t_{i k_n}}^{t_{(i+1)k_n}}  \int_{t_{i k_n}}^{u}  dW_s dW(i,u)  \right),\\
&+ \frac{\Delta_n^{-1/2} \psi_1^n}{2 \psi_2^n (k_n)^2} \sum_{j=1}^{ k_n } 
\left[2 \omega^2-(\Delta \eps_{ i k_n+j} ^n)^2 \right],\\ 
\chi_{i,3}^n&=\frac{4 \Delta_n^{-3/4}}{(\psi_2^n)^2}  \int_{t_{i k_n-1}}^{t_{(i+1) k_n-1}}
\left(\alpha^2(i, u )- \E[\alpha^2(i, u )| \mathcal{F}_{t_{ik_n}}] \right)[d \alpha(i, u)]^2,  \\
\chi_{i,4}^n&= \frac{2 \Delta_n^{-3/4}}{3 (\psi_2^n)^2} \left(\alpha_{t_{ik_n}}^4-\E[\alpha_{t_{ik_n}}^4| \mathcal{F}_{t_{ik_n}}] \right)
,\\
K_n&=\Delta_n^{-1/4} \left(\frac{4 \Delta_n^{-1/2}}{(\psi_2^n)^2} \sum_{i=0}^{d_n-1} \int_{t_{i k_n-1}}^{t_{(i+1) k_n-1}}\E[\alpha^2(i, u )| \mathcal{F}_{t_{ik_n}}][d \alpha(i, u)]^2-C \right), \\
L_n&=\frac{2 \Delta_n^{-3/4}}{3 (\psi_2^n)^2} \sum_{i=0}^{d_n-1} \left((\widebar Y_{t_{i k_n}})^4-\alpha_{t_{ik_n}}^4\right)\\
&+
\Delta_n^{-1/4} 
\left(\frac{2 \Delta_n^{-1/2}}{3 (\psi_2^n)^2} \sum_{i=0}^{d_n-1} \E[\alpha_{t_{ik_n}}^4| \mathcal{F}_{t_{ik_n}}]- C \right),
\end{align*}
and the quantity $[d \alpha(i, u)]^2$ has been introduced in \eqref{dalpha}. 
We observe that $K_n \cp 0$ and $L_n \cp 0.$ We recall that $\alpha_{t_{ik_n}}$, conditionally on $\mathcal{F}_{t_{i k_n}},$ is distributed as
$$N\left(0, \psi_2^n k_n \Delta_n (b_{t_{i k_n}}^{[1]})^2  + \frac{\psi_1^n \omega^2}{k_n} \right).$$  
Then, for $\chi_{i}^n=(\chi_{i,1}^n, \chi_{i,2}^n, \chi_{i,3}^n, \chi_{i,4}^n)$ we obtain
\begin{align*}
&\sum_{i=0}^{d_n-1} \E[\chi_{i,k}^n \chi_{i,l}^n| \mathcal{F}_{t_{ik_n}}] \cp \int_0^1 \Sigma_s^{kl} ds, \\
&\sum_{i=0}^{d_n-1} \E[\chi_{i,k}^n (W_{t_{(i+1) k_n}}-W_{t_{i k_n}})| \mathcal{F}_{t_{ik_n}}] \cp \int_0^1 \mu_s^k ds \\
&\sum_{i=0}^{d_n-1} \E[\chi_{i,k}^n (B_{t_{(i+1) k_n}}-B_{t_{i k_n}}) | \mathcal{F}_{t_{ik_n}}] \cp 0
\end{align*}
for $1 \leq k, l \leq 4$, where the stochastic processes $\Sigma^{kl}$ and $\mu^k$ are introduced in Theorem \ref{CLT2}. 
For any $\delta>0$ and $1 \leq k \leq 4,$ we observe that
$$\sum_{i=0}^{d_n-1} \E[|\chi_{i,k}^n|^2 \mathbbm{1}_{ \{ |\chi_{i,k}^n| > \delta \}} | \mathcal{F}_{t_{ik_n}} ]
\leq \delta^{-2} \sum_{i=0}^{d_n-1} \E[|\chi_{i,k}^n|^4 | \mathcal{F}_{t_{ik_n}} ]
\leq C_{\epsilon} k_n \Delta_n \to 0.$$
Now, let $Q$ be a bounded continuous martingale with $\langle W, Q \rangle 
=\langle B, Q \rangle=0.$
A standard argument (see e.g. \cite{JLMP09}) shows that $$\E[\chi_{i,k}^n (Q_{t_{(i+1) k_n}}-Q_{t_{i k_n}}) | \mathcal{F}_{t_{ik_n}}]=0.$$
This implies $$\sum_{i=0}^{d_n-1} \E[\chi_{i,k}^n (Q_{t_{(i+1) k_n}}-Q_{t_{i k_n}}) | \mathcal{F}_{t_{ik_n}}] \to 0.$$
Then, we are done using Theorem IX.7.28 of \cite{JASH03}.
\end{proof}

\subsection{Nondegeneracy of $C$}
For
$$C=2 \theta \int_0^1 \left((b_u^{[1]})^2+ \frac{\omega^2 \psi_1}{\theta^2 \psi_2} \right)^2du,$$
we get
$$D_r^{(1)} C=8 \theta \int_r^1 \left((b_u^{[1]})^2+ \frac{\omega^2 \psi_1}{\theta^2 \psi_2} \right) b_u^{[1]} (b_u^{[1]})'D_r^{(1)} X_u du$$
and 
$D_r^{(2)} C=0$ for $r\in[0,1]$.
We write 
$$
\sigma_{22}(t)=\int_0^t 
\left(
8 \theta \int_r^1 \left((b_u^{[1]})^2+ \frac{\omega^2 \psi_1}{\theta^2 \psi_2} \right) b_u^{[1]} (b_u^{[1]})' D_r^{(1)} X_u du 
\right)^2dr.
$$
Then $C$'s Malliavin covariance $\sigma_{C} = \sigma_{22}(1)$, i.e. $\|DC\|_{\mathbb H} ^2=\sigma_{22}(1)$. 

We will work with the two-dimensional stochastic process 
$\bar{X}_t=(X^{(1)}_t,X^{(2)}_t)$ defined by the stochastic integral equation
\beas
\bar{X}_t
&=& 
\bar{X}_0+
\int_0^tV_1(\bar{X}_s)\circ dW_s+\int_0^tV_0(\bar{X}_s)ds
\eeas
for $t\in[0,1]$, where $\circ$ denotes the Stratonovich integral. 
The coefficients are given by 
\beas 
V_1(x)\>=\>\left[\begin{array}{c}b^{[1]}(x^{(1)})\\0\end{array}\right],\qquad
V_0(x)\>=\>\left[\begin{array}{c}\tilde{b}^{[2]}(x^{(1)})\\ a(x^{(1)})\end{array}\right]
\eeas
for $x=(x^{(1)},x^{(2)})$, 
$\tilde{b}^{[2]}=b^{[2]}-2^{-1}b^{[1]}(b^{[1]})'$ and  
\beas 
a(x^{(1)}) &=& 2\theta \left((b^{[1]}(x^{(1)}))^2+ \frac{\omega^2 \psi_1}{\theta^2 \psi_2} \right)^2
\eeas
already defined.  Condition [V] implies the H\"ormander condition
\beas 
\text{Lie}[V_0;V_1](x^{(1)},0)\>=\>\bbR^2\qquad(\forall x^{(1)}\in\text{supp}\call\{X_0\})
\eeas
where $\mbox{Lie}[V_0;V_1],$ the Lie algebra generated by $V_1$ and $V_0,$ is defined in the following way.
The Lie bracket of $V$ and $W$ is given by
$$[V, W](x)=\mathcal{D} V(x) W(x)-\mathcal{D} W(x) V(x),$$
where $\mathcal{D} V(x)$ is the derivative of $V$ at $x.$ Then, we define the Lie algebra by 
$\mbox{Lie}[V_0;V_1]=\text{span}\left(\bigcup_{j=0}^\infty\Sigma_j\right)$, where 
 $\Sigma_0=\{V_1\}$ and $\Sigma_j=\{[V,V_i];\> V\in\Sigma_{j-1},\>i=0,1\}$ ($j\geq1$). 
 %with the Lie bracket $[\cdot,\cdot]$. $\mbox{Lie}[V_0;V_1](x)$ is $\mbox{Lie}[V_0;V_1]$ evaluated at $x$.  
It is then possible to deduce that 
\bea\label{20150817-1}
\sigma_{22}(t)^{-1}&\in&\bigcap_{p>1}L^p
\eea
for every $t\in(0,1]$. 
For details, we refer the reader to \cite{POYO13,yoshida2012asymptotic,YOSH13}.

%%%%%%%%%%%%%%%%%%%%%%%
\subsection{Setting $s_n$ and a local nondegeneracy of $(M^n_t,C)$}
The Malliavin covariance matrix of $(M^n_t,C)$ is denoted by 
\beas 
\sigma_{(M^n_t,C)}
&=& 
\left[ \begin{array}{cc} \sigma_{11}(n,t)&\sigma_{12}(n,t)\\
\sigma_{21}(n,t)&\sigma_{22}(1) \end{array}
\right].
\eeas
Recall that 
\beas 
\alpha(i,t) &=& b^{[1]}_{t_{ik_n}}W(i,t)+\ep(i,t),
\\
W(i,t)&=& \int_{t_{ik_n}\wedge t}^{t_{(i+1)k_n}\wedge t}g_n(u-t_{ik_n})dW_u,
\\
\ep(i,t) &=& \omega\Delta_n^{-1/2}
\int_{t_{ik_n-1}\wedge t}^{t_{(i+1)k_n-1}\wedge t}-h_n(u-t_{ik_n})dB_u.
\eeas
Denoting by $\langle U \rangle$ the quadratic variation process of $U$, we conclude that  
\beas 
\langle \alpha(i,\cdot)\rangle_t 
&=& 
(b^{[1]}_{t_{ik_n}})^2\int_{t_{ik_n}\wedge t}^{t_{(i+1)k_n}\wedge t}g_n(u-t_{ik_n})^2du
\\&&
+\omega^2\Delta_n^{-1}\int_{t_{ik_n-1}\wedge t}^{t_{(i+1)k_n-1}\wedge t}h_n(u-t_{ik_n})^2du. 
\eeas
and 
\beas 
D_r^{(1)}W(i,t) 
&=& 
g_n(r-t_{ik_n})\mathbbm{1}_{(t_{ik_n}\wedge t,t_{(i+1)k_n}\wedge t]}(r)
\\&=&
g_n(r-t_{ik_n})\mathbbm{1}_{(t_{ik_n},t_{(i+1)k_n}]}(r)\mathbbm{1}_{\{r\leq t\}}. 
\eeas
Obviously, 
\beas 
\E[\alpha_{t_{ik_n}}^2|\calf_{t_{ik_n}}]
&=& 
(b^{[1]}_{t_{ik_n}})^2\psi^n_2k_n\Delta_n+\omega^2\Delta_n^{-1}
\int_{t_{ik_n-1}}^{t_{(i+1)k_n-1}}h_n(u-t_{ik_n})^2du.
\eeas
Since 
\beas 
M_t^n 
&=&\frac{2 \Delta_n^{-1/4}}{\psi_2^n} \sum_{i=0}^{d_n-1} \int_{t_{i k_n} \wedge t}^{t_{(i+1) k_n} \wedge t} \alpha(i, u)  d \alpha(i, u).  
\\&=&
\frac{ \Delta_n^{-1/4}}{\psi_2^n} \sum_{i=0}^{d_n-1} 
\big\{ \alpha(i,t)^2-\langle \alpha(i,\cdot)\rangle_t\big\},
\eeas
it follows that 
\begin{align*}
& D_r^{(1)}  M_{t}^n=\frac{\Delta_n^{-1/4}}{\psi_2^n} \\
&\times \sum_{i=0}^{d_n-1} 
\Bigg\{
2 \alpha(i,t) \Big((b^{[1]})'_{t_{i k_n}} D_r^{(1)} X_{t_{i k_n}} W(i,t)\mathbbm{1}_{ (0, t_{ik_n}]}(r) 
+ b_{t_{i k_n}}^{[1]} D_r^{(1)} {W(i,t)} \Big)\\&- 2 b_{t_{i k_n}}^{[1]} (b^{[1]})'_{t_{i k_n}} D_r^{(1)} X_{t_{i k_n}} 
\int_{t_{ik_n}\wedge t}^{t_{(i+1)k_n}\wedge t}g_n(u-t_{ik_n})^2du
\mathbbm{1}_{ (0, t_{ik_n}]}(r)
\Bigg\} \\
&=
\frac{\Delta_n^{-1/4}}{\psi_2^n} \sum_{i=0}^{d_n-1} 
\Bigg[
\sum_{l=i+1}^{d_n-1} %\sum_{l=i}^{p-1} 
2(b^{[1]})'_{t_{l k_n}} D_r^{(1)} X_{t_{l k_n}} 
\bigg\{\alpha(l,t)W(l,t) \\
&- b_{t_{l k_n}}^{[1]}  \int_{t_{lk_n}\wedge t}^{t_{(l+1)k_n}\wedge t}g_n(u-t_{lk_n})^2du\bigg\}  
+ 2 \alpha(i,t) b_{t_{i k_n}}^{[1]} G_i(r) \mathbbm{1}_{\{r\leq t\}}
\Bigg] \mathbbm{1}_{I_i}(r).   
\end{align*}
and 
\begin{align*}
D_r^{(2)} M_{t}^n&=
\frac{-\Delta_n^{-1/4}}{\psi_2^n} \sum_{i=0}^{d_n-1} 
2 \alpha(i,t) \omega\Delta_n^{-1/2} h_n(r-t_{ik_n})
\mathbbm{1}_{(t_{ik_n-1}\wedge t,t_{(i+1)k_n-1}\wedge t]}(r)
\\&=
\frac{\Delta_n^{-1/4}}{\psi_2^n} \sum_{i=0}^{d_n-1} 
2 \alpha(i,t) \omega\Delta_n^{-1/2} H_i(r)
\mathbbm{1}_{\{r\leq t\}}
\times\mathbbm{1}_{(t_{ik_n-1},t_{(i+1)k_n-1}]}(r).
\end{align*}
where $I_i=(t_{ik_n}, t_{(i+1)k_n}],$  $I_{i,j}=(t_{ik_n+j-1}, t_{ik_n+j}],$ and we use the notations 
$G_i(r)=\sum_{j=1}^{k_n-1} g(j/k_n) \mathbbm{1}_{ I_{i,j}}(r)$ and $H_i(r)=\sum_{j=0}^{k_n-1} -h(j/k_n) \mathbbm{1}_{ I_{i,j}}(r).$
Hence, the Malliavin covariance matrix of $M_t^n$ is expressed by
\begin{align*} 
&\sigma_{M_t^n}
\equiv
\sigma_{11}(n,t)
\\&=
\frac{1}{(\psi_2^n)^2} \sum_{i=0}^{d_n-1} \int_{t_{ik_n}}^{t_{(i+1)k_n}}
\Bigg[
2 \Delta_n^{-1/4}\alpha(i,t) b_{t_{i k_n}}^{[1]} G_i(r) \mathbbm{1}_{\{r\leq t\}}+
\Delta_n^{1/4}\sum_{l=i+1}^{d_n-1}
2(b_{t_{l k_n}}^{[1]})'
\\&
 \times D_r X_{t_{l k_n}} 
\Delta_n^{-1/2}\big[ \alpha(l,t)W(l,t)-  b_{t_{l k_n}}^{[1]} 
\int_{t_{lk_n}\wedge t}^{t_{(l+1)k_n}\wedge t}g_n(u-t_{lk_n})^2du
\big]  
\Bigg]^2\>dr
\\&
+ 
\frac{1}{(\psi_2^n)^2} \sum_{i=0}^{d_n-1} 
\int_{t_{ik_n-1}\wedge t}^{t_{(i+1)k_n-1}\wedge t}
[2 \Delta_n^{-1/4}\alpha(i,t) \omega \Delta_n^{-1/2} 
H_i(r)]^2   %{\colorr{\Bigg\}}} 
%\mathbbm{1}_{\{r\leq t\}}
\>dr.
\end{align*}
%}}
%
\begin{en-text}
\beas 
\sigma_{M_{t_{(p+1)k_n}}^n}
&=&
\frac{\Delta_n^{-1/2}}{(\psi_2^n)^2} \sum_{i=0}^{p} \int_{{\colorr{t_{ik_n}}}}^{\colorr{t_{(i+1)k_n}}}
{\colorr{\Bigg\{}}
\Bigg[
{\colorr{\sum_{l=i+1}^p%\sum_{l=i}^{p-1} 
}}
(b_{t_{l k_n}}^{[1]})' D_r X_{t_{l k_n}} [2 \alpha_{t_{l k_n}} \widebar W_{t_{l k_n}} - 2 b_{t_{l k_n}}^{[1]}  \psi_2^n k_n \Delta_n)]  
\\&&
+ 2 \alpha_{t_{i k_n}} b_{t_{i k_n}}^{[1]} G_i(r) \Bigg]^2+ [2 \alpha_{t_{i k_n}} \omega \Delta_n^{-1/2} 
H_i(r)]^2   {\colorr{\Bigg\}}} dr
\eeas
\end{en-text}
The cross Malliavin covariance $\langle M^n_t,C  \rangle_{\mathbb H}$ of $(M^n_t,C)$ is given by 
\begin{align*}
&\sigma_{12}(t)= 
\frac{1}{\psi_2^n} \sum_{i=0}^{d_n-1} \int_{t_{ik_n}}^{t_{(i+1)k_n}}
\Bigg\{
\Bigg[
2 \Delta_n^{-1/4}\alpha(i,t) b_{t_{i k_n}}^{[1]} G_i(r)\mathbbm{1}_{\{r\leq t\}}+2 \Delta_n^{1/4} \times
\\&
\sum_{l=i+1}^p
(b^{[1]})'_{t_{l k_n}} D_r^{(1)} X_{t_{l k_n}} 
\Delta_n^{-1/2}\big[\alpha(l,t)W(l,t) - b_{t_{l k_n}}^{[1]} 
\int_{t_{lk_n}\wedge t}^{t_{(l+1)k_n}\wedge t}g_n^2(u-t_{lk_n})du
\big]  
\Bigg]
\\&
\times\Bigg[
8 \theta \int_r^1 \left((b_u^{[1]})^2+ \frac{\omega^2 \psi_1}{\theta^2 \psi_2} \right) b_u^{[1]} (b_u^{[1]})' D_r^{(1)}  X_u du 
\Bigg]
\Bigg\} dr.
\end{align*}
Now, let \beas
\sigma(n,t)
&=&
\begin{bmatrix}  \sigma_{11}(n,t) & \sigma_{12}(n,t) \\
\sigma_{12}(n,t) & \sigma_{22}(t) \end{bmatrix}.
\eeas
Then, we modify $\sigma(n,t)$ for $t=t_{(p+1)k_n}$ by
\beas
\tilde{ \sigma}(n,t)
&=&
\begin{bmatrix}  \tilde{ \sigma}_{11}(n,t) & \tilde{\sigma}_{12}(n,t) \\
\tilde{ \sigma}_{12}(n,t) & \sigma_{22}(t) \end{bmatrix}
\qquad(t=t_{(p+1)k_n})
\eeas
with 
\begin{align*}
\tilde{ \sigma}_{11}(n,t)
&=
\frac{1}{(\psi_2^n)^2} \sum_{i=0}^{p} \int_{t_{ik_n}}^{t_{(i+1)k_n}}
\Big[
2 \Delta_n^{-1/4}\alpha_{t_{ik_n}} b_{t_{i k_n}}^{[1]} G_i(r) 
\Big]^2\>dr
\\
+
\frac{1}{(\psi_2^n)^2} & \sum_{i=0}^{p} 
\int_{t_{ik_n-1}}^{t_{(i+1)k_n-1}}
[2 \Delta_n^{-1/4}\alpha_{t_{ik_n}} \omega \Delta_n^{-1/2} 
H_i(r)]^2   \>dr+\frac{1}{(\psi_2^n)^2} \sum_{i=0}^{p} \int_{t_{ik_n}}^{t_{(i+1)k_n}}
\\
\times \Bigg[
\Delta_n^{1/4} &\sum_{l=i+1}^{p} 2(b^{[1]})'_{t_{l k_n}} D_r^{(1)}  X_{t_{l k_n}} 
\Delta_n^{-1/2}\big[ \alpha_{t_{lk_n}}\overline{W}_{t_{lk_n}}-  b_{t_{l k_n}}^{[1]} 
\psi^n_2k_n\Delta_n
\big]  
\Bigg]^2\>dr.
\end{align*}
and 
\beas 
\tilde{\sigma}_{12}(n,t) 
&=& 
\frac{1}{\psi_2^n} \sum_{i=0}^{p} \int_{t_{ik_n}}^{t_{(i+1)k_n}}
\Bigg\{
\Bigg[
\Delta_n^{1/4}\sum_{l=i+1}^p 2(b^{[1]})'_{t_{l k_n}} D_r^{(1)}  X_{t_{l k_n}} 
\\&&\times
\Delta_n^{-1/2}\big[\alpha_{t_{lk_n}}\overline{W}_{t_{lk_n}} - b_{t_{l k_n}}^{[1]} 
\int_{t_{lk_n}\wedge t}^{t_{(l+1)k_n}\wedge t}g_n(u-t_{lk_n})^2du
\big]  
\Bigg]
\\&&
\times\Bigg[
8 \theta \int_r^1 \left((b_u^{[1]})^2+ \frac{\omega^2 \psi_1}{\theta^2 \psi_2} \right) b_u^{[1]} (b_u^{[1]})' D_r^{(1)}  X_u du 
\Bigg]
\Bigg\} dr,
\eeas
evaluated at $t=t_{(p+1)k_n}$, 
respectively. 
Let 
\beas 
J_{n,l}^{(1)} \>=\> \Delta_n^{-1/4}\overline{W}_\tlk 
&=& 
\Delta_n^{-1/4}\int_\tlk^\tlkp g_n(s-lk_n\Delta_n)dW_s
\eeas
and 
\beas
J_{n,l}^{(2)}  \>=\>\Delta_n^{-1/4}\overline{\ep}_\tlk 
&=& 
-\omega\Delta_n^{-3/4}\int_\tlk^\tlkp h_n(s-lk_n\Delta_n)dB_s.
\eeas
By definition, 
\beas 
\alpha_\tlk &=& 
\Delta_n^{1/4}\big(b^{[1]}_\tlk J^{(1)}_{n,l}+J^{(2)}_{n,l}\big)
\eeas
and 
\beas 
\alpha_\tlk\overline{W}_\tlk-b^{[1]}_\tlk\psi^n_2k_n\Delta_n
&=&
b^{[1]}_\tlk\big(\overline{W}_\tlk^2-\psi^n_2k_n\Delta_n\big)
+\overline{\ep}_\tlk\overline{W}_\tlk
\\&=&
\Delta_n^{1/2}J_{n,l}^{(3)}+\Delta_n^{1/2}J_{n,l}^{(4)}+\Delta_n^{1/2}J_{n,l}^{(5)}
\eeas
where 
\beas
J_{n,l}^{(3)}
&=&
\Delta_n^{-1/2}
b^{[1]}_\tlk\int_\tlk^\tlkp dW_s 2g_n(s-lk_n\Delta_n)\int_\tlk^sdW_ug_n(u-lk_n\Delta_n)dW_u
\\
J_{n,l}^{(4)}
&=&
%k_n\overline{\ep}_\tlk \int_\tlk^\tlkp g_n(s-lk_n\Delta_n)dW_s
%\\&=&
%-\omega\Delta_n^{-1/2}k_n\int_\tlk^\tlkp
%\int_\tlk^\tlkp h_n(u-\tlk)dB_u  g_n(s-lk_n\Delta_n)dW_s
%\\&=&
\Delta_n^{-1/2}
\int_\tlk^\tlkp  dW_sg_n(s-lk_n\Delta_n)
\int_\tlk^s dB_u (-\omega)\Delta_n^{-1/2}h_n(u-\tlk)
\\
J_{n,l}^{(5)}
&=&
\Delta_n^{-1/2}
\int_\tlk^\tlkp  dB_s (-\omega)\Delta_n^{-1/2}h_n(s-\tlk)
\int_\tlk^s dW_u g_n(u-lk_n\Delta_n)
\eeas
We claim that for every $q>1$, 
\bea\label{270813-1} 
\sup_{p}
\Big\|
\Delta_n^{1/2}\sum_{i=0}^p
\calc(i,p)
\Big\|_q
&=&
O(\Delta_n^{1/4})
\eea
as $n\to\infty$ 
for 
\begin{align*}
&\calc(i,p)=
\Delta_n^{-1/4} 
\alpha_{t_{i k_n}} b_{t_{i k_n}}^{[1]} %G_i(r)
\Delta_n^{1/4} 
\\&\times
\sum_{l=i+1}^p (b_{t_{l k_n}}^{[1]})' 
\left(\Delta_n^{-1/2}\int_{I_i}D_r^{(1)}  X_{t_{l k_n}} dr\right)
\Delta_n^{-1/2} 
[ \alpha_{t_{l k_n}} \widebar W_{t_{l k_n}} -  b_{t_{l k_n}}^{[1]}  \psi_2^n k_n \Delta_n)] .
\end{align*}
We have 
\beas 
\calc(i,p)
&=&
\bigg((b_{t_{i k_n}}^{[1]})^2J^{(1)}_{n,l}+b_{t_{i k_n}}^{[1]}J^{(2)}_{n,l}\bigg) %G_i(r)
\\& &\times
\Delta_n^{1/4} 
\sum_{l=i+1}^p (b_{t_{l k_n}}^{[1]})' 
\left(\Delta_n^{-1/2}\int_{I_i}D_r^{(1)}  X_{t_{l k_n}} dr\right)
\big(J^{(3)}_{n,l}+J^{(4)}_{n,l}+J^{(5)}_{n,l}\big) .
\eeas
Then the estimate (\ref{270813-1}) follows from Lemma 5 of \cite{YOSH13}. 
By using (\ref{270813-1}), we see 
\begin{align*} 
\sigma_{M_{t_{(p+1)k_n}}^n}&=
\frac{1}{(\psi_2^n)^2} \sum_{i=0}^{p} \int_{t_{ik_n}}^{t_{(i+1)k_n}}
\Bigg\{
\Big[
2 \Delta_n^{-1/4}\alpha_{t_{i k_n}} b_{t_{i k_n}}^{[1]} G_i(r) \Big]^2
\\
+&\Bigg[
\Delta_n^{1/4}\sum_{l=i+1}^p (b_{t_{l k_n}}^{[1]})' D_r^{(1)}  X_{t_{l k_n}} 
\Delta_n^{-1/2}[2 \alpha_{t_{l k_n}} \widebar W_{t_{l k_n}} - 2 b_{t_{l k_n}}^{[1]}  \psi_2^n k_n \Delta_n)]  
\Bigg]^2
\\&+ \Big[2 \Delta_n^{-1/4}\alpha_{t_{i k_n}} \omega \Delta_n^{-1/2} H_i(r)\Big]^2  \Bigg\} dr +O_{L^q}(\Delta_n^{1/4})
\\&=\hat{\sigma}_{11}(n,t_{(p+1)k_n}) +O_{L^q}(\Delta_n^{1/4})
\end{align*}
as $n\to\infty$ uniformly in $p$, for every $q>1$. 
That is, 
\beas 
\sup_{(t,p):\>t=t_{(p+1)k_n},\atop
\>p=0,...,d_n-1}
\big\|\sigma_{11}(n,t)-\tilde{\sigma}_{11}(n,t)\big\|_q &=& O(\Delta_n^{1/4})
\eeas
for every $q>1$. 
In a similar fashion, we also obtain 
\beas 
\sup_{(t,p):\>t=t_{(p+1)k_n},\atop
\>p=0,...,d_n-1}
\big\|\sigma_{12}(n,t)-\tilde{\sigma}_{12}(n,t)\big\|_q &=& O(\Delta_n^{1/4})
\eeas
as $n\to\infty$ 
for every $q>1$. In this way, we obtain the following result.  
\begin{lemma}\label{270815-1}
\beas
\sup_{(t,p):\>t=t_{(p+1)k_n},\atop
\>p=0,...,d_n-1}
\big\|\sigma(n,t)-\tilde{\sigma}(n,t)\big\|_q &=& O(\Delta_n^{1/4})
\eeas
as $n\to\infty$ 
for every $q>1$. 
\end{lemma}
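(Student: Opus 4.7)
My plan is to reduce the matrix estimate to the two scalar entrywise bounds that have already been derived in the displays immediately preceding the lemma, and to piece them together by the triangle inequality. Since $\sigma_{22}(t)$ is common to both matrices, the $(2,2)$ entry of $\sigma(n,t)-\tilde{\sigma}(n,t)$ vanishes identically, so any reasonable matrix norm satisfies
\beas
\|\sigma(n,t)-\tilde{\sigma}(n,t)\|_q
&\leq&
\|\sigma_{11}(n,t)-\tilde{\sigma}_{11}(n,t)\|_q+2\|\sigma_{12}(n,t)-\tilde{\sigma}_{12}(n,t)\|_q,
\eeas
and the whole task reduces to showing each entrywise $L^q$-norm is $O(\Delta_n^{1/4})$ uniformly in $p\in\{0,\ldots,d_n-1\}$.

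For the $(1,1)$ estimate, I first use the discrete-time observation that at $t=t_{(p+1)k_n}$ one has $\alpha(i,t)=\alpha_{t_{ik_n}}$ and $W(l,t)=\overline{W}_{t_{lk_n}}$ for every $l\leq p$, together with the exact identity $\int_{t_{lk_n}}^{t_{(l+1)k_n}} g_n(u-t_{lk_n})^2\,du=\psi_2^n k_n\Delta_n$. Writing $A_i(r)$ for the $G_i$-piece and $B_i(r)$ for the anticipative $\sum_{l>i}D_r^{(1)} X_{t_{lk_n}}$-piece of the integrand of $\sigma_{11}$, expansion of $(A_i+B_i)^2$ identifies
\beas
\sigma_{11}(n,t)-\tilde{\sigma}_{11}(n,t)
&=&
\frac{2}{(\psi_2^n)^2}\sum_{i=0}^p\int_{I_i}A_i(r)B_i(r)\,dr.
\eeas
Up to constants and the bounded weight $G_i(r)$ in the $r$-integration, the right-hand side has exactly the same structure as $\Delta_n^{1/2}\sum_i\calc(i,p)$, and invocation of (\ref{270813-1}) then delivers the desired $O(\Delta_n^{1/4})$ bound in $L^q$ uniformly in $p$.

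The $(1,2)$ bound proceeds along parallel but simpler lines: $\tilde{\sigma}_{12}$ drops the $A_i$-summand entirely, so the difference reduces to $(\psi_2^n)^{-1}\sum_i\int_{I_i}A_i(r)\cdot\bigl[8\theta\int_r^1(\cdots)D_r^{(1)} X_u\,du\bigr]\,dr$. Because the Malliavin derivative of $C$ is bounded in every $L^q$, the decomposition $\alpha_{t_{ik_n}}=\Delta_n^{1/4}(b_{t_{ik_n}}^{[1]}J_{n,i}^{(1)}+J_{n,i}^{(2)})$ together with the conditional-orthogonality argument of Lemma~5 of~\cite{YOSH13} gives the required rate.

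The sole technical obstacle is (\ref{270813-1}) itself: a naive term-by-term estimate would sum $d_n$ contributions of size $O(\Delta_n^{1/4})$, yielding the useless bound $O(\Delta_n^{-1/4})$. The gain comes from the conditional centering of the differences $\alpha_{t_{lk_n}}\overline{W}_{t_{lk_n}}-b_{t_{lk_n}}^{[1]}\psi_2^n k_n\Delta_n$ given $\calf_{t_{lk_n}}$ and from the martingale structure of the iterated-integral factors $J_{n,l}^{(3)},J_{n,l}^{(4)},J_{n,l}^{(5)}$, which Lemma~5 of~\cite{YOSH13} exploits. Once this ingredient is granted, the combination of the two entrywise bounds into the matrix statement is a routine application of norm equivalence on $2\times 2$ matrices.
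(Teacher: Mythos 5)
Your argument is correct and follows essentially the same route as the paper: the $(2,2)$ entries coincide, the $(1,1)$ difference is exactly the cross term $\frac{2}{(\psi_2^n)^2}\sum_i\int_{I_i}A_iB_i\,dr$ controlled by the estimate (\ref{270813-1}) via Lemma~5 of \cite{YOSH13}, and the $(1,2)$ difference is handled by the same conditional-orthogonality mechanism (the paper's ``in a similar fashion''). No gaps.
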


\begin{en-text}
\koko para Let 
\beas 
\hat{\sigma}_{11}(n,t)
&=&
\frac{1}{(\psi_2^n)^2} \sum_{i=0}^{p} 
\int_{{\colorr{t_{ik_n}}}}^{\colorr{t_{(i+1)k_n}}}
\Bigg[
\sum_{l=i}^{p-1}4 (b^{[1]})'_{t_{l k_n}} D_r X_{t_{l k_n}} 
\Delta_n^{-1/4}
\big[\alpha_{t_{l k_n}} \widebar W_{t_{l k_n}} - b_{t_{l k_n}}^{[1]}  \psi_2^n k_n \Delta_n
\big] \Bigg]^2 dr  
\\&&
+ 
\frac{1}{(\psi_2^n)^2} \sum_{i=0}^{p} 
4\Delta_n^{-1/2}
 \big([b_{t_{i k_n}}^{[1]} \widebar W_{t_{i k_n}}]^2+[\widebar \eps_{t_{i k_n}}]^2\big)
 \bigg( (b_{t_{i k_n}}^{[1]})^2 \psi_2^n k_n \Delta_n +\frac{\omega^2 \psi_1^n}{k_n}\bigg)
\eeas
for $t=t_{(p+1)k_n}$. 
Then using orthogonality between $\overline{W}_{t_{ik_n}}$ and $\overline{\ep}_{t_{ik_n}}$,  
we obtain  
\beas 
\sup_{(t,p):\>t=t_{(p+1)k_n},\atop
\>p=0,...,d_n-1}
\big\|\sigma_{11}(n,t)-\hat{\sigma}_{11}(n,t)\big\|_q &=& O(\Delta_n^{1/4})
\eeas
for every $q>1$. 
\end{en-text}

Define $m_n(p)$ by  
\beas 
m_n(p)
&=&
\frac{1}{(\psi_2^n)^2} \sum_{i=0}^{p} \int_{t_{ik_n}}^{t_{(i+1)k_n}}
\Big[
2 \Delta_n^{-1/4}\alpha_{t_{ik_n}} b_{t_{i k_n}}^{[1]} G_i(r) %\mathbbm{1}_{\{r\leq t\}}
\Big]^2\>dr
\\&&
+ 
\frac{1}{(\psi_2^n)^2} \sum_{i=0}^{p} 
\int_{t_{ik_n-1}}^{t_{(i+1)k_n-1}}
[2 \Delta_n^{-1/4}\alpha_{t_{ik_n}} \omega \Delta_n^{-1/2} 
H_i(r)]^2   %{\colorr{\Bigg\}}} 
%\mathbbm{1}_{\{r\leq t\}}
\>dr. 
\eeas
By the Cauchy-Schwarz inequality, we see 
\bea\label{270815-2} 
\det\tilde \sigma\left(n,t_{(p+1)k_n}\right)
&=& 
\tilde{\sigma}_{11}(n,t_{(p+1)k_n})\sigma_{22}(t_{(p+1)k_n})-\tilde{\sigma}_{12}(n,t_{(p+1)k_n})^2
\nn\\&\geq& 
m_n(p)\>\sigma_{22}(t_{(p+1)k_n}). 
\eea
Let $u_n=\lfloor d_n/2 \rfloor k_n\Delta_n$ and let $p_n=\lfloor d_n/2\rfloor -1$. 
Set $\hat{m}_n=m_n(p_n)$. 
Define $s_n$ by 
\beas 
 s_n 
 &=&
 \frac{1}{2}\left[ \hat{m}_n\sigma_{22}(u_n)+ \psi\left(\frac{\hat{m}_n}{2 c_1}\right) \right],
\eeas
where $\psi:\bbR\to[0,1]$ in $C^\infty(\bbR)$ and satisfies $\psi(x)=1$ if $|x|\leq1/2$ and $\psi(x)=0$ if $|x|\geq1.$
Then, we observe that $s_n \geq 1/2$ if $\hat{m}_n \leq c_1$, 
and $s_n \geq 2^{-1} \hat{m}_n\sigma_{22}(u_n)$ otherwise.
\begin{en-text}
\beas 
 s_n 
 &=&
 \frac{1}{2} \det \left[ \tilde \sigma\left(n,u_n\right)+ \psi\left(\frac{m_n}{2 c_1}\right) I_2 \right], 
\eeas

Then, we observe that $s_n \geq 1/2$ if $m_n \leq c_1$, 
and $s_n \geq 2^{-1} \det \tilde \sigma\left(n,u_n\right)$ otherwise.
\end{en-text}
\begin{lemma}\label{270815-3} 
For each $q>1$, 
\beas 
\limsup_{n\to\infty}\E[s_n^{-q}]&<&\infty.
\eeas
\end{lemma}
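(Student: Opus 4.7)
The strategy is to exploit the two natural lower bounds built into the definition of $s_n$, split the expectation according to whether $\hat m_n$ is above or below $c_1$, and then reduce everything to the inverse moments of $\sigma_{22}(u_n)$, which are controlled by the non-degeneracy estimate $(\ref{20150817-1})$.

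First I would record the two pointwise inequalities that follow from the non-negativity of the summands in the definition of $s_n$, namely $s_n \geq \tfrac{1}{2}\,\psi(\hat m_n/(2c_1))$ and $s_n \geq \tfrac{1}{2}\,\hat m_n\,\sigma_{22}(u_n)$. On the event $\{\hat m_n \leq c_1\}$ the argument of $\psi$ lies in $[0,1/2]$, so $\psi(\hat m_n/(2c_1))=1$ and hence $s_n \geq 1/2$; on $\{\hat m_n > c_1\}$ the second inequality gives $s_n \geq (c_1/2)\,\sigma_{22}(u_n)$. Combining the two cases yields the pointwise bound
\[
s_n^{-q} \;\leq\; 2^q\,\mathbbm{1}_{\{\hat m_n \leq c_1\}} \;+\; (2/c_1)^q\,\sigma_{22}(u_n)^{-q}\,\mathbbm{1}_{\{\hat m_n>c_1\}},
\]
and taking expectations gives $\E[s_n^{-q}] \leq 2^q + (2/c_1)^q\,\E[\sigma_{22}(u_n)^{-q}]$.

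The remaining task is to bound $\E[\sigma_{22}(u_n)^{-q}]$ uniformly in $n$. Here the key observation is monotonicity: since $\sigma_{22}(t)$ is an integral over $[0,t]$ of a non-negative quantity, $t \mapsto \sigma_{22}(t)$ is non-decreasing. The grid conditions $k_n\Delta_n^{1/2} \to \theta$ and $d_n = \lfloor 1/(k_n\Delta_n)\rfloor$ imply $u_n = \lfloor d_n/2\rfloor k_n\Delta_n \to 1/2$, so $u_n \geq 1/4$ for all $n$ sufficiently large, and consequently $\sigma_{22}(u_n)^{-q} \leq \sigma_{22}(1/4)^{-q}$ for such $n$. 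The estimate $(\ref{20150817-1})$, obtained earlier from Condition $[V]$ and the H\"ormander condition for the enlarged process $\bar X$, then gives $\sigma_{22}(1/4)^{-1} \in \bigcap_{p>1}L^p$, so the right-hand side is a finite constant that does not depend on $n$ for all large $n$.

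The proof is this short because the real work sits in the earlier non-degeneracy estimate: the cutoff inside $s_n$ is tailored precisely so that $s_n^{-q}$ is controlled either trivially (when $\hat m_n$ is small) or via inverse moments of $\sigma_{22}$ (when $\hat m_n$ is large), side-stepping the Malliavin covariance of $M^n_t$, whose non-degeneracy on the full interval $[0,1]$ is not directly available. I do not anticipate a real obstacle here; the only point requiring a little care is making sure $u_n$ is bounded below by a fixed positive constant for large $n$, which is immediate from the assumed asymptotics of $k_n$ and $d_n$, and that the invocation of $(\ref{20150817-1})$ is valid because $1/4>0$.
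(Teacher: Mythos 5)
Your proof is correct and follows the same route as the paper: the paper's own (one-line) proof simply cites the non-degeneracy estimate (\ref{20150817-1}), having already recorded immediately after the definition of $s_n$ the very case distinction you use ($s_n\geq 1/2$ when $\hat m_n\leq c_1$, and $s_n\geq 2^{-1}\hat m_n\sigma_{22}(u_n)$ otherwise). Your additional observations — the monotonicity of $t\mapsto\sigma_{22}(t)$ and the fact that $u_n$ is bounded below by a positive constant for large $n$, so that (\ref{20150817-1}) applies — are exactly the details the paper leaves implicit.
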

\proof The result easily follows from (\ref{20150817-1}). 
\qed
\halflineskip

Let 
\beas 
m_n^\dagger
&=& 
\frac{4}{\psi_2^2}
\sum_{i=0}^{p_n} 
\Delta_n^{-1/2}
 \big([b_{t_{i k_n}}^{[1]} \widebar W_{t_{i k_n}}]^2+[\widebar \eps_{t_{i k_n}}]^2\big)
 \bigg( (b_{t_{i k_n}}^{[1]})^2 \psi_2^n k_n \Delta_n +\frac{\omega^2 \psi_1^n}{k_n}\bigg). 
\eeas
Then, for every $q>1$ the orthogonality between $\overline{W}_{t_{ik_n}}$ and $\overline{\ep}_{t_{ik_n}}$ 
gives   
\beas 
%\sup_{(t,p):\>t=t_{(p+1)k_n},\atop \>p=0,...,d_n-1}
\big\|\hat{m}_n-m_n^\dagger\big\|_q &=& O(\Delta_n^{1/4})
\eeas

\begin{en-text}
where 
\beas 
m_n 
&=& 
\frac{1}{\psi_2^n} \sum_{i=0}^{\lfloor d_n/2 \rfloor} 
\big\{
\Delta_n^{-1/2}[\widebar W_{t_{i k_n}}]^2+ \Delta_n^{-1/2}[\widebar \eps_{t_{i k_n}}]^2\big\}
\eeas
\end{en-text}

Let $\Pi^n=\{t_{ik_n};i=0,...,d_n-1\}$. 
\begin{lemma}\label{270816-6}
For sufficiently small positive number $c_1$, 
\beas
\sup_{t\geq1/2}\bbP\big[\det\sigma_{(M^n_t,C)}<s_n\big] 
&=&O(\Delta_n^\xi)
\eeas
where $\xi$ is arbitrary positive number. 
\end{lemma}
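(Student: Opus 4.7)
The plan is to reduce the estimate at an arbitrary $t\geq 1/2$ to one at the nearest preceding lattice point $\tau=t_{(p+1)k_n}$ with $p\geq p_n$ (such $\tau$ exists because $u_n\leq1/2$ by construction), apply Lemma~\ref{270815-1} and the Cauchy-Schwarz lower bound \eqref{270815-2} to obtain $\det\sigma_{(M^n_\tau,C)}\geq \hat{m}_n\sigma_{22}(u_n)$ up to an $O_{L^q}(\Delta_n^{1/4})$ remainder, and then split the target event according to the size of $\hat{m}_n$. The main obstacle is controlling the probability of the bad event $\{\hat{m}_n<2c_1\}$ super-polynomially; this will rest on a Gaussian large-deviation argument exploiting the block structure of the pre-averaged noise.

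For the reduction, the increment $\delta:=M^n_t-M^n_\tau$ arises from the single partial block of index $p+1$ and has the explicit form $(2\Delta_n^{-1/4}/\psi_2^n)\int_\tau^t\alpha(p+1,u)\,d\alpha(p+1,u)$. Using the scaling $\mathrm{Var}(\alpha(p+1,u))=O(\Delta_n^{1/2})$ and Burkholder's inequality one checks that $\|D\delta\|_{\bbH}=O_{L^q}(\Delta_n^{1/4})$ for every $q>1$. Expanding
\[
\det\sigma_{(M^n_t,C)}\;=\;\|DM^n_t\|^2\|DC\|^2-\langle DM^n_t,DC\rangle^2
\]
and applying Cauchy-Schwarz to the cross terms, together with uniform Sobolev moment bounds on $M^n_\tau$ and $C$, I obtain $|\det\sigma_{(M^n_t,C)}-\det\sigma_{(M^n_\tau,C)}|=O_{L^q}(\Delta_n^{1/4})$. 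Lemma~\ref{270815-1} then replaces $\sigma(n,\tau)$ by $\tilde\sigma(n,\tau)$ at the same order of error, and \eqref{270815-2} combined with the trivial monotonicity $m_n(p)\geq m_n(p_n)=\hat{m}_n$ and $\sigma_{22}(\tau)\geq\sigma_{22}(u_n)$ yields
\[
\det\sigma_{(M^n_t,C)}\;\geq\;\hat{m}_n\,\sigma_{22}(u_n)-R_n,\qquad \|R_n\|_q=O(\Delta_n^{1/4}).
\]
On the event $\{\hat{m}_n\geq 2c_1\}$ one has $\psi(\hat{m}_n/(2c_1))=0$, so $s_n=\hat{m}_n\sigma_{22}(u_n)/2$, and $\{\det\sigma_{(M^n_t,C)}<s_n\}\subset\{R_n>c_1\sigma_{22}(u_n)\}$. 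Markov's inequality with a large exponent, combined with the negative moments $\sigma_{22}(u_n)^{-q}\in L^1$ from \eqref{20150817-1}, then gives $O(\Delta_n^\xi)$ for any prescribed $\xi>0$.

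The core of the proof, and the main obstacle, is the complementary event $\{\hat{m}_n<2c_1\}$, on which $s_n$ may be of order $1/2$ and the lower bound above is useless; hence one must show this event is super-polynomially unlikely directly. Using the approximation $\|\hat{m}_n-m_n^\dagger\|_q=O(\Delta_n^{1/4})$ stated just before the lemma and the trivial estimate $(b^{[1]}_{t_{ik_n}})^2\psi_2^nk_n\Delta_n+\omega^2\psi_1^n/k_n\geq \omega^2\psi_1^n/k_n$, one reduces to proving super-polynomial smallness of $\p\bigl[k_n^{-2}\Delta_n^{-1/2}\sum_{i=0}^{p_n}(\overline\ep_{t_{ik_n}})^2<c'\bigr]$ for some small $c'>0$. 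The crucial structural fact is that, since $\overline\ep_{t_{ik_n}}=\sum_{j=0}^{k_n-1}-h(j/k_n)\ep_{t_{ik_n+j}}$ uses only the indices from $ik_n$ to $(i+1)k_n-1$ and these index ranges are disjoint across different $i$, the random variables $(\overline\ep_{t_{ik_n}})_{0\leq i\leq p_n}$ are i.i.d.\ centered Gaussians with common variance $\omega^2\psi_1^n/k_n$. Consequently $\sum_{i=0}^{p_n}(\overline\ep_{t_{ik_n}})^2$ is distributed as $(\omega^2\psi_1^n/k_n)\chi^2_{p_n+1}$ with $p_n+1\sim(2\theta)^{-1}\Delta_n^{-1/2}\to\infty$. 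The classical one-sided Cram\'er bound $\p[\chi^2_k\leq\eta k]\leq e^{-kI(\eta)}$, with $I(\eta)>0$ for $\eta\in(0,1)$, then gives, for $c_1$ small enough,
\[
\p[\hat{m}_n<2c_1]\;\leq\;\p[m_n^\dagger<3c_1]+O(\Delta_n^\xi)\;\leq\;e^{-C\Delta_n^{-1/2}}+O(\Delta_n^\xi),
\]
which is $O(\Delta_n^\xi)$ for every $\xi>0$. Combining the two cases completes the proof, uniformly in $t\geq 1/2$.
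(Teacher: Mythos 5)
Your argument follows essentially the same route as the paper: restrict to lattice times $t_{(p+1)k_n}$ with $p\geq p_n$ (absorbing the off-lattice discrepancy as an $O_{L^q}(\Delta_n^{1/4})$ perturbation), invoke Lemma \ref{270815-1} to pass to $\tilde\sigma(n,t)$, use the Cauchy--Schwarz bound (\ref{270815-2}) together with the monotonicity of $m_n(\cdot)$ and $\sigma_{22}(\cdot)$ to reduce everything to the event $\{\hat m_n<2c_1\}$, and then to $\{m_n^\dagger \text{ small}\}$ via $\|\hat m_n-m_n^\dagger\|_q=O(\Delta_n^{1/4})$. Your handling of the good event (keeping an explicit remainder $R_n$ and applying Markov with the negative moments of $\sigma_{22}(u_n)$) is a cosmetic variant of the paper's observation that $\{m_n(p_n)\sigma_{22}(u_n)<2s_n\}\subseteq\{\psi(\hat m_n/(2c_1))>0\}\subseteq\{\hat m_n<2c_1\}$. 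One slip: in the intermediate reduction you write the normalizing factor as $k_n^{-2}\Delta_n^{-1/2}\sim\Delta_n^{1/2}\to0$, which would make the stated probability tend to $1$; the correct prefactor coming from $m_n^\dagger$ is $\Delta_n^{-1/2}\cdot\omega^2\psi_1^n/k_n\sim\omega^2\psi_1/\theta=O(1)$, so the event should read $\{\sum_{i\leq p_n}(\overline\eps_{t_{ik_n}})^2<c'\}$ up to a constant. Your subsequent computation is in fact carried out with the correct scaling ($\sum_i(\overline\eps_{t_{ik_n}})^2\sim(\omega^2\psi_1^n/k_n)\chi^2_{p_n+1}$ with mean tending to $\omega^2\psi_1/(2\theta^2)>0$), so this is a typo rather than a gap. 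Worth noting: the paper simply asserts $\bbP[m_n^\dagger\leq c_1]=O(\Delta_n^\xi)$ without proof, whereas your disjoint-block observation (the $\overline\eps_{t_{ik_n}}$ are i.i.d.\ centered Gaussians of variance $\omega^2\psi_1^n/k_n$) plus the one-sided Cram\'er bound for the $\chi^2$ lower tail supplies exactly the missing super-polynomial (indeed $e^{-c\Delta_n^{-1/2}}$) estimate; this is a genuine completion of the paper's argument.
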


\begin{proof} 
With the help of (\ref{270815-2}), Lemmas \ref{270815-1} and \ref{270815-3}, we have 
\beas &&
\sup_{t\geq1/2}\bbP\big[\det\sigma_{(M^n_t,C)}<s_n\big] \leq
\sup_{t\geq1/2}\bbP\big[\det\sigma(n,t)<s_n\big] 
\\&\leq&
\sup_{t\in\Pi^n;t\geq1/2}\bbP\big[\det\sigma(n,t)<1.5s_n\big] 
\\&&
+\sup_{s,t;|s-t|\leq k_n\Delta_n}\bbP\big[|\det\sigma(n,t)-\det\sigma(n,s)|>0.5s_n\big]
\\&\leq&
\sup_{t\in\Pi^n;t\geq1/2}\bbP\big[\det\tilde{\sigma}(n,t)<2s_n\big] 
\\&&
+\sup_{t\in\Pi^n;t\geq1/2}\bbP\big[|\tilde{\sigma}(n,t)-\sigma(n,t)|>0.5s_n\big]
+O(\Delta_n^\xi)
\\&\leq&
\sup_{p\geq p_n}\bbP\big[m_n(p)\>\sigma_{22}(t_{(p+1)k_n})<2s_n\big] 
\\&&
+\sup_{t\in\Pi^n;t\geq1/2}\bbP\big[|\tilde{\sigma}(n,t)-\sigma(n,t)|>0.5s_n\big]
+O(\Delta_n^\xi)
\\&\leq&
\bbP\big[m_n(p_n)\>\sigma_{22}(u_n)<2s_n\big] 
+O(\Delta_n^\xi)
\\&\leq&
\bbP\big[\hat{m}_n\leq c_1\big] +O(\Delta_n^\xi)
\\&\leq&
\bbP\big[m_n^\dagger\leq c_1\big] +O(\Delta_n^\xi)=O(\Delta_n^\xi)
\eeas
where $\xi$ can be any positive number, 
if we chose a sufficiently small positive number $c_1$. 
\end{proof}

\subsection{Composition of $\xi_n$}\label{SecKsi}
We again consider $\psi:\bbR\to[0,1]$ in $C^\infty(\bbR)$ that satisfies 
$\psi(x)=1$ if $|x|\leq1/2$ and $\psi(x)=0$ if $|x|\geq1$. 
For $r_n=\Delta_n^{1/4}$, let 
\beas 
\xi_n'' &=& %c^*_0
r_n^{-c^*/2}(C_n-C)+2[1+4\Delta_{(M_n,C)}s_n^{-1}]^{-1}
+r_n^{c^*_1/2}C^2,
\eeas
where %$c^*_0$, 
$c^*_1>0$, and $c^*\in(2q,1)$ for $q=(1-a)/2$ for a fixed $a\in(0,1/3)$. 
Let 
\beas
\tilde{\xi}_n 
&=& 
%L^*
\int_{[0,1]^2}
\bigg(\frac{r_n^{-q/2}|C^n_t-C_t-C^n_s+C_s|}{|t-s|^{3/8}}\bigg)^8dtds,
\eeas
where %$L^*$ will be chosen as a sufficiently large constant, and 
$C^n_t=\langle M^n\rangle_t$. 
We compose $\xi_n$ as $\xi_n=\xi_n''+\tilde{\xi}_n$. 

Tracing the derivation of the stochastic expansion, we can see  
the expansion holds in $\bbD_{s,p}$ sense and 
Condition [B2]$_\ell$ holds for arbitrary $\ell\in\bbN$. 
It is easy to verify [B3] (i) by estimating 
$\bbP\big[2[1+4\Delta_{(M_n,C)}s_n^{-1}]^{-1}>2/5\big]$
with the aid of Lemma \ref{270816-6}.  
Condition [B3] (ii) is immediate by definition. 
Condition [B3] (iii) follows from Lemma \ref{270815-3}.

\subsection{Estimate of $\Phi_n^\alpha$} 
We shall verify Condition [B5] to prove the validity of the asymptotic expansion. 
We know 
\beas 
\Phi_n^\alpha(u,v) 
&=&
{\tt i}^{-|\alpha|}d^\alpha_{(u,v)}
\bbE\bigg[\int_0^1 e^n_t(u)d({\tt i}uM^n_t)\Psi(u,v)\psi_n\bigg]
\eeas
with 
$\Psi(u,v)=\exp((-u^2)+iv)C)$. 
The FGH-decomposition (\cite{YOSH13}) will be used: 
\beas 
e^n_t(u,v)\Psi(u,v) &=& \bbF^n_t(u,v)\bbG^n_t(u)\bbH^n_t(u)
\eeas
with
\beas 
\bbF^n_t(u,v) &=& \exp\left(\tti uM^n_t+\tti vC\right),
\\
\bbG^n_t(u) &=& \exp\left(-\half u^2(C-C_t)\right),
\\
\bbH^n_t(u) &=& \exp\left(\half u^2(C^n_t-C_t)\right). 
\eeas

Applying the duality twice, we obtain the representation 
\beas 
\Delta_n^{-1/4}\Phi_n^0(u,v) 
&=&
\frac{2\Delta_n^{-1/2}}{\psi_2^n}
\sum_{i=0}^{d_n-1}
\int_0^1\int_0^1dtds\>\cale^n_i(u,v)_{t,s}
\eeas
where 
\beas 
\cale^n_i(u,v)_{t,s}
&=& 
\bbE\big[\bbF^n_t(u,v)\bbG^n_t(u)\bbH^n_t(u)
\Xi^n_i(u,v)_{t,s}\big]. 
\eeas
The functional $\Xi^n_{t,s}(u,v)$ is given by 
\beas &&
\Xi^n_i(u,v)_{t,s}
\\&=&
\tti u \big(e^n_t(u)\Psi(u,v)\big)^{-1}
\\&&\times
\bigg\{
g_n(s-\tik)g_n(t-\tik)b^{[1]}_\tik D^{(1)}_s\bigg(e^n_t(u)b^{[1]}_\tik D^{(1)}_t(\Psi(u,v)\psi_n)\bigg)
\\
&+&g_n(s-\tik)\Delta_n^{-1/2}h_n(t-\tik)b^{[1]}_\tik (-\omega)
D^{(1)}_s\bigg(e^n_t(u)\Psi(u,v)D^{(2)}_t\psi_n\bigg)
\\
&+&\Delta_n^{-1/2}h_n(s-\tik)g_n(t-\tik)b^{[1]}_\tik (-\omega)
D^{(2)}_s\bigg(e^n_t(u)b^{[1]}_\tik D^{(1)}_t(\Psi(u,v)\psi_n)\bigg)
\\
&+&\Delta_n^{-1/2}h_n(s-\tik)\Delta_n^{-1/2}h_n(t-\tik)(-\omega)^2
D^{(2)}_s\bigg(e^n_t(u)\Psi(u,v)D^{(2)}_t\psi_n\bigg)
\bigg\}
\eeas
After all, $\Xi^n_i(u,v)_{t,s}$ is a polynomial of densities of $O(1)$ that are stable in $L^p$ sense. 
We note that the functions $t\mapsto \Delta_n^{-1/2}h_n(t-\tik)$ are stable 
as $n\to\infty$. % in these procedures. 

By the integration-by-parts formula applied at most 8 times, 
following the (a)-(h) procedure (pages 911-912 of \cite{YOSH13}), 
we can obtain the estimate 
\beas 
\limsup_{n\to\infty}\sup_{i=0,...,d_n-1}
\sup_{t,s\in[0,1]}
\sup_{(u,v)\in\Lambda^0_n(2,q)}
|(u,v)|^3|\cale^n_i(u,v)_{t,s}| &<& \infty. 
\eeas
Indeed, for $t<1/2$, we take advantage of the decay of $\bbG^n_t(u)$ in $u$ and 
the nondegeneracy of $C$ for $v$, i.e., (\ref{20150817-1}). 
For $t\geq1/2$, we can use nondegeneracy of $(M^n_t,C)$ for $(u,v)$, 
i.e., Lemmas \ref{270816-6} and \ref{270815-3}. 
Estimation of $\Phi_n^\alpha(u,v)$ is similarly done. 
In this way, Condition [B5] has been verified. 
%}}

\subsection{Proof of Proposition \ref{UpperSigmaLemma} and Theorem \ref{MainTh}}
%\koko Need revision of this subsection
%
Since $e_t^n(u)$ is an exponential martingale 
and $C$ is bounded on the event $\{\psi_n>0\}$, we have 
$$\Phi_n(u,v)=\E \left[\int_0^1 e_t^n(u) d(\im u M_t^n) \Psi(u,v) \psi_n \right],$$
where
$$\Psi(u,v)=\exp{\left((-u^2/2+\im v)C \right)}. $$
We decompose $\Delta_n^{-1/4} \Phi_n(u,v)$ as 
$$ \Delta_n^{-1/4} \Phi_n(u,v)=\check {U}_n(u,v)+\hat {U}_n(u,v),$$
where
\begin{align*}
\check {U}_n(u,v)&= \Delta_n^{-\frac{1}{4}} \sum_{i=0}^{d_n-1} 
\E \left[\Psi(u,v) \int_{t_{i k_n}}^{t_{(i+1) k_n}}
 e_{t_{i k_n-1}}^n(u) d(\im u M_t^n) \psi_n \right], \\
\hat {U}_n(u,v)&= \Delta_n^{-\frac{1}{4}} \sum_{i=0}^{d_n-1} 
\E \left[\Psi(u,v) \int_{t_{i k_n}}^{t_{(i+1) k_n}}
 (e^n_t(u)- e_{t_{i k_n-1}}^n(u)) d(\im u M_t^n) \psi_n \right].
\end{align*}
We will prove that $\check {U}_n(u,v)$ is the main term and $\hat {U}_n(u,v)$ is negligible. 

Let us first look at the term $\check {U}_n(u,v)$. We observe that 
\beas 
\check {U}_n(u,v)
&=&
\frac{2\Delta_n^{-1/2}}{\psi_2^n}
\sum_{i=0}^{d_n-1}
\int_0^1\int_0^1dtds\>\check{\cale}^n_i(u,v)_{t,s}
\eeas
where $\check{\cale}^n_i(u,v)_{t,s}$ has a representation similar to $\cale^n_i(u,v)_{t,s}$ 
as 
\beas 
\check{\cale}^n_i(u,v)_{t,s} 
&=& 
\E \big[e^n_{\tik-1}(u)\Psi(u,v)\check{\Xi}^n_i(u,v)_{t,s}\big]
\eeas
with 
\begin{align*}
&\check{\Xi}^n_i(u,v)_{t,s}=
\tti u \big(e^n_{\tik-1}(u)\Psi(u,v)\big)^{-1}
\\&\times
\bigg\{
g_n(s-\tik)g_n(t-\tik)b^{[1]}_\tik e^n_{\tik-1}(u)b^{[1]}_\tik 
D^{(1)}_s\bigg(D^{(1)}_t(\Psi(u,v)\psi_n)\bigg)
\\&
+g_n(s-\tik)\Delta_n^{-1/2}h_n(t-\tik)b^{[1]}_\tik (-\omega)
e^n_{\tik-1}(u)D^{(1)}_s\bigg(\Psi(u,v)D^{(2)}_t\psi_n\bigg)
\\&
+\Delta_n^{-1/2}h_n(s-\tik)g_n(t-\tik)b^{[1]}_\tik (-\omega)
e^n_{\tik-1}(u)b^{[1]}_\tik D^{(1)}_t\bigg(\Psi(u,v)D^{(2)}_s\psi_n\bigg)
\\&
+\Delta_n^{-1/2}h_n(s-\tik)\Delta_n^{-1/2}h_n(t-\tik)(-\omega)^2
e^n_{\tik-1}(u)\Psi(u,v)D^{(2)}_sD^{(2)}_t\psi_n
\bigg\}. 
\end{align*}
For derivation of the above equality, the bounds of the supports of $g_n$ and $h_n$ 
were  used. 
The terms stemming from the last three terms in $\{...\}$ are negligible 
since $\P[|\xi_n|>1]=O(\Delta_n^L)$ for arbitrary $L>0$. 
By the same reason applied to the first term there, we have 
\beas 
\check {U}_n(u,v)
&=&
\frac{2\Delta_n^{-1/2}}{\psi_2^n}
\sum_{i=0}^{d_n-1}
\int_0^1\int_0^1dtds\>\dot{\cale}^n_i(u,v)_{t,s}+o(1)
\eeas
where 
\beas 
\dot{\cale}^n_i(u,v)_{t,s}
&=&
\E \big[e^n_{\tik-1}(u)\Psi(u,v)\dot{\Xi}^n_i(u,v)_{t,s}\big]
\eeas
and
\beas &&
\dot{\Xi}^n_i(u,v)_{t,s}
\\&=&
\tti u \big(e^n_{\tik-1}(u)\Psi(u,v)\big)^{-1}
\\&&\times
g_n(s-\tik)g_n(t-\tik)(b^{[1]}_\tik)^2 e^n_{\tik-1}(u)%b^{[1]}_\tik 
\psi_n
D^{(1)}_s D^{(1)}_t\Psi(u,v)
\\&=&
\tti u \>g_n(s-\tik)g_n(t-\tik)(b^{[1]}_\tik)^2\psi_n
\big\{l^2(D_s^{(1)}C)(D_t^{(1)}C)+lD_s^{(1)}D_t^{(1)}C\big\}
\eeas
where $l=-\frac{u^2}{2}+ \im v$. 
Therefore 
\begin{align*}
 &\check {U}_n(u,v)
=
\frac{2\Delta_n^{-1/2}}{\psi_2^n}\sum_{i=0}^{d_n-1}
\int\int dtds \>
\E \bigg[e^n_\tik(u)\Psi(u,v) \tti u
\>g_n(s-\tik) g_n(t-\tik)\\
&\times  (b^{[1]}_\tik)^2\psi_n
\big\{l^2(D_s^{(1)}C)(D_t^{(1)}C)+lD_s^{(1)}D_t^{(1)}C\big\}\bigg] \to
\\ 
&\frac{\theta\psi_3^2}{\psi_2}
\int_0^1 \E\bigg[
\tti u \>\exp \big(\tti u M_t+C\frac{u^2}{2}\big)\Psi(u,v)   (b^{[1]}_t)^2
\big\{l^2(D_t^{(1)}C)^2+lD_t^{(1)}D_t^{(1)}C\big\}\bigg]\>dt
\\&=
\E\bigg[\Psi(u,v) \>
\frac{\theta\psi_3^2}{\psi_2}\int_0^1 \tti u \> (b^{[1]}_t)^2
\big\{l^2(D_t^{(1)}C)^2+lD_t^{(1)}D_t^{(1)}C\big\}\>dt\bigg]
\\&=
\E\bigg[\Psi(u,v) \>
\frac{\theta\psi_3^2}{\psi_2} \tti u \>\big\{4\theta^2
l^2\calc_2+2\theta l (\calc_3+\calc_4)\big\}\bigg]
\end{align*}
where $D_t^{(1)}D_t^{(1)}C=\lim_{s\uparrow t}D_s^{(1)}D_t^{(1)}C$ and 
\beas
\mathcal{C}_2
&=& 
\int_0^1 (b^{[1]})^2(X_t) \left( \int_t^1 c'(X_r) D^{(1)}_t X_r dr  \right)^2 dt, \\
\mathcal{C}_3
&=&  
\int_0^1 (b^{[1]})^2(X_t) \left( \int_t^1 c''(X_r) (D^{(1)}_t X_r)^2 dr  \right) dt, \\
\mathcal{C}_4
&=&  
\int_0^1 (b^{[1]})^2(X_t) \left( \int_t^1 c'(X_r) D^{(1)}_t D^{(1)}_t X_r dr  \right) dt
\eeas
for $c(x)=\left[(b^{[1]})^2(x)+ \frac{\omega^2 \psi_1}{\theta^2 \psi_2} \right]^2$. 
%}}
%
\begin{en-text}
\beas
\check {U}_n(u,v)
 & \to &
 %\frac{\theta \psi_3 }{\psi_2} \int_0^1  \E \left[ \im u (b_{t}^{[1]})^2 e_t(u) D_{t,t} \Psi(u,v)   \right] dt 
% \\
\frac{\theta \psi_3 }{\psi_2} \E \left[ \im u  (4 \theta^2 l^2 \mathcal{C}_2+ 2\theta l (\mathcal{C}_3+\mathcal{C}_4)) \Psi(u,v)   \right] 
 \\&&
\eeas
\end{en-text}
\begin{en-text}
Let us first look at the term $\check {U}_n(u,v).$ We observe that 
\begin{align*} \int_{t_{i k_n-1}}^{ t_{(i+1) k_n-1}} d(M_t^n)=&
\frac{ \Delta_n^{-1/4}}{\psi_2^n} \Big[ (b_{t_{i k_n}}^{[1]})^2 \delta^2 (g_n(\cdot-t_{i k_n})^{\otimes 2}) \\
&+\omega^2 \Delta_n^{-1} \delta^2 (h_n(\cdot-t_{i k_n})^{\otimes 2})+ 2 b_{t_{i k_n}}^{[1]} \widebar W_{t_{i k_n}} \widebar \eps_{t_{i k_n}}  \Big] ,\end{align*}
where $\delta^2$
denotes the double stochastic integral. We note that $\widebar \eps_{t_{i k_n}}$ is independent of $\mathcal{F}_t$ and has mean 0. Applying the integration by parts formula, we obtain
\begin{align*}
\check {U}_n(u,v)=& \frac{\im u \Delta_n^{-1/2}}{\psi_2^n} \sum_{i=0}^{d_n-1} \Bigg\{ \E \left[ (b_{t_{i k_n}}^{[1]})^2 \delta^2(g_n(\cdot-t_{i k_n})^{\otimes 2}) \Psi(u,v)  e_{t_{i k_n-1}}^n(u) \psi_n \right] \\
 &+ \E \left[ \omega^2 \Delta_n^{-1} \delta^2(h_n(\cdot-t_{i k_n})^{\otimes 2}) \Psi(u,v)  e_{t_{i k_n-1}}^n(u) \psi_n \right]
 \Bigg \} \\
 & \to \frac{\theta \psi_3 }{\psi_2} \int_0^1  \E \left[ \im u (b_{t}^{[1]})^2 e_t(u) D_{t,t} \Psi(u,v)   \right] dt \\
 &= \frac{\theta \psi_3 }{\psi_2} \E \left[ \im u  (4 \theta^2 l^2 \mathcal{C}_2+ 2\theta l (\mathcal{C}_3+\mathcal{C}_4)) \Psi(u,v)   \right]
\end{align*}
\end{en-text}
Convergence 
$\hat{U}_n(u,v)\to0$ is easy to show. 
Moreover, it is possible to specify the limit $\Phi^\alpha$ in a similar way 
to verify (\ref{characterize.upperbar.general}) for $F=C$. Hence, we obtain 
$$\overline \sigma (\im u, \im v)=\frac{\theta \psi_3^{2}
 }{\psi_2} \im u[4 \theta^2 \mathcal{C}_2 l^2+ 2\theta (\mathcal{C}_3+\mathcal{C}_4)l ].
$$
Thus, Proposition \ref{UpperSigmaLemma} and hence 
Condition [B4]$_{\ell,2,1}$ has been verified, which concludes the proof of Theorem \ref{MainTh}. 
\qed

\bibliographystyle{plain} %{alpha}
\bibliography{Bezirgen_01}

\end{document}